\documentclass{article}

\usepackage{arxiv}

\usepackage[utf8]{inputenc} 
\usepackage[T1]{fontenc}    
\usepackage{hyperref}       
\usepackage{url}            
\usepackage{booktabs}       
\usepackage{amsfonts}       
\usepackage{nicefrac}       
\usepackage{microtype}      
\usepackage{lipsum}		
\usepackage{graphicx}
\usepackage[numbers]{natbib}
\usepackage{doi}
\usepackage{amsmath}
\usepackage{amsthm}
\usepackage{mathrsfs}

\usepackage{color} 
\usepackage{enumerate} 
\usepackage{amsmath,amssymb}
\newcommand{\ud}{\,\mathrm{d}}
\newtheorem{theorem}{Theorem}[section] 

\newtheorem{lemma}[theorem]{Lemma}

\newtheorem{assumption}[theorem]{Assumption}
\newtheorem{remark}[theorem]{Remark}

\title{Thermal Relaxation Effects on Stability Thresholds: A Comparative Analysis of Thermoelastic Timoshenko-Boltzmann Systems Under Fourier and Cattaneo Laws}

\author{ {\hspace{1mm}Jun Zhou}\thanks{Corresponding author, Professor, Research Direction: Partial Differential Equations.} \\
	School of Mathematics and Statistics\\
	  Southwest University\\
	Chongqing 400715, People's Republic of China \\
	\texttt{jzhou@swu.edu.cn} \\
	\And
	{\hspace{1mm}Siruo Mou\thanks{Master's student, Research Direction: Partial Differential Equations.}} \\
	School of Mathematics and Statistics\\
	  Southwest University\\
	Chongqing 400715, People's Republic of China \\
	\texttt{m\_usssi011677082@163.com} \\
}

\renewcommand{\headeright}{}
\renewcommand{\undertitle}{}
\renewcommand{\shorttitle}{}

\hypersetup{
pdftitle={Thermal Relaxation Effects on Stability Thresholds: A Comparative Analysis of Thermoelastic Timoshenko-Boltzmann Systems Under Fourier and Cattaneo Laws},
pdfsubject={PDEs, PDEs},
pdfauthor={Jun~Zhou, Siruo~Mou},
pdfkeywords={Thermoelastic Timoshenko-Boltzmann system, Fourier's law, Cattaneo's law, Stability},
}

\begin{document}
\maketitle

\begin{abstract}
This paper explores stability dichotomies of thermoelastic Timoshenko-Boltzmann systems with hereditary memory, comparing Fourier's parabolic (no thermal relaxation) and Cattaneo's hyperbolic (with thermal relaxation) heat conduction via semigroup theory and spectral analysis. The Fourier-semigroup achieves exponential stability through the $\delta$-condition plus a specific parameter condition, while sole $\delta$-condition ensures $1/2$-order polynomial stability. The Cattaneo-semigroup retains this polynomial stability but requires a thermal relaxation-included modified parameter condition for exponential stability. As thermal relaxation time approaches zero, Cattaneo's modified parameter converges to Fourier's, formalizing their asymptotic connection. Results show thermal relaxation alters exponential criteria but preserves polynomial decay, supporting stability analysis under time-dependent thermal loading in structural mechanics.
\end{abstract}

\keywords{Thermoelastic Timoshenko-Boltzmann system\and Fourier's law\and Cattaneo's law\and Stability}

\section{Introduction}\label{sec1}
Our study focuses on a thermoelastic Timoshenko-Boltzmann system governed by either Fourier or Cattaneo heat conduction laws, defined as follows:
\begin{equation}\label{main eq}
	\left\{
	\begin{split}
		&\rho_1 \phi_{tt} - \kappa (\phi_{x} + \psi)_{x} + \kappa \smallint_{-\infty}^{t} g(t-s) (\phi_{x} + \psi)_{x}(s) ds + \sigma \theta_x = 0, \quad &&(x,t)\in(0,L)\times \mathbb{R}^{+},\\
		&\rho_2 \psi_{tt} - b \psi_{xx} + \kappa (\phi_{x} + \psi) - \kappa \smallint_{-\infty}^{t} g(t-s) (\phi_{x} + \psi)(s) ds - \sigma \theta = 0, \quad &&(x,t)\in(0,L)\times \mathbb{R}^{+},\\
		&\rho_3 \theta_{t} + q_{x} + \sigma (\phi_{x} + \psi)_{t} = 0, \quad &&(x,t)\in(0,L)\times \mathbb{R}^{+},\\
		&\tau q_{t} + \beta q + \theta_{x} = 0, \quad &&(x,t)\in(0,L)\times \mathbb{R}^{+},
	\end{split}
	\right.
\end{equation}
where $\phi(x,t)$: Transverse displacement of the beam; $\psi(x,t)$: Rotation angle of the cross-section; $\theta(x,t)$: Temperature variation from ambient equilibrium; $q(x,t)$: Heat flux vector. Physically, Equation \eqref{main eq}$_1$ describes shear deformation (derived from D'Alembert's principle), while Equation \eqref{main eq}$_2$ accounts for rotational inertia and bending moments. The key physical parameters are:
$\rho_1$ (mass density per unit length), $\kappa$ (shear stiffness), $g(t)$ (Boltzmann relaxation function), $\sigma$ (thermoelastic coupling coefficient);
$\rho_2$ (rotational inertia density), $b$ (bending stiffness);
$\tau$ (thermal relaxation time, with $\tau=0$ recovering Fourier's law), $\beta$ (thermal resistivity);
$\rho_3$ (thermal capacity, appearing in the first law of thermodynamics for Equation \eqref{main eq}$_3$).

\noindent\textbf{Classical Timoshenko Beam Models and Stability Foundations}
The classical Timoshenko beam equations (recovered by omitting memory terms from viscoelastic extensions) have been extensively studied, with well-established stabilization strategies including boundary/internal feedback control, thermoelastic coupling, viscoelastic memory dissipation, and Kelvin-Voigt damping \cite{MR2562283,MR2483034,MR3784885,MR2259051,MR2215885}. Two foundational stability results are universally recognized \cite{MR1680836}:
\begin{enumerate}
	\item \textit{Full damping}: Linear damping in both equations yields \textit{unconditional exponential stability}, independent of wave speed matching;
	\item \textit{Partial damping}: Damping in only one equation ensures exponential stability \textit{if and only if} the equal wave speed (EWS) condition holds:
	\begin{equation}\label{ews}
		\frac{\kappa}{\rho_1}=\frac{b}{\rho_2}.
	\end{equation}
	This condition synchronizes the phase of shear and bending wave modes.
\end{enumerate}

\noindent\textbf{Viscoelastic Extensions with Memory Effects}
Fabrizio et al. \cite{MR2679371} pioneered the integration of Boltzmann's hereditary memory principles with Volterra's integral formalism to construct PDEs for materials whose mechanical responses depend on historical deformation states. Building on this framework, Gomes Tavares et al. \cite{MR4704637} derived two viscoelastic Timoshenko beam models, distinguished by the localization of viscoelastic coupling (bending vs. shear deformation):

\textit{Model A (Bending Viscoelasticity)}:
\begin{equation}\label{model1}
	\begin{cases}
		\rho_1 \phi_{tt} - \kappa (\phi_x + \psi)_x = 0, \\
		\rho_2 \psi_{tt} - b \psi_{xx} + b \displaystyle\smallint_\alpha^{t} g(t-s) \psi_{xx}(\cdot, s) \ud s + \kappa (\phi_x + \psi) = 0.
	\end{cases}
\end{equation}

\textit{Model B (Shear Viscoelasticity)}:
\begin{equation}\label{model2}
	\begin{cases}
		\rho_1 \phi_{tt} - \kappa (\phi_x + \psi)_x + \kappa \smallint_{\alpha}^{t} g(t-s) (\phi_x + \psi)_x(\cdot, s) \ud s = 0, \\
		\rho_2 \psi_{tt} - b \psi_{xx} + \kappa (\phi_x + \psi) - \kappa \smallint_{\alpha}^{t} g(t-s) (\phi_x + \psi)(\cdot, s) \ud s = 0,
	\end{cases}
\end{equation}
where $x \in [0, L]$ (spatial coordinate along the beam axis), $t \geq \alpha$ (temporal variable), and key derived parameters include: $\rho_1 = \rho A$ (transverse inertia, $\rho$ = material density, $A$ = cross-sectional area); $b = EI$ (elastic bending rigidity, $E$ = Young's modulus, $I$ = second moment of inertia); $\rho_2 = \rho I$ (rotational inertia density).

The memory kernel $g(\cdot)$ is typically a positive-definite, monotonically decaying function (e.g., exponential or fractional-order functions) that quantifies the fading influence of past strain states. Two limiting cases for the temporal origin $\alpha$ dominate linear viscoelasticity literature:
\begin{enumerate}
	\item \textit{Boltzmann's case ($\alpha \to -\infty$)}: Infinite memory horizon, where the material's response at time $t$ depends on all prior strain states;
	\item \textit{Volterra's case ($\alpha = 0$)}: Finite memory, activated at the start of observation ($t=0$), with only post-initial strain states influencing the response.
\end{enumerate}

\noindent\textit{Volterra-Type Memory ($\alpha=0$)}
For Model A with Volterra memory (\eqref{model1} with $\alpha=0$):
\begin{equation}\label{model1_3}
	\begin{cases}
		\rho_1 \phi_{tt} - \kappa (\phi_x + \psi)_x = 0, \\
		\rho_2 \psi_{tt} - b \psi_{xx} + b \displaystyle\smallint_0^t g(t-s) \psi_{xx}(s) \ud s + \kappa (\phi_x + \psi) = 0,
	\end{cases}
\end{equation}
Ammar-Khodja et al. \cite{MR2001030} established a fundamental dichotomy in energy decay:
\begin{itemize}
	\item \textit{EWS satisfied}: Exponential kernels yield uniform exponential decay; polynomial kernels induce polynomial decay;
	\item \textit{EWS violated}: Exponential kernels fail to produce uniform dissipation for weak initial data.
\end{itemize}

For Model B with Volterra memory (\eqref{model2} with $\alpha=0$):
\begin{equation}\label{model2_1}
	\begin{cases}
		\rho_1 \phi_{tt} - \kappa (\phi_x + \psi)_x + \kappa \displaystyle\smallint_0^{t} g(t-s) (\phi_x + \psi)_x(\cdot, s) \ud s = 0, \\
		\rho_2 \psi_{tt} - b \psi_{xx} + \kappa (\phi_x + \psi) - \kappa \displaystyle\smallint_0^{t} g(t-s) (\phi_x + \psi)(\cdot, s) \ud s = 0,
	\end{cases}
\end{equation}
Alves et al. \cite{MR4029814} derived stability results contingent on three kernel/parameter requirements:
\begin{enumerate}
	\item $g$ is nonincreasing, differentiable, $g(0) > 0$, $\smallint_0^\infty g(s)\ud s < \infty$, and $g'(s) + \delta g(s) \leq 0$ ($\delta > 0$);
	\item Optional enhancement via the EWS condition \eqref{ews};
	\item A geometric constraint linking the kernel to physical parameters:
	\begin{equation}\label{ass1.12}
		\smallint_0^\infty g(s)\ud s > \max\left\{\frac{31}{32},\frac{64\rho_1L^2}{64\rho_1L^2+\rho_2}\right\} \in (0,1).
	\end{equation}
\end{enumerate}
Uniform decay prevails when \eqref{ass1.12} holds, while nonuniform stability emerges otherwise.

\noindent\textit{Boltzmann-Type Memory ($\alpha\to-\infty$)}
For Model A with Boltzmann memory (\eqref{model1} with $\alpha\to-\infty$):
\begin{equation}\label{model1_1}
	\begin{cases}
		\rho_1 \phi_{tt} - \kappa (\phi_x + \psi)_x = 0, \\
		\rho_2 \psi_{tt} - b \psi_{xx} + b \displaystyle\smallint_{0}^\infty g(s) \psi_{xx}(t-s) \ud s + \kappa (\phi_x + \psi) = 0,
	\end{cases}
\end{equation}
Rivera et al. \cite{MR2370668} showed exponential stability holds \textit{if and only if} the EWS condition \eqref{ews} is satisfied (with hereditary memory providing sufficient damping); violation of \eqref{ews} restricts decay to polynomial rates dependent on initial data regularity. Conti et al. \cite{MR3152192} refined this result, proving exponential stability (under EWS) is equivalent to the kernel satisfying Chepyzhov and Pata's $\delta$-condition \cite{MR2215885}:
\begin{equation}\label{delta c}
	g(t+s) \leq Ce^{-\delta t}g(s) \quad \forall t > 0, \ \text{a.e.}\ s > 0,
\end{equation}
where $C \geq 1$ and $\delta > 0$ are constants enforcing controlled exponential decay of the kernel.

For Model B with Boltzmann memory (\eqref{model2} with $\alpha\to-\infty$):
\begin{equation}\label{model2_4}
	\begin{cases}
		\rho_1 \phi_{tt} - \kappa(\phi_{x}+\psi)_{x} + \kappa \displaystyle\smallint_{-\infty}^{t}g(t-s)(\phi_{x}+\psi)_{x}(s)\ud s = 0, \\
		\rho_2\psi_{tt} - b\psi_{xx} + \kappa(\phi_{x}+\psi) - \kappa \displaystyle\smallint_{-\infty}^{t}g(t-s)(\phi_{x}+\psi)(s)\ud s = 0,
	\end{cases}
\end{equation}
Tavares et al. \cite{MR4704637} characterized two stability regimes:
\begin{enumerate}
	\item Uniform exponential stability (EWS satisfied + $\delta$-condition + $\lim_{s\to0}g(s)=g_0<\infty$);
	\item Intrinsic semiuniform stability (optimal decay rate $\sqrt{t}$) independent of EWS, under $\lim_{s\to0}g(s)=g_0<\infty$.
\end{enumerate}

\noindent\textbf{Thermoelastic Extensions (Fourier vs. Cattaneo Heat Conduction)}
Classical Fourier heat conduction ($q = -\beta\theta_x$) predicts unphysical infinite thermal propagation speeds, motivating the adoption of Cattaneo's modified law:
\begin{equation}\label{cattaneo}
	\tau q_t + q = -\beta \theta_x,
\end{equation}
where $\tau > 0$ introduces physically realistic phase-lag effects.

\textit{Fourier Heat Conduction} Apalara \cite{MR3764551} analyzed the shear force-damped thermoelastic system:
\begin{equation}\label{model1_4}
	\begin{cases}
		\rho_1 \phi_{tt} - \kappa(\phi_x + \psi)_x + \sigma \theta_x = 0, \\
		\rho_2 \psi_{tt} - b \psi_{xx} + \kappa (\phi_x + \psi) + \displaystyle\smallint_0^t g(t-s) \psi_{xx}(s) \ud s - \sigma \theta = 0, \\
		\rho_3 \theta_t - \beta \theta_{xx} + \sigma (\phi_x + \psi)_{t} = 0,
	\end{cases}
\end{equation}
under Neumann-Dirichlet-Dirichlet boundary conditions, extending earlier work by Messaoudi and Fareh \cite{MR2833680,MR3003741} (fully Dirichlet conditions) to obtain wave speed-independent stability results.

For Model B with bending moment thermal damping:
\begin{equation}\label{model2_2}
	\begin{cases}
		\rho_1 \phi_{tt} - \kappa (\phi_x + \psi)_x + \kappa \displaystyle\smallint_0^{t} g(t-s) (\phi_x + \psi)_x(\cdot, s) \ud s = 0, \\
		\rho_2 \psi_{tt} - b \psi_{xx} + \kappa (\phi_x + \psi) - \kappa \displaystyle\smallint_0^{t} g(t-s) (\phi_x + \psi)(\cdot, s) \ud s + \sigma \theta_x = 0, \\
		\rho_3 \theta_t - \beta \theta_{xx} + \sigma \psi_{xt} = 0,
	\end{cases}
\end{equation}
\cite{MR4598848} demonstrated universal polynomial/exponential decay \textit{without} requiring the EWS condition. In contrast, the shear force thermal damping variant:
\begin{equation}\label{model2_3}
	\begin{cases}
		\rho_1 \phi_{tt} - \kappa (\phi_x + \psi)_x + \kappa \displaystyle\smallint_0^{t} g(t-s) (\phi_x + \psi)_x(\cdot, s) \ud s + \sigma \theta_x = 0, \\
		\rho_2 \psi_{tt} - b \psi_{xx} + \kappa (\phi_x + \psi) - \kappa \displaystyle\smallint_0^{t} g(t-s) (\phi_x + \psi)(\cdot, s) \ud s - \sigma\theta = 0, \\
		\rho_3 \theta_t - \beta \theta_{xx} + \sigma (\phi_x + \psi)_{t} = 0,
	\end{cases}
\end{equation}
requires the EWS condition for optimal decay rates \cite{MR4270141}, building on Alves et al. \cite{MR4029814}.

\textit{Cattaneo Heat Conduction}
Fern¨¢ndez Sare and Racke \cite{MR2533927} incorporated Cattaneo damping into bending moment dynamics:
\begin{equation}\label{model1_2}
	\begin{cases}
		\rho_1 \phi_{tt} - \kappa (\phi_x + \psi)_x = 0, \\
		\rho_2 \psi_{tt} - b \psi_{xx} + \kappa (\phi_x + \psi) + \displaystyle \smallint_0^{\infty} g(s) \psi_{xx}(t-s) \ud s + \sigma \theta_x = 0, \\
		\rho_3 \theta_t + q_x + \sigma \psi_{xt} = 0, \\
		\tau q_t + q + \theta_x = 0,
	\end{cases}
\end{equation}
finding that exponential stability is lost even when EWS holds. Fatori et al. \cite{MR3151901} completed this characterization: exponential stability holds \textit{if and only if}
\begin{equation}\label{chi0}
	\tilde{\chi}_0 := \left(\tau - \frac{\rho_1}{\rho_3 \kappa}\right)\left(\rho_2 - \frac{b\rho_1}{\kappa}\right) - \frac{\tau\rho_1\sigma^2}{\rho_3 \kappa} = 0,
\end{equation}
with optimal polynomial decay ($t^{-1/2}$) for $\tilde{\chi}_0 \neq 0$.

\noindent\textbf{Motivation and Objectives of This Work}
Existing literature has established critical stability results for Timoshenko systems with memory/thermal effects, but gaps remain in the unified analysis of systems combining Boltzmann memory, thermoelastic coupling, and Cattaneo heat conduction (as in \eqref{main eq}). This paper addresses these gaps with three primary objectives:
\begin{enumerate}
	\item Prove well-posedness of system \eqref{main eq} via energy space constructions and semigroup theory;
	\item Establish exponential stability criteria through spectral analysis of the governing semigroup operator;
	\item Conduct a comparative analysis of stabilization mechanisms between Fourier and Cattaneo heat conduction models.
\end{enumerate}
\section{Abstract Models and Main Results}
To formulate the history-dependent dynamics of \eqref{main eq}, as in \cite{MR4704637}, we redefine the relative displacement history variable through the constitutive relation
$$
\eta^t(x,s) = (\phi + \tilde{\psi})(x,t) - (\phi + \tilde{\psi})(x,t-s),
$$
valid for spatial coordinates $x \in (0,L)$, temporal coordinates $t \geq 0$, and history variables $s > 0$. Here, the simplified notation \[\tilde{\psi}(x,t) = \smallint_0^x\psi(y,t)dy,\] captures depth-integrated displacement components. Introducing the normalized elasticity coefficient \[\omega := 1 - \ell>0,\] the complete coupled thermo-viscoelastic system consists of the following governing equations:
\begin{equation}\label{funew}
	\left\{
	\begin{split}
		&\rho_1 \phi_{tt} - \kappa\left[\omega(\phi_x + \psi) + \smallint_0^\infty g(s)\eta_x(s)ds\right]_x + \sigma\theta_x = 0, &(x,t)\in(0,L)\times \mathbb{R}^+,\\
		&\rho_2\psi_{tt} \!- \!b\psi_{xx} \!+\! \kappa\left[\omega(\phi_x + \psi) \!+\! \smallint_0^\infty g(s)\eta_x(s)ds\right]\! -\! \sigma\theta = 0, &(x,t)\in(0,L)\times \mathbb{R}^+,\\
		&\rho_3\theta_t + q_x + \sigma(\phi_x + \psi)_t = 0, &(x,t)\in(0,L)\times \mathbb{R}^+,\\
		&\tau q_t + \beta q + \theta_x = 0, &(x,t)\in(0,L)\times \mathbb{R}^+,\\
		&\eta_t + \eta_s - (\phi + \tilde{\psi})_t = 0,  &(x,s,t)\in(0,L)\times \mathbb{R}^+\times \mathbb{R}^+.
	\end{split}
	\right.
\end{equation}

The analytical framework employs the standard $L^2(0,L)$ space with inner product and norm
$$(u,v) = \smallint_0^L u(x)\overline{v(x)}dx,\quad \|u\| = \left(\smallint_0^L |u(x)|^2dx\right)^{\frac12}.$$
Let
\begin{align*}
	&L_*^2(0,L) := \left\{h \in L^2(0,L) : \smallint_0^L h(x)dx = 0\right\},\\
	&H_*^1(0,L) := H^1(0,L) \cap L_*^2(0,L),\\
	&L_g^2(\mathbb{R}^+,H_0^1(0,L)) := \left\{h : \smallint_0^\infty g(s)\|h_x(s)\|^2 ds < \infty\right\}.
\end{align*}
It is obvious that these are Hilbert spaces with inner product  and norms
\begin{align*}
	&(u,v)_{L_*^2(0,L)}=(u,v),&&\|u\|_{L_*^2(0,L)}=\|u\|;\\
	&(u,v)_{H_*^1(0,L)}=(u_x,v_x),&&\|u\|_{H_*^1(0,L)}=\|u_x\|;\\
	&(u,v)_{L_g^2(\mathbb{R}^+,H_0^1(0,L))}=\smallint_0^Lg(s)(u_x(s),v_x(s))ds,&&\|u\|_{L_g^2(\mathbb{R}^+,H_0^1(0,L))}=\left[\smallint_0^\infty g(s)\|u_x(s)\|^2ds\right]^{\frac12}.
\end{align*}
For notational simplicity, we adopt the abbreviations
$L^2$, $L_*^2$, $H_0^1$, $H^2$, $H_*^1$ and $L_g^2$ to respectively denote the function spaces
$L^2(0,L)$, $L_*^2(0,L)$, $H_0^1(0,L)$, $H^2(0,L)$, $H_*^1(0,L)$ and $L_g^2(\mathbb{R}^+,H_0^1(0,L))$.

We also consider the operator \(\mathbb{L}: D(\mathbb{L}) \subset L_g^2 \to L_g^2\) given by
\[\begin{cases}
	D(\mathbb{L}):=\left\{\eta:\eta\in L_g^2, \, \mathbb{L}\eta \in L_g^2 \, \hbox{ and } \, \eta(\cdot, 0)=0\right\},\\
	\mathbb{L}\eta:=-\partial_s \eta,
\end{cases}\]
which is the infinitesimal generator of the right-translation $C_0$-semigroup \(R(t):L_g^2 \to L_g^2\) given by

\[[R(t)\eta](\cdot, s) :=
\begin{cases}
	\eta(\cdot, s-t), & s > t, \\
	0, & 0 \le s \leq t.
\end{cases}\]
Consider the  Cauchy problem for the history variable
\begin{equation}
	\begin{cases}\label{eta01}
		\eta_t = \mathbb{L}\eta + (\phi + \tilde{\psi})_t, \quad t > 0, \\
		\eta^0(\cdot, s) = \eta_0(\cdot,s).
	\end{cases}
\end{equation}
By the theory of $C_0$-semigroup,	for any initial data $\eta_0 \in L_g^2$, equation \eqref{eta01} admits a unique mild solution $\eta \in C([0,\infty); L_g^2)$ with explicit representation
\begin{align}\label{eta0}
	\eta^t(\cdot, s) =&R(t)\eta_0(\cdot,s)+\smallint_0^tR(t-\tau)(\phi + \tilde{\psi})_t(\cdot,t-\tau)d\tau\notag\\=&
	\begin{cases}
		\eta_0(\cdot,s-t) + (\phi + \tilde{\psi})(\cdot,t) - (\phi_0 + \tilde{\psi}_0)(\cdot), & s > t, \\
		(\phi + \tilde{\psi})(\cdot,t) - (\phi + \tilde{\psi})(\cdot,t-s), & 0 \le s \leq t.
	\end{cases}
\end{align}

The system \eqref{funew} under the Fourier heat conduction law ($\tau=0$), yielding the simplified governing equations
\begin{equation}\label{problem1}
	\left\{
	\begin{split}
		&\rho_1 \phi_{tt} - \kappa\left[\omega(\phi_x + \psi) + \smallint_0^\infty g(s)\eta_x(s)ds\right]_x + \sigma\theta_x = 0, \quad &&(x,t)\in(0,L)\times \mathbb{R}^+,\\
		&\rho_2\psi_{tt} \!-\! b\psi_{xx} \!+\! \kappa\left[\omega(\phi_x + \psi) \!+\! \smallint_0^\infty g(s)\eta_x(s)ds\right] \!-\! \sigma\theta = 0, \quad &&(x,t)\in(0,L)\times \mathbb{R}^+,\\
		&\rho_3\theta_t - \beta \theta_{xx} + \sigma(\phi_x + \psi)_t = 0, \quad &&(x,t)\in(0,L)\times \mathbb{R}^+,\\
		&\eta_t + \eta_s - (\phi + \tilde{\psi})_t = 0, \quad &&(x,s,t)\in(0,L)\times \mathbb{R}^+\times \mathbb{R}^+
	\end{split}
	\right.
\end{equation}
subject to boundary constraints
\begin{equation}\label{bdF}
	\left\{
	\begin{split}
		&\phi(0,t) = \phi(L,t) = \psi_x(0,t) = \psi_x(L,t) = \theta_x(0,t) = \theta_x(L,t) = 0, \quad &&t \geq 0,\\
		&\eta^t(0,s) = \eta^t(L,s) = 0, \, &&t,s \ge 0,\\
		&\eta^t(x,0) = 0, \quad &&(x,t) \in (0,L)\times [0,\infty)
	\end{split}
	\right.
\end{equation}
and initial conditions
\begin{equation}\label{icF}
	\left\{
	\begin{split}
		&\phi(x,0) = \phi_0(x), \quad \phi_t(x,0) = \phi_1(x), &&x \in (0,L),\\
		&\psi(x,0) = \psi_0(x), \quad \psi_t(x,0) = \psi_1(x), &&x \in (0,L),\\
		&\theta(x,0) = \theta_0(x), &&x \in (0,L),\\
		&\eta^0(x,s) = \phi_0 + \tilde{\psi}_0-\phi(x,-s)-\tilde{\psi}(x,-s):= \eta_0(x,s), &&(x,s) \in (0,L)\times \mathbb{R}^+.
	\end{split}
	\right.
\end{equation}

To deal \eqref{problem1}-\eqref{icF} by the theory of $C_0$-semigroup, we introduce the phase space as the Hilbert space
$$
\mathcal{H}_{F} = H_0^1 \times L^2 \times H_*^1 \times L_*^2 \times L_*^2\times L_g^2,
$$
with inner product
\begin{align*}
	(z,\tilde{z})_{\mathcal{H}_{F}}=\rho_1(\Phi,\tilde{\Phi}) + \rho_2(\Psi,\tilde{\Psi})+ \kappa\omega(\phi_x + \psi,\tilde{\phi}_x + \tilde{\psi}) + b(\psi_x,\tilde{\psi}_x)+ \rho_3(\theta,\tilde{\theta})+ \kappa(\eta,\tilde{\eta})_{L_g^2}
\end{align*}
and norm
$$
\|z\|_{\mathcal{H}_F}^2 = \rho_1\|\Phi\|^2 + \rho_2\|\Psi\|^2 + \kappa\omega\|\phi_x + \psi\|^2 + b\|\psi_x\|^2 + \rho_3\|\theta\|^2 + \kappa\|\eta\|_{L_g^2}^2,
$$
where $z = (\phi,\Phi,\psi,\Psi,\theta,\eta),~\tilde{z} = (\tilde{\phi},\tilde{\Phi},\tilde{\psi},\tilde{\Psi},\tilde{\theta},\tilde{\eta})\in \mathcal{H}_F$.

Then we can reformulate problem \eqref{problem1}-\eqref{icF} as an abstract Cauchy problem in $\mathcal{H}_F$
\begin{equation}\label{cauchyfc}
	\left\{
	\begin{split}
		&z_t = \mathcal{A}_F z, \quad t > 0,\\
		&z(0) = (\phi_0,\phi_1,\psi_0,\psi_1,\theta_0,\eta_0) := z_0,
	\end{split}
	\right.
\end{equation}
where $z = (\phi,\Phi,\psi,\Psi,\theta,\eta)$ with $\Phi=\phi_t$ and $\Psi=\psi_t$, and the operator $\mathcal{A}_F: D(\mathcal{A}_F) \subset \mathcal{H}_F \to \mathcal{H}_F$ is defined by
\begin{equation}\label{9}
	\mathcal{A}_F z := \begin{pmatrix}
		\Phi \\
		\frac{\kappa}{\rho_1}\left[\omega(\phi_x + \psi) + \smallint_0^\infty g(s)\eta_x(s)ds\right]_x - \frac{\sigma}{\rho_1}\theta_x \\
		\Psi \\
		\frac{b}{\rho_2}\psi_{xx} - \frac{\kappa}{\rho_2}\left[\omega(\phi_x + \psi) + \smallint_0^\infty g(s)\eta_x(s)ds\right] + \frac{\sigma}{\rho_2}\theta \\
		\frac{\beta}{\rho_3}\theta_{xx} - \frac{\sigma}{\rho_3}(\Phi_x + \Psi) \\
		\mathbb{L}\eta + \Phi + \tilde{\Psi}
	\end{pmatrix},
\end{equation}
where the domain $D(\mathcal{A}_F)$ comprises functions $z = (\phi,\Phi,\psi,\Psi,\theta,\eta)\in \mathcal{H}_F$ satisfying
\begin{align*}
	&\phi \in H_0^1, \, \Phi \in H_0^1, \, \psi \in H^2 \cap H_*^1, \, \Psi \in H_*^1, \,\theta \in H^2 \cap H_*^1, \, \eta \in D(\mathbb{L}),
\end{align*}
with additional regularity
$$\omega\phi + \smallint_0^\infty g(s)\eta(s)ds \in H^2,\,\psi_{xx} \in  L_*^2, \,  \theta_{xx} \in L_*^2.$$

The system \eqref{funew} under the Cattaneo law  ($\tau>0$) yields the equations
\begin{equation}\label{Cattaneo1}
	\left\{\begin{split}
		&\rho_1 \phi_{tt}-\kappa\left[\omega(\phi_{x}+\psi)+\smallint_0^{\infty}g(s)\eta_{x}(s)ds\right]_{x}+\sigma\theta_{x}=0,&&(x,t)\in(0,L)\times \mathbb{R}^{+},\\
		&\rho_2\psi_{tt}-b\psi_{xx}+\kappa\left[\omega(\phi_{x}+\psi)+ \smallint_0^{\infty}g(s)\eta_{x}(s)ds\right]-\sigma\theta=0,&&(x,t)\in(0,L)\times \mathbb{R}^{+}\\
		&\rho_3\theta_{t}+q_{x}+\sigma(\phi_{x}+\psi)_{t}=0,&&(x,t)\in(0,L)\times \mathbb{R}^{+}\\
		&\tau q_{t}+\beta q+\theta_{x}=0,&&(x,t)\in(0,L)\times \mathbb{R}^{+}\\
		&\eta_{t}+\eta_{s}-(\phi+\tilde{\psi})_{t}=0,&&(x,s,t)\in(0,L)\times \mathbb{R}^{+}\times \mathbb{R}^{+}
	\end{split}
	\right.
\end{equation}
subject to boundary constraints
\begin{equation}\label{Cattaneo2}
	\left\{\begin{split}
		&\phi(0,t)=\phi(L,t)=\psi_{x}(0,t)=\psi_{x}(L,t)=0,&&t\geq0,\\
		&\theta_x(0,t)=\theta_x(L,t)=q(0,t)=q(L,t)=0,&&t\geq0,\\
		&\eta^{t}(0,s)=\eta^{t}(L,s)=0,&&t,s\geq0,\\
		&\eta^{t}(x,0)=0,&&(x,t)\in(0,L)\times [0,\infty)
	\end{split}
	\right.
\end{equation}
and initial conditions
\begin{equation}\label{Cattaneo3}
	\left\{\begin{split}
		&\phi(x,0)=\phi_0(x),\phi_{t}(x,0)=\phi_1(x),&&x\in(0,L),\\
		&\psi(x,0)=\psi_0(x),\psi_{t}(x,0)=\psi_1(x),&&x\in(0,L)\\
		&\theta(x,0)=\theta_0(x),&&x\in(0,L)\\
		&q(x,0)=q_0(x),&&x\in(0,L)\\
		&\eta^0(x,s)=\phi_0 + \tilde{\psi}_0-\phi(x,-s)-\tilde{\psi}(x,-s):= \eta_0(x,s),&&(x,s)\in(0,L)\times \mathbb{R}^{+}
	\end{split}
	\right.
\end{equation}

To deal \eqref{Cattaneo1}--\eqref{Cattaneo3} by the theory of $C_0$-semigroup, we introduce the phase space as the Hilbert space
$$\mathcal{H}_C:=H_0^1\times L^2\times H_*^1\times L_*^2\times L_*^2\times L^2\times L_{g}^2$$
equipped with inner product
\begin{align*}
	(z,\tilde{z})_{\mathcal{H}_{C}}=\rho_1(\Phi,\tilde{\Phi}) + \rho_2(\Psi,\tilde{\Psi})+ \kappa\omega(\phi_x + \psi,\tilde{\phi}_x + \tilde{\psi}) + b(\psi_x,\tilde{\psi}_x)+ \rho_3(\theta,\tilde{\theta})+\tau(q,\tilde{q})+ \kappa(\eta,\tilde{\eta})_{L_g^2}
\end{align*}
and norm
$$\|z\|_{\mathcal{H}_C}^2=\rho_1\|\Phi\| ^2+\rho_2\|\Psi\| ^2+\kappa\omega\|\phi_{x}+\psi\|^2+b\|\psi_{x}\|^2+\rho_3\|\theta\|^2+\tau\|q\|^2+\kappa\|\eta\|_{ L_{g}^2}^2 ,$$
where $z=(\phi,\Phi,\psi,\Psi,\theta,q,\eta),~\tilde{z}=(\tilde{\phi},\tilde{\Phi},\tilde{\psi},\tilde{\Psi},\tilde{\theta},\tilde{q},\tilde{\eta})\in\mathcal{H}_C$.

Then we can reformulate problem \eqref{Cattaneo1}--\eqref{Cattaneo3} as an abstract Cauchy problem in $\mathcal{H}_C$
\begin{equation}\label{cl}
	\left\{\begin{split}
		&z_t=\mathcal{A}_Cz,\quad t>0\\
		&z(0)=(\phi_0,\phi_1,\psi_0,\psi_1,\theta_0,q_0,\eta_0):=z_0
	\end{split}
	\right.
\end{equation}
where $z=(\phi,\Phi,\psi,\Psi,\theta,q,\eta)$ with $\Phi=\phi_t$ and $\Psi=\psi_t$, and the operator $\mathcal{A}_C:D(\mathcal{A}_C)\subset\mathcal{H}_C\rightarrow\mathcal{H}_C$ is defined by
\begin{equation}\label{eq}
	\mathcal{A}_Cz:=\begin{pmatrix}
		\Phi \\
		\frac{\kappa}{\rho_1}[\omega(\phi_{x}+\psi)+\smallint_0^{\infty}g(s)\eta_{x}(s)ds]_{x}-\frac{\sigma}{\rho_1}\theta_{x}\\
		\Psi \\
		\frac{b}{\rho_2}\psi_{xx}-\frac{\kappa}{\rho_2}[\omega(\phi_{x}+\psi)+\smallint_0^{\infty}g(s)\eta_{x}(s)ds]+\frac{\sigma}{\rho_2}\theta\\
		-\frac{1}{\rho_3}q_x-\frac{\sigma}{\rho_3}(\Phi_x+\Psi)\\
		-\frac{\beta}{\tau}q-\frac{1}{\tau}\theta_x\\
		\mathbb{L}\eta +\Phi+\tilde{\Psi}
	\end{pmatrix},
\end{equation}
where the domain $D(\mathcal{A}_C)$ comprises functions $z=(\phi,\Phi,\psi,\Psi,\theta,q,\eta)\in\mathcal{H}_C$ satisfying
\[
\phi\in H_0^1,\,\Phi\in H_0^1,\,\psi\in H^2\cap H_*^1,\,\Psi\in H_*^1,\,\theta\in H_*^1,\,q\in H_0^1,\,\eta\in D(\mathbb{L}),
\]
with additional regularity
$$\omega\phi+\smallint_0^{\infty}g(s)\eta(s)ds\in H^2(0,L),~~\psi_{xx}\in L_*^2.$$

Firstly, we present the global well-posedness result with the following result on $g$:
\begin{assumption}\label{g1}
	The kernel \( g : [0,\infty)\to (0,\infty) \) is absolutely continuous, nonincreasing, and summable with total mass
	\[
	\ell := \smallint_0^\infty g(s) \, ds \in (0, 1).
	\]
\end{assumption}

\begin{theorem}\label{wellposed}
	Let $g$ be a kernel satisfy Assumption \ref{g1}.
	\begin{enumerate}
		\item The operator $\mathcal{A}_F: D(\mathcal{A}_F) \subset \mathcal{H}_F \to \mathcal{H}_F$ is an infinite-generator of a contraction $C_0$-semigroup $\left\{e^{\mathcal{A}_Ft}\right\}_{t\ge0}$ on $\mathcal{H}_F$, which we called {\bf \textit{Fourier-semigroup}}. Then for every initial value $z_0 \in \mathcal{H}_F$ , problem \eqref{cauchyfc}  admits a unique mild solution $z \in C(0,\infty;\mathcal{H}_F)$  given explicitly by
		\[
		z(t) =e^{\mathcal{A}_Ft}z_0, \quad t \geq 0.
		\]
		Moreover, when $z_0 \in D(\mathcal{A}_F)$, this solution becomes classical with improved regularity
		\[
		z \in C^1(0,\infty;\mathcal{H}_F) \cap C(0,\infty;D(\mathcal{A}_F)).
		\]
		\item Similarly, the operator $\mathcal{A}_C: D(\mathcal{A}_C) \subset \mathcal{H}_C \to \mathcal{H}_C$ is an infinite-generator of a contraction $C_0$-semigroup $\left\{e^{\mathcal{A}_Ct}\right\}_{t\ge0}$ on $\mathcal{H}_C$, which we called {\bf \textit{Cattaneo-semigroup}}. Then for every initial value $z_0 \in \mathcal{H}_C$ , problem \eqref{cl}  admits a unique mild solution $z \in C(0,\infty;\mathcal{H}_C)$  given explicitly by
		\[
		z(t) =e^{\mathcal{A}_Ct}z_0, \quad t \geq 0.
		\]
		Moreover, when $z_0 \in D(\mathcal{A}_C)$, this solution becomes classical with improved regularity
		\[
		z \in C^1(0,\infty;\mathcal{H}_C) \cap C(0,\infty;D(\mathcal{A}_C)).
		\]
	\end{enumerate}
\end{theorem}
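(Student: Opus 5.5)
We will apply the Lumer--Phillips theorem in both cases: we show that $\mathcal{A}_F$ (resp. $\mathcal{A}_C$) is dissipative on $\mathcal{H}_F$ (resp. $\mathcal{H}_C$) and that $I-\mathcal{A}_F$ (resp. $I-\mathcal{A}_C$) is onto. Density of the domain then follows because $\mathcal{H}_F,\mathcal{H}_C$ are Hilbert spaces, where a maximal dissipative operator is automatically densely defined. The statements on mild and classical solutions are then standard consequences of $C_0$-semigroup theory (Pazy). Throughout we will use the identity $(\phi+\tilde\psi)_x=\phi_x+\psi$. Combined with $\eta(\cdot,s)\in H_0^1$ in $x$, it lets us rewrite the memory pairing $\kappa(\Phi+\tilde\Psi,\eta)_{L_g^2}=\kappa\int_0^\infty g(s)(\Phi_x+\Psi,\eta_x(s))ds$. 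This term cancels against the memory contributions coming from the second and fourth components after integration by parts.

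\textbf{Dissipativity.} For $z\in D(\mathcal{A}_F)$ we compute $\mathrm{Re}(\mathcal{A}_Fz,z)_{\mathcal{H}_F}$. We integrate by parts using $\Phi\in H_0^1$, $\psi_x|_{0,L}=0$ and $\theta_x|_{0,L}=0$. The elastic and memory terms cancel pairwise, and the coupling terms $\sigma(\theta,\Phi_x+\Psi)$ and $-\sigma(\Phi_x+\Psi,\theta)$ cancel in real part. What remains is
\[
\mathrm{Re}(\mathcal{A}_Fz,z)_{\mathcal{H}_F}=-\beta\|\theta_x\|^2+\kappa\,\mathrm{Re}(\mathbb{L}\eta,\eta)_{L_g^2}.
\]
For the last term we integrate by parts in $s$, using $\eta(\cdot,0)=0$, the monotonicity and absolute continuity of $g$, and a truncation argument on $[\epsilon,1/\epsilon]$ to handle the behavior at $s\to0,\infty$. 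This gives $\mathrm{Re}(\mathbb{L}\eta,\eta)_{L_g^2}=\frac12\int_0^\infty g'(s)\|\eta_x(s)\|^2ds\le0$. The Cattaneo case is identical: the $\theta$–$q$ coupling terms $-(q_x,\theta)-(\theta_x,q)$ cancel in real part since $q\in H_0^1$, which leaves $-\beta\|q\|^2$ instead of $-\beta\|\theta_x\|^2$.

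\textbf{Range condition.} This is the main step. Given $f=(f_1,\dots,f_6)\in\mathcal{H}_F$ we solve $z-\mathcal{A}_Fz=f$.

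1. The first and third equations give $\Phi=\phi-f_1$ and $\Psi=\psi-f_3$.
2. The $\eta$-equation $\eta+\eta_s=\Phi+\tilde\Psi+f_6$ with $\eta(0)=0$ is solved explicitly:
\[
\eta(s)=(1-e^{-s})(\Phi+\tilde\Psi)+\int_0^s e^{\tau-s}f_6(\tau)d\tau.
\]
We then check that $\eta\in L_g^2$ and $\mathbb{L}\eta\in L_g^2$, using the summability of $g$ together with a Young/Fubini estimate for the convolution term.
3. Substituting into the remaining equations yields an elliptic system in $(\phi,\psi,\theta)$. The effective shear coefficient is $\kappa(\omega+\int_0^\infty g(s)(1-e^{-s})ds)>0$.
4. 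We solve this system by Lax--Milgram on $V=H_0^1\times H_*^1\times H_*^1$. For the $\theta$-equation we must check zero mean: it follows by integrating the equation and using $\theta_x|_{0,L}=0$, $\phi|_{0,L}=0$, and the mean-zero properties of $\psi$ and the data. Likewise the $\psi$-equation must be posed consistently on mean-zero functions. We plan to use test functions in $H^1$ and observe that constants are compatible. If not, we enlarge $V$ to $H^1$ for $\psi$ and show that the mean vanishes a posteriori.
5. Coercivity of the bilinear form follows from the positive zero-order terms $\rho_1\|\phi\|^2+\rho_2\|\psi\|^2+\rho_3\|\theta\|^2$ plus $b\|\psi_x\|^2$, $\kappa\omega'\|\phi_x+\psi\|^2$ and $\beta\|\theta_x\|^2$. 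The coupling terms are skew in the appropriately scaled unknowns; to exploit this we use $\theta$ together with $\phi,\psi$ themselves rather than their time derivatives, by multiplying through consistently.
6. Elliptic regularity then recovers $\psi,\theta\in H^2$, $\omega\phi+\int g\eta\,ds\in H^2$, and the boundary conditions.

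For $\mathcal{A}_C$, we first eliminate $q=(\tau f_6-\theta_x)/(\tau+\beta)$, which reduces to the same structure, and then recover $q\in H_0^1$ from $\theta_x|_{0,L}=0$ via the equation.

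The expected obstacle is handling the coupling terms and the memory term $\int g\eta_x ds$ in the Lax--Milgram formulation so that the form is coercive. We will first try a rescaled weak formulation that mirrors the energy inner product, so that the skew structure seen in the dissipativity computation reappears. Should this fail, we will fall back on solving $\lambda z-\mathcal{A}z=f$ with $0\in\rho(\mathcal{A})$ and a perturbation argument.
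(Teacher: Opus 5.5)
Your proposal is correct and follows essentially the same route as the paper. Both use Lumer--Phillips with the dissipation identity $\operatorname{Re}(\mathcal{A}_Fz,z)_{\mathcal{H}_F}=-\beta\|\theta_x\|^2+\frac{\kappa}{2}\int_0^\infty g'(s)\|\eta_x(s)\|^2\,ds$, the explicit formula for $\eta$, and reduction to an elliptic system with shear coefficient $\kappa(\omega+\hat\kappa)$, where $\hat\kappa=\int_0^\infty g(s)(1-e^{-s})\,ds$. Then comes Lax--Milgram on $H_0^1\times H_*^1\times H_*^1$; as you anticipated, the coupling terms $-\sigma(\theta,\hat\phi_x+\hat\psi)+\sigma(\phi_x+\psi,\hat\theta)$ are exactly skew there. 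The argument closes with the mean-zero compatibility check and elliptic regularity.

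One small correction concerns the Cattaneo case, which the paper omits. Since $f^6$ is only in $L^2$, $\theta$ is only in $H^1$ and $\theta_x$ has no boundary trace. So $q\in H_0^1$ should come from the natural boundary condition of the weak $\theta$-equation tested against all $\hat\theta\in H^1$, after first reading off $q_x\in L^2$ from that equation. It should not be derived from $\theta_x|_{0,L}=0$.
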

To state the stability results on the Fourier and Cattaneo semigroups we let
\begin{equation}\label{chi0}
	\chi_0 := \frac{\kappa}{\rho_1} - \frac{b}{\rho_2}
\end{equation}
and
\begin{equation}\label{chi1}
	\chi_1:=\chi_0-\frac{\tau b}{\rho_1\rho_2}\left(\rho_1\rho_3\chi_0+\sigma^2\right).
\end{equation}
\begin{theorem}\label{thmFourier}\textbf{(Stability of the Fourier-semigroup)} Let Assumption \ref{g1} hold.
	\begin{enumerate}
		\item The Fourier-semigroup $\left\{e^{\mathcal{A}_Ft}\right\}_{t\ge0}$ is  \textbf{exponentially stable}, i.e., there exists constants $M\ge1$ and $\gamma>0$ such that
		$$\left\|e^{\mathcal{A}_Ft}\right\|_{\mathcal{L}(\mathcal{H}_F)}\le Me^{-\gamma t},\quad t\ge0,$$
		\textbf{if and only if} both the $\delta$-condition \eqref{delta c} and $\chi_0 = 0$  hold.
		\item If the $\delta$-condition \eqref{delta c} holds, the Fourier-semigroup $\left\{e^{\mathcal{A}_Ft}\right\}_{t\ge0}$ is \textbf{polynomially stable} of order $1/2$, whatever $\chi_0 = 0$ or not,  i.e., there exists a constant $C>0$ such that
		\[
		\left\|e^{\mathcal{A}_Ft}z_0\right\|_{\mathcal{H}_F} \leq C t^{-\frac12} \|z_0\|_{D(\mathcal{A}_F)}, \quad t \geq 1.
		\]
	\end{enumerate}
\end{theorem}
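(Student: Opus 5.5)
We will work entirely in the frequency domain. Exponential stability comes from the Gearhart--Prüss--Huang theorem: a contraction semigroup is exponentially stable iff $i\mathbb{R}\subset\rho(\mathcal{A}_F)$ and $\sup_{\lambda\in\mathbb{R}}\|(i\lambda-\mathcal{A}_F)^{-1}\|<\infty$. Polynomial decay of order $1/2$ comes from the Borichev--Tomilov theorem: it suffices to show $i\mathbb{R}\subset\rho(\mathcal{A}_F)$ and $\|(i\lambda-\mathcal{A}_F)^{-1}\|=O(|\lambda|^2)$.

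\textbf{Dissipation and the imaginary axis.} By Theorem~\ref{wellposed}, $\mathcal{A}_F$ generates a contraction semigroup. A direct computation using $\eta(\cdot,0)=0$ and integration by parts in $s$ gives
\[
\mathrm{Re}(\mathcal{A}_Fz,z)_{\mathcal{H}_F}=-\beta\|\theta_x\|^2+\frac{\kappa}{2}\int_0^\infty g'(s)\|\eta_x(s)\|^2ds\le 0 .
\]
Under the $\delta$-condition \eqref{delta c}, the standard argument of Pata (and Conti et al. \cite{MR3152192}) converts the memory dissipation into control of $\|\eta\|_{L_g^2}^2$ itself. To do this, test the resolvent equation $i\lambda z-\mathcal{A}_Fz=f$ against $\eta$ in $L_g^2$, or equivalently use the estimate $\|\eta\|_{L_g^2}^2\le C\|z\|\|f\|$ valid under \eqref{delta c}.

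Next, $0\in\rho(\mathcal{A}_F)$ follows by solving the stationary elliptic system with Lax--Milgram. The absence of purely imaginary eigenvalues follows because $\theta_x=0$ and $\eta=0$ force $\phi_x+\psi=0$ via the $\eta$-equation, and then $\psi=\phi=0$. Together with a contradiction/compactness argument, this gives $i\mathbb{R}\subset\rho(\mathcal{A}_F)$.

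\textbf{Resolvent multipliers.} For $|\lambda|$ large we write the resolvent system with $\Phi=i\lambda\phi-f_1$ and $\Psi=i\lambda\psi-f_3$. From the dissipation identity and the Pata estimate we get
\[
\beta\|\theta_x\|^2+\|\eta\|^2_{L_g^2}\le C\|z\|\|f\| .
\]
We then recover the shear quantities:
\begin{itemize}
\item Multiplying the $\eta$-equation by $\int_0^\infty g(s)\eta(s)\,ds$ recovers $\|\Phi\|$ and $\|\phi_x+\psi\|$ up to the memory terms.
\item More usefully, multiplying the heat equation by $\int_0^x(\Phi+\Psi)$, or the second equation by $\overline{\phi_x+\psi}$-type quantities, gives $\|\phi_x+\psi\|$ and $\|\Phi\|$ bounded by $C\|z\|\|f\|+C\|f\|^2$.
\end{itemize}

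The remaining quantities $\|\Psi\|$ and $\|\psi_x\|$ come from multiplying the $\psi$-equation by $\overline{\psi}$ and by the coupling term. This produces the key cross term
\[
\rho_2\lambda^2\Big(\frac{\kappa}{\rho_1}-\frac{b}{\rho_2}\Big)(\psi_x,\ \omega(\phi_x+\psi)+\textstyle\int g\eta_x)\,,
\]
obtained by replacing $\kappa[\ldots]_x$ with the first equation.

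\textbf{Main obstacle: the $\chi_0$ term.} When $\chi_0=0$ this term vanishes and we obtain a uniform resolvent bound, which gives part (1), sufficiency. When $\chi_0\ne0$, we bound the term by $|\lambda|\,|\chi_0|\,\|\psi_x\|\,\|\lambda(\phi_x+\psi)\|$. Using the first equation to control $\lambda\Phi$ loses one power of $\lambda$ per step, which yields $\|z\|\le C\lambda^2\|f\|$ and hence part (2) via Borichev--Tomilov.

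\textbf{Necessity.} We argue in two parts:
\begin{itemize}
\item \emph{The $\delta$-condition.} Necessity follows as in Conti et al.: exponential stability of the coupled system forces exponential stability of the translation semigroup on $L_g^2$ restricted to the history component. One tests with $f=(0,\dots,0,\xi)$ with $\xi$ supported in $s$, and then uses the $\delta$-condition characterization.
\item \emph{$\chi_0=0$.} Take $f=(0,\,\sin(n\pi x/L)/\rho_1,\,0,\,0,\,0,\,0)$ or the cosine analogue, and look for solutions of the form $\phi=A\sin(\mu x)$, $\psi=B\cos(\mu x)$, $\theta=C\cos(\mu x)$, $\eta=\varphi(s)\sin(\mu x)$. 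Then solve the ODE in $s$ explicitly, $\varphi(s)=(1-e^{-i\lambda s})(A+B/\mu)$. Choosing $\lambda_n$ with $\rho_2\lambda_n^2=b\mu_n^2+\dots$ (the bending-wave resonance) gives $|B|\to\infty$ when $\chi_0\ne0$, so the resolvent is not bounded on $i\mathbb{R}$.
\end{itemize}

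The delicate steps are this asymptotic spectral computation with the memory factor $\int g(s)(1-e^{-i\lambda s})\,ds\to\ell$ and the Pata memory estimate; I expect the former to be the main work.
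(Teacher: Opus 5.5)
Your architecture matches the paper's: Gearhart--Pr\"uss and Borichev--Tomilov, multiplier estimates on the resolvent system, the Chepyzhov--Pata argument for necessity of \eqref{delta c}, and resonant Fourier modes for necessity of $\chi_0=0$. However, the two estimates that carry the theorem are asserted rather than proved, and one of them is wrong as stated.

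\emph{The memory estimate.} Testing the $\eta$-equation against $\eta$ in $L^2_g$ only produces $-\frac12\int_0^\infty g'(s)\|\eta_x(s)\|^2ds$, plus a coupling term containing $\Phi_x$. The $\delta$-condition does not give $g\le -Cg'$, since flat zones are allowed. So $\|\eta\|_{L^2_g}^2\le C\|z\|\|f\|$ does not follow.

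Nor can this estimate come first. Solving the $\eta$-equation gives $\eta(s)=(1-e^{-i\lambda s})(\phi+\tilde\psi)+(\text{terms in }f)$. Hence $\|\eta\|_{L^2_g}^2\approx 2\|\phi_x+\psi\|^2\int_0^\infty g(s)(1-\cos\lambda s)\,ds$, which is of the same size as $\|\phi_x+\psi\|^2$.

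The paper proceeds in the opposite order:
\begin{enumerate}
\item On $J=\{\alpha g'+g<0\}$ (Lemma~\ref{Ig}) the dissipation does bound $\|\eta\|^2_{L^2_{\tilde g}}$.
\item Testing the explicit formula for $\eta$ against $\phi+\tilde\psi$ in $L^2_{\tilde g}$ gives $I_g(\lambda)\|\phi_x+\psi\|^2\lesssim(\cdots)\|z\|\|f\|$.
\item Only then is $\|\eta\|_{L^2_g}$ recovered from the explicit formula. This is where \eqref{delta c} enters, through the Hardy-type bound of Lemma~\ref{lemggj}.
\end{enumerate}
Your heat-equation multipliers for $\|\phi_x+\psi\|$ and $\|\Phi\|$ are only named. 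The obvious ones, against $\int_0^x\Phi$ or $\int_0^x(\Phi+\Psi)$, produce cross terms such as $(\Phi,\Psi)$ or $(\Phi,\int_0^x\Psi)$ that are not a priori small.

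Relatedly, the resolvent is not compact because of the memory component. So $i\mathbb{R}\subset\rho(\mathcal{A}_F)$ needs the approximate-spectrum argument (Lemma~\ref{Thm:SpectralApprox}). That argument requires a priori bounds valid for every fixed $\lambda\neq0$ (with $\lambda=0$ treated separately), not just for large $|\lambda|$.

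\emph{The $\chi_0$ term.} Take $\|\psi_x\|\lesssim\|z\|$ and $\|\phi_x+\psi\|^2\lesssim\|z\|\|f\|$. Then your bound $|\lambda||\chi_0|\|\psi_x\|\|\lambda(\phi_x+\psi)\|$ only yields $\|z\|^2\le\epsilon\|z\|^2+C\lambda^2\|z\|^{3/2}\|f\|^{1/2}$. That means $\|z\|\lesssim\lambda^4\|f\|$ and decay $t^{-1/4}$. The phrase ``one power of $\lambda$ per step'' is not a derivation of $O(\lambda^2)$.

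In the paper the cross term is $\chi_0\rho_2(S,\psi_{xx})$ with $S=\omega(\phi_x+\psi)+\int_0^\infty g(s)\eta_x(s)ds$. It arises from \eqref{2F}-(d) tested with $S$, \eqref{2F}-(f) tested with $\tilde\Psi$ in $L^2_g$, and \eqref{2F}-(b) tested with $\psi_x$. The decisive step is to substitute $b\psi_{xx}=i\lambda\rho_2\Psi+\kappa S-\sigma\theta-\rho_2f^4$ from \eqref{2F}-(d). The price is then $|\lambda|\|S\|\|\Psi\|\le\epsilon\|\Psi\|^2+C\lambda^2\|S\|^2$ with $\|S\|^2\lesssim\|z\|\|f\|$. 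This gives exactly $\|z\|\lesssim\lambda^2\|f\|$.

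\emph{The test vector for necessity of $\chi_0=0$.} Your first test vector, forcing in the $\Phi$-slot, fails. At the resonance $D(\lambda_n)=0$ one has $|\det\mathbf{M}_n(\lambda_n)|\sim\gamma_n^2$, while the cofactor giving $B_n$ is $O(\gamma_n^{3/2})$. So $|\lambda_nB_n|=O(1)$ and the response stays bounded. You must force the $\Psi$-equation with $\cos(n\pi x/L)$, as in Theorem~\ref{x1}. Then $|B_n|\to\rho_2^2|\chi_0|/(\rho_1b^2)$, which is finite, not infinite. The blow-up is $\|(i\lambda_n-\mathcal{A}_F)^{-1}f_n\|\ge\sqrt{\rho_2}|\lambda_n||B_n|\sim n$.
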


\begin{theorem}\label{thmCattaneo}\textbf{(Stability of the Cattaneo-semigroup)} Let Assumption \ref{g1} hold.
	\begin{enumerate}
		\item The Cattaneo-semigroup $\left\{e^{\mathcal{A}_Ct}\right\}_{t\ge0}$ is  \textbf{exponentially stable}, i.e., there exists constants $M\ge1$ and $\gamma>0$ such that
		$$\left\|e^{\mathcal{A}_Ct}\right\|_{\mathcal{L}(\mathcal{H}_C)}\le Me^{-\gamma t},\quad t\ge0,$$
		\textbf{if and only if} both the $\delta$-condition \eqref{delta c} and $\chi_1 = 0$  hold.
		\item If the $\delta$-condition \eqref{delta c} holds, the Cattaneo-semigroup $\left\{e^{\mathcal{A}_Ct}\right\}_{t\ge0}$ is \textbf{polynomially stable} of order $1/2$, whatever $\chi_1 = 0$ or not, i.e., there exists a constant $C>0$ such that
		\[
		\left\|e^{\mathcal{A}_Ct}z_0\right\|_{\mathcal{H}_C} \leq C t^{-\frac12} \|z_0\|_{D(\mathcal{A}_C)}, \quad t \geq 1.
		\]
	\end{enumerate}
\end{theorem}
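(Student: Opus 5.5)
We will prove Theorem \ref{thmCattaneo} by frequency-domain methods. Exponential stability will come from the Gearhart--Prüss--Huang theorem: since $\mathcal{A}_C$ generates a contraction semigroup (Theorem \ref{wellposed}), it suffices to show $i\mathbb{R}\subset\rho(\mathcal{A}_C)$ and $\sup_{\lambda\in\mathbb{R}}\|(i\lambda-\mathcal{A}_C)^{-1}\|<\infty$. The polynomial rate $t^{-1/2}$ will come from the Borichev--Tomilov theorem, which requires $\|(i\lambda-\mathcal{A}_C)^{-1}\|=O(\lambda^{2})$.

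\emph{Step 1: dissipation and the imaginary axis.} We first compute the dissipation
\[
\mathrm{Re}(\mathcal{A}_Cz,z)_{\mathcal{H}_C}=-\beta\|q\|^2+\frac{\kappa}{2}\smallint_0^\infty g'(s)\|\eta_x(s)\|^2ds\le0 .
\]
Under the $\delta$-condition one has $-g'\ge c\,g$ in the appropriate (measure) sense, as in \cite{MR2215885}, so the right-hand side controls $\|q\|^2+\|\eta\|_{L_g^2}^2$. We then show $i\mathbb{R}\subset\rho(\mathcal{A}_C)$ by contradiction. Since $\mathcal{A}_C^{-1}$ is compact modulo the history component, a unique-continuation argument on $i\lambda z=\mathcal{A}_Cz$ applies: $q=0$ and $\eta=0$ give $\theta_x=0$, and hence $\theta=0$ because $\theta\in L_*^2$. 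Then $(\phi_x+\psi)_t=0$, and the elastic equations force $z=0$. Approximate eigenvalues are handled by the same estimates as in Step 2.

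\emph{Step 2: resolvent estimates.} Take $i\lambda z-\mathcal{A}_Cz=f$ with $|\lambda|\to\infty$. From Step 1 we have $\|q\|^2+\|\eta\|_{L_g^2}^2\le C\|z\|\|f\|$. We then use multipliers in order.
\begin{enumerate}
\item The $\eta$-equation multiplied against $\int g\,\eta$ (Pata's trick) recovers $\|\Phi+\tilde\Psi\|$-type terms, and hence $\|\Phi\|$ and $\|\phi_x+\psi\|$.
\item The $\theta$ and $q$ equations give $\|\theta\|$. From the $q$-equation, $\theta_x=-(i\lambda\tau+\beta)q+\tau f_6$. Multiplying the $\theta$-equation by $\overline{\int_0^x q}$ yields $\rho_3\|\theta\|^2$ up to coupling terms in $\Phi_x+\Psi$.
\item The shear multiplier $\overline{\phi_x+\psi}$ in the second equation exchanges $\|\Psi\|^2$ and $b\|\psi_x\|^2$.
\end{enumerate}
The crucial coupling term is the boundary/cross term $(b\psi_x,\Phi)$, equivalently $(\psi_x,\phi_x+\psi)$ at high frequency. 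Eliminating it through the first and second equations produces a factor
\[
\Bigl(\frac{\kappa}{\rho_1}-\frac{b}{\rho_2}\Bigr)-\tau\cdot(\text{thermal correction}),
\]
and the thermal correction arises from the substitution $\theta_x\sim -i\lambda\tau q$ combined with $\rho_3 i\lambda\theta\sim-\sigma(\Phi_x+\Psi)$. After a careful computation we expect this factor to equal $\chi_1$ times a nonzero constant, with the cross term appearing as $\chi_1\lambda(\cdots)$. When $\chi_1=0$ all terms are $O(\|z\|\|f\|+\|f\|^2)$, which gives a uniformly bounded resolvent. When $\chi_1\ne0$ the term is $O(|\lambda|\,\|z\|\|f\|)$, and interpolation gives $\|z\|\le C\lambda^2\|f\|$. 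This yields part 2 by Borichev--Tomilov and sufficiency in part 1.

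\emph{Step 3: necessity.} If the $\delta$-condition fails, we follow Conti et al.\ \cite{MR3152192} and construct data concentrated on the memory component for which the resolvent blows up. This transfers the non-exponential decay of the history semigroup $R(t)$, and the argument does not depend on the thermal part. If $\chi_1\ne0$, we take $f$ with a single Fourier mode, for instance $\sin(n\pi x/L)$ in $f_2$ and $\cos$ in the others, at $\lambda_n\sim n$ tuned to the shear frequency. Solving the resulting finite ODE system explicitly then shows $\|z_n\|\to\infty$ while $\|f_n\|$ stays bounded.

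\emph{Main obstacle.} The main difficulty is the algebra in Step 2 that isolates exactly $\chi_1$, together with the dual explicit spectral computation in Step 3. Both require tracking the $\tau$-dependent terms coming from the Cattaneo equation so that the residual coefficient is precisely $\chi_0-\frac{\tau b}{\rho_1\rho_2}(\rho_1\rho_3\chi_0+\sigma^2)$. We would first carry out the explicit modal computation of Step 3 and use it to guide the multiplier choices in Step 2.
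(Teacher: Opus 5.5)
Your architecture (Gearhart--Pr\"uss and Borichev--Tomilov, a multiplier identity isolating $\chi_1$, a one-mode quasi-mode for necessity) matches the paper's. However, the bound your Step 2 starts from is false. The $\delta$-condition \eqref{delta c} with $C>1$ does not give $-g'\ge cg$ in any sense. The kernel $g$ may be constant on intervals of length up to $(\ln C)/\delta$, and there the dissipation $-\int_0^\infty g'(s)\|\eta_x(s)\|^2ds$ sees nothing. So $\|\eta\|_{L^2_g}^2\le C\|z\|\|f\|$ does not follow. Nor does $\eta=0$ in your eigenvector argument, though that one can be repaired from $\eta(s)=(1-e^{-i\lambda s})(\phi+\tilde\psi)$. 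Since part 1 is an equivalence with \eqref{delta c}, you cannot strengthen the kernel hypothesis to get around this. The paper supplies the missing mechanism in three steps:
\begin{enumerate}
\item Dissipation only controls $\|\eta\|^2_{L^2_{\tilde g}}$, i.e.\ on the set $J$ of Lemma \ref{Ig}.
\item Testing the explicit formula $\eta(s)=(1-e^{-i\lambda s})(\phi+\tilde\psi)+\cdots$ against $\phi+\tilde\psi$ in $L^2_{\tilde g}$, and using $\operatorname{Re}\int_0^\infty\tilde g(s)(1-e^{-i\lambda s})ds=I_g(\lambda)\to\int_0^\infty\tilde g(s)ds>0$, bounds $\|\phi_x+\psi\|^2$.
\item Only then is the full $\|\eta\|^2_{L^2_g}$ recovered from the same formula, with \eqref{delta c} entering only through Lemma \ref{lemggj}.
\end{enumerate}
Your ``Pata's trick'' item does not provide this.

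Second, the $\chi_1$ identity you leave as ``expected'' comes out (see \eqref{Psigj1}--\eqref{Psigj2}) as $(1-\tau b\rho_3/\rho_2)\|\Psi\|^2=(\text{controllable terms})+\frac{\rho_2\chi_1}{b}(\omega(\phi_x+\psi)+\int_0^\infty g(s)\eta_x(s)ds,\,i\lambda\Psi)$. When $\rho_2=\tau b\rho_3$ the left side vanishes, while $\chi_1=-\tau b\sigma^2/(\rho_1\rho_2)\neq0$. Your route then gives no $O(\lambda^2)$ bound, so part 2 is unproved in that case. The paper needs the separate estimate of Theorem \ref{thmboundresolvent}-2, built on the $\chi_0$ identity and paying $C|\lambda|^2$ for the term $i\lambda\frac{\sigma\tau}{\rho_1}(\psi_x,q)$ via $\|q\|^2\le C\|z\|\|f\|$. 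Sufficiency also uses the fact that $\chi_1=0$ forces $\rho_2\neq\tau b\rho_3$.

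Note too that the bad term is $O(|\lambda|\,\|\Psi\|\,(\|z\|\|f\|)^{1/2})$, not $O(|\lambda|\,\|z\|\|f\|)$. The latter would give $\|z\|\le C|\lambda|\|f\|$, which Theorem \ref{x2} excludes ($|B_n|\to\infty$ when $\tau b\rho_3=\rho_2$).

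Finally, your necessity quasi-mode sits on the wrong equation. At the shear frequency the $\phi$-equation is damped by memory and heat, and for $\chi_0\neq0$ the bending symbol $\lambda^2\rho_2-b\gamma_n\approx\rho_2\chi_0\gamma_n$ is far from resonance, so the response stays bounded. The paper instead forces the $\Psi$-component with $\cos(n\pi x/L)$ at $\lambda_n^2=b\gamma_n^2/(\rho_2\gamma_n+\rho_1)$, the undamped bending frequency. This makes $\det\mathbf{U}_n(\lambda_n)$ explicit and yields $\|(i\lambda_n-\mathcal{A}_C)^{-1}f_n\|_{\mathcal{H}_C}\ge\sqrt{\rho_2}|B_n||\lambda_n|\to\infty$ with $\|f_n\|_{\mathcal{H}_C}$ fixed.
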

\begin{remark}\label{rmkCompare}
	The stability properties of the Fourier- and Cattaneo-semigroups reveal both similarities and striking differences:
	
	\begin{itemize}
		\item \textbf{Polynomial stability:} Both semigroups exhibit polynomial decay of order $1/2$ under the $\delta$-condition \eqref{delta c}, regardless of the values of $\chi_0$ and $\chi_1$. This suggests a common underlying mechanism for polynomial stabilization in these systems.
		
		\item \textbf{Exponential stability dichotomy:} The Fourier-semigroup achieves exponential stability (\textit{if and only if} both $\delta$-condition \eqref{delta c} and $\chi_0 = 0$ hold), while the Cattaneo-semigroup requires the modified condition $\chi_1 = 0$ for exponential decay. This highlights a fundamental structural difference between the two models: the Fourier law permits energy dissipation sufficient for exponential decay under specific conditions, whereas the Cattaneo model introduces  thermal relaxation time effects that alter the stability threshold.
		
		\item \textbf{Parameter relationships:} The critical parameter $\chi_1$ for Cattaneo's model contains a  thermal relaxation time $\tau$ correction to $\chi_0$:
		$$\chi_1 = \chi_0 - \frac{\tau b}{\rho_1\rho_2}(\rho_1\rho_3\chi_0+\sigma^2).$$ As $\tau \to 0$, we observe:
		\begin{align*}
			\chi_1 \to \chi_0,\quad \mathcal{A}_C \to \mathcal{A}_F \quad \text{(formally)}
		\end{align*}
		recovering the Fourier-system as a limit of the Cattaneo model.
		
		\item \textbf{Physical interpretation:} The $\tau$-dependent term in $\chi_1$ represents thermal relaxation effects. Its vanishing in the $\tau\to0$ limit demonstrates how Cattaneo's law bridges between hyperbolic (finite $\tau$) and parabolic ($\tau=0$) heat propagation models.
	\end{itemize}
	
	These results demonstrate that while both thermodynamical frameworks share some stabilizing features, their long-term dynamical behaviors are fundamentally distinct due to differences in their constitutive laws. The Cattaneo-semigroup's stability characteristics smoothly connect to the Fourier case through the relaxation time parameter $\tau$.
\end{remark}
\section{Well-Posedness}\label{sec2.1}
The well-posedness of the abstract Cauchy problems \eqref{cauchyfc} and \eqref{cl}, as stated in Theorem~\ref{wellposed}, is established through an application of the Lumer-Phillips theorem \cite{MR710486}. Since the proof for Cauchy problem \eqref{cl} follows similarly to that of \eqref{cauchyfc}, we present here only the detailed proof for \eqref{cauchyfc}.

The approach necessitates verification of two fundamental properties for the operator $\mathcal{A}_F$: dissipativity and range surjectivity of $I - \mathcal{A}_F$.

\noindent\textbf{Dissipativity Analysis.}	For any $z = (\phi, \Phi, \psi, \Psi, \theta, \eta) \in D(\mathcal{A}_F)$, we compute
\begin{align}\label{hs1}
	\operatorname{Re}( \mathcal{A}_F z, z )_{\mathcal{H}_F} = & \operatorname{Re}\Bigg\{\rho_1\left(\Phi,\frac{\kappa}{\rho_1}\left[\omega(\phi_x + \psi) + \smallint_0^\infty g(s)\eta_x(s)ds\right]_x - \frac{\sigma}{\rho_1}\theta_x\right)\notag\\
	&+\rho_2\left(\Psi,\frac{b}{\rho_2}\psi_{xx} - \frac{\kappa}{\rho_2}\left[\omega(\phi_x + \psi) + \smallint_0^\infty g(s)\eta_x(s)ds\right] + \frac{\sigma}{\rho_2}\theta\right)+\kappa\omega\left(\phi_x+\psi,\Phi_x+\Psi\right)\notag\\
	&+b\left(\psi_x,\Psi_x\right)+\rho_3\left(\theta,\frac{\beta}{\rho_3}\theta_{xx} - \frac{\sigma}{\rho_3}(\Phi_x + \Psi) \right)+\kappa\smallint_0^{\infty}g(s)\left(\eta_x(s),
	-\eta_{sx} + \Phi_x + \Psi\right)\Bigg\}\notag\\
	=&\operatorname{Re}\Bigg\{-\kappa\omega(\Phi_x, \phi_x + \psi) - \kappa \smallint_{0}^{\infty} g(s)\left(\Phi_x, \eta_x(s)\right) \, ds + \sigma (\Phi_x, \theta) - b(\Psi_x, \psi_x) - \kappa\omega(\Psi, \phi_x + \psi) \notag\\
	& - \kappa \smallint_{0}^{\infty} g(s)(\Psi, \eta_x(s)) \, ds + \sigma (\Psi, \theta) + \kappa\omega(\phi_x + \psi, \Phi_x + \Psi) + b(\psi_x, \Psi_x) \notag\\
	& - \beta \|\theta_x\|^2 - \sigma (\theta, \Phi_x + \Psi) - \kappa \smallint_{0}^{\infty} g(s)(\eta_x(s), \eta_{sx}(s))\, ds + \kappa \smallint_{0}^{\infty} g(s)(\eta_x(s), \Phi_x + \Psi) \, ds\Bigg\}\notag\\		=&-\beta\|\theta_x\|^2 -\kappa \operatorname{Re}\left\{( \eta, \eta_s )_{L_g^2}\right\}.
\end{align}
Since $\eta\in L_g^2$, we have
\(\smallint_0^\infty g(s)\|\eta_x(s)\|^2ds<\infty\), which, together with $g(\cdot)>0$, implies there exists a positive increasing sequence $\{s_n\}_{n=1}^\infty$ such that
$$\lim_{n\to\infty}s_n=\infty\hbox{ and }\lim_{n\to\infty}\left(g(s_n)\|\eta_x(s_n)\|^2\right)=0.$$
Then, by $\|\eta_x(0)\|=0$ and $0<g(0)<\infty$,
\begin{align}
	-\kappa ( \eta, \eta_s )_{L_g^2} =&-\kappa\smallint_0^\infty g(s)(\eta_x, \eta_{sx}) ds=-\frac{\kappa}{2}\lim_{n\to\infty}\smallint_0^{s_n}g(s)\frac{d}{ds}\|\eta_x(s)\|^2ds\notag\\
	=&-\frac{\kappa}{2}\lim_{n\to\infty}\left(g(s_n)\|\eta_x(s_n)\|^2\right)+\frac{\kappa}{2}\lim_{n\to\infty}\smallint_0^{s_n}g'(s)\|\eta_x(s)\|^2ds\notag\\
	=&\frac{\kappa}{2}\smallint_0^{\infty}g'(s)\|\eta_x(s)\|^2ds.\label{jxsn}
\end{align}
Thus, we derive
\begin{equation}\label{4}
	\operatorname{Re}(\mathcal{A}_F z, z )_{\mathcal{H}_F} = -\beta\|\theta_x\|^2 + \frac\kappa2\smallint_0^\infty g'(s)\|\eta_x(s)\|^2 ds \leq 0
\end{equation}
since $g(\cdot)$ is an absolutely continuous, nonincreasing function.

\noindent\textbf{Surjective analysis.} Given $f = (f^1, f^2, f^3, f^4, f^5, f^6) \in \mathcal{H}_F$, we seek $z = (\phi, \Phi, \psi, \Psi, \theta, \eta) \in D(\mathcal{A}_F)$ satisfying $z-\mathcal{A}_Fz=f$, i.e.,	
\begin{equation}\label{system_A}
	\begin{cases}
		&\text{(a)}:\quad	\phi - \Phi = f^1 \in H_0^1, \\
		&\text{(b)}:\quad	\rho_1\Phi - \kappa\left[\omega(\phi_x + \psi) + \smallint_0^\infty g(s)\eta_x(s)ds\right]_x + \sigma\theta_x = \rho_1f^2\in L^2, \\
		&\text{(c)}:\quad		\psi - \Psi = f^3\in H_*^1, \\
		&\text{(d)}:\quad	\rho_2\Psi - b\psi_{xx} + \kappa\left[\omega(\phi_x + \psi) + \smallint_0^\infty g(s)\eta_x(s)ds\right] - \sigma\theta = \rho_2f^4\in L_*^2, \\
		&\text{(e)}:\quad	\rho_3\theta - \beta\theta_{xx} + \sigma(\Phi_x + \Psi) = \rho_3f^5\in L_*^2,\\
		&\text{(f)}:	\quad\eta + \eta_s - (\Phi + \tilde{\Psi}) = f^6\in L_g^2.
	\end{cases}
\end{equation}

Using equations (a), (c), and (f) from \eqref{system_A}, we get
\begin{equation}\label{transf}
	\begin{cases}
		\Phi = \phi - f^1, \\
		\Psi = \psi - f^3, \\
		\eta(s) = (1 - e^{-s})(\Phi + \tilde{\Psi}) + \smallint_0^s f^6(\tau)e^{-(s-\tau)}d\tau.
	\end{cases}
\end{equation}
Substituting the above equalities into the equations (b), (d), (e) of \eqref{system_A} yields
\begin{equation}\label{qju1}
	\begin{cases}
		\rho_1\phi - \kappa(\omega+\hat\kappa)(\phi_x + \psi)_x  + \sigma\theta_x = R_1, \\
		\rho_2\psi - b\psi_{xx} + \kappa(\omega+\hat\kappa)(\phi_x + \psi)- \sigma\theta = R_2, \\
		\rho_3\theta - \beta\theta_{xx} + \sigma(\phi_x + \psi) = R_3,
\end{cases}\end{equation}
where $\hat\kappa:=\smallint_0^\infty g(s)(1 - e^{-s})ds>0$, and
\[\begin{cases} R_1:=\rho_1f^1-\kappa\hat\kappa(f^1_{xx}+f^3_x)+\kappa\smallint_0^\infty g(s)e^{-s}\left(\smallint_0^se^{\tau}f^6_{xx}(\tau)d\tau\right)ds+\rho_1f^2\in H^{-1},\\ R_2:=\rho_2f^3+\kappa\hat\kappa(f_x^1+f^3)-\kappa\smallint_0^{\infty}g(s)e^{-s}\left(\smallint_0^se^{\tau}f_x^6(\tau)d\tau\right)ds+\rho_2f^4\in L^2,\\
	R_3:=\sigma(f_x^1+f^3)+\rho_3f^5\in L^2.
\end{cases}\]
The weak formulation of system \eqref{qju1} is given by
\begin{equation}\label{qju2}
	\begin{cases}
		\rho_1(\phi,\hat{\phi})+\kappa(\omega+\hat\kappa)(\phi_x + \psi,\hat{\phi}_x)+ \sigma(\theta_x,\hat{\phi}) = \langle R_1,\hat{\phi}\rangle,\\
		\rho_2(\psi,\hat{\psi}) + b(\psi_{x},\hat{\psi}_x) + \kappa(\omega+\hat\kappa)(\phi_x + \psi,\hat{\psi})- \sigma(\theta,\hat{\psi}) = (R_2,\hat{\psi}),\\
		\rho_3(\theta,\hat{\theta}) +\beta(\theta_{x}, \hat{\theta}_x)+ \sigma(\phi_x + \psi,\hat{\theta}) = (R_3,\hat{\theta})
	\end{cases}
\end{equation}
for all
\[(\hat{\phi},\hat{\psi},\hat{\theta})\in\hat{\mathcal{H}}_F:=H_0^1\times H_*^1\times H_*^1, \]
with inner product
\begin{align*}
	(z,\hat{z})_{\hat{\mathcal{H}}_F}=\rho_1(\phi,\hat\phi)+\rho_2(\psi,\hat\psi)+\rho_3(\theta,\hat\theta)+\kappa(\omega+\hat\kappa)(\phi_x+\psi,\hat\phi_x+\hat\psi)+b(\psi_x,\hat\psi_x)+\beta(\theta_x,\hat\theta_x)
\end{align*}
and norm
$$
\|z\|_{\hat{\mathcal{H}}_F}^2 =\rho_1\|\phi\|^2+\rho_2\|\psi\|^2 +\rho_3\|\theta\|^2+\kappa(\omega+\hat\kappa)\|\phi_x + \psi\|^2++ b\|\psi_{x}\|^2 +\beta\|\theta_{x}\|^2,
$$
where $z = (\phi,\psi,\theta),  \hat{z} = (\hat\phi,\hat\psi,\hat\theta)\in \hat{\mathcal{H}}_F$ and $\langle\cdot,\cdot\rangle$ denotes the duality product between $H^{-1}$ and $H_0^1$.

Summing the equations in \eqref{qju2}, we obtain
\begin{equation*}
	\Lambda((\phi, \psi, \theta), (\hat{\phi}, \hat{\psi}, \hat{\theta}))=F(\hat{\phi}, \hat{\psi}, \hat{\theta}),
\end{equation*}
where $\Lambda$ is a sesquilinear form on $\hat{\mathcal{H}}_F\times \hat{\mathcal{H}}_F$ defined by
\begin{align}\label{qju4}
	\Lambda((\phi, \psi, \theta), (\hat{\phi}, \hat{\psi}, \hat{\theta}))=&\rho_1(\phi,\hat{\phi})+\rho_2(\psi,\hat{\psi}) +\rho_3(\theta,\hat{\theta}) +\kappa(\omega+\hat\kappa)(\phi_x + \psi,\hat{\phi}_x+\hat\psi)\notag\\
	&- \sigma(\theta, \hat{\phi}_x + \hat{\psi}) + b(\psi_{x},\hat{\psi}_x) +\beta(\theta_{x}, \hat{\theta}_x)+ \sigma(\phi_x + \psi,\hat{\theta})
\end{align}
and $F$ is a linear form on $\hat{\mathcal{H}}_F$ defined by
\begin{align*}
	F(({\phi}, {\psi}, {\theta}))=& \langle R_1,{\phi}\rangle+(R_2,{\psi})+(R_3,{\theta})\notag\\
	=&\rho_1(f^1+f^2,\phi)+\kappa\hat\kappa(f^1_x,\phi_x)-\kappa\hat\kappa(f^3_x,\phi)-\kappa\smallint_0^\infty g(s)e^{-s}\left(\smallint_0^s e^\tau(f^6_x(\tau),\phi_x)d\tau\right) ds\notag\\
	&+((\rho_2+\kappa\hat\kappa)f^3+\rho_2f^4,\psi)+\kappa\hat\kappa(f^1_x,\psi)-\kappa\smallint_0^\infty g(s)e^{-s}\left(\smallint_0^s e^\tau(f^6_x(\tau),\psi)d\tau\right) ds\notag\\
	&+(\sigma f^3+\rho_3f^5,\theta)+\sigma(f^1_x,\theta).
\end{align*}
Using H\"older's inequality and \eqref{qju4}, it follows that, for some $c>0$,
\begin{align}\label{qju6}
	\Lambda((\phi, \psi, \theta), (\hat{\phi}, \hat{\psi}, \hat{\theta}))\le&\rho_1\|\phi\|\|\hat{\phi}\|+\rho_2\|\psi\|\|\hat{\psi}\| +\rho_3\|\theta,\hat{\theta}\| +\kappa(\omega+\hat\kappa)(\|\phi_x \|+ \|\psi\|)(\|\hat{\phi}_x\|+\|\hat\psi\|)\notag\\
	&+ \sigma\|\theta\|(\|\hat{\phi}_x\| +\| \hat{\psi}\|) + b\|\psi_{x}\|\|\hat{\psi}_x)\| +\beta\|\theta_{x}\| \|\hat{\theta}_x\|+ \sigma(\|\phi_x\| + \|\psi\|)\|\hat{\theta}\|\notag\\
	\le&c\|(\phi, \psi, \theta)\|_{\hat{\mathcal{H}}_F}\| (\hat{\phi}, \hat{\psi}, \hat{\theta})\|_{\hat{\mathcal{H}}_F}.
\end{align}
Moreover, we get
\begin{align*}
	\operatorname{Re}\Lambda((\phi, \psi, \theta), (\phi, \psi, \theta))=&\rho_1\|\phi\|^2+\rho_2\|\psi\|^2 +\rho_3\|\theta\|^2+\kappa(\omega+\hat\kappa)\|\phi_x + \psi\|^2\notag\\
	&+\operatorname{Re}\left\{\sigma(\phi_x + \psi,{\theta})- \sigma(\theta, {\phi}_x + {\psi})\right\} + b\|\psi_{x}\|^2 +\beta\|\theta_{x}\|^2\notag\\
	=&(\phi, \psi, \theta)_{\hat{\mathcal{H}}_F}^2.
\end{align*}
Hence, $\Lambda$ is bounded and coercive. Also, we have, for some $\hat c>0$
\begin{align*}
	F(({\phi}, {\psi}, {\theta}))\le&\rho_1\|f^1+f^2\|\|\phi\|+\kappa\hat\kappa\|f^1_x\|\|\phi_x\|+\kappa\hat\kappa\|f^3_x\|\|\phi\|+\kappa\left(\smallint_0^\infty g(s)ds\right)^{\frac12}\|f^6\|_{L^2_g}\|\phi_x\|\notag\\
	&+\left((\rho_2+\kappa\hat\kappa)\|f^3\|+\rho_2\|f^4\|\right)\|\psi\|+\kappa\hat\kappa\|f^1_x\|\|\psi\|+\kappa\left(\smallint_0^\infty g(s)ds\right)^{\frac12}\|f^6\|_{L^2_g}\|\psi\|\notag\\
	&+(\sigma\|f^3\|+\rho_3\|f^5\|)\|\theta\|+\sigma\|f^1_x\|\|\theta\|\notag\\
	\le&\hat c\left(\|f^1\|+\|f^1_x\|+\|f^2\|+\|f^3\|+\|f^3_x\|+\|f^4\|+\|f^5\|+\|f^6\|_{L^2_g}\right)(\phi, \psi, \theta)_{\hat{\mathcal{H}}_F},
\end{align*}
which implies that $F$ is bounded, where we have used the following estimates: since by Assumption \ref{g1}, $g(\cdot)$ is nonincreasing, it follows from Cauchy-Schwartz' inequality and changing the order of integration that
\begin{align*}
	\bigg|\smallint_0^\infty& g(s)e^{-s}\left(\smallint_0^s e^\tau(f^6_x(\tau),\phi_x)d\tau\right) ds\bigg|\le\smallint_0^{\infty} g(s)\left(\smallint_0^s e^{\tau-s}\|f^6_x(\tau)\|d\tau\right)ds\|\phi_x\|\\
	=&\smallint_0^{{\infty}}\smallint_\tau^{\infty}g(s)e^{\tau-s}\|f_x^6(\tau)\|dsd\tau\|\phi_x\|\le\smallint_0^{\infty}e^\tau g(\tau)\|f_x^6(\tau)\|\left(\smallint_\tau^{\infty}e^{-s}ds\right)d\tau\|\phi_x\|\\
	=&\smallint_0^{\infty} g(\tau)\|f_x^6(\tau)\|d\tau\|\phi_x\|\le\left(\smallint_0^{\infty} g(\tau)d\tau\right)^{\frac12}\left(\smallint_0^{\infty} g(\tau)\|f_x^6(\tau)\|^2d\tau\right)^{\frac12}\|\phi_x\|\\
	=&\left(\smallint_0^{\infty} g(\tau)d\tau\right)^{\frac12}\|f^6\|_{L^2_g}\|\phi_x\|.
\end{align*}
Similarly,
$$\left|\smallint_0^\infty g(s)e^{-s}\left(\smallint_0^s e^\tau(f^6_x(\tau),\psi)d\tau\right) ds\right|\le\left(\smallint_0^{\infty} g(\tau)d\tau\right)^{\frac12}\|f^6\|_{L^2_g}\|\psi\|.$$

By the Lax-Milgram theorem, there exists unique $(\phi, \psi, \theta) \in \hat{\mathcal{H}}_F=H_0^1 \times H_*^1 \times H_*^1$ satisfying \eqref{qju1} in the sense of \eqref{qju2}. Since $R_2,R_3\in L^2$, by regularity theory for elliptic equations, we get $\psi,\theta\in H^2\cap H_*^1$. Since $f^1\in H_0^1$, we get $f^1$ is absolutely continuous on $[0,L]$ and $f_1(0)=f_1(L)=0$, then we get
\begin{equation}\label{jfw0}
	0=f(L)-f(0)=\smallint_0^L f^1_xds.
\end{equation}
Similarly since $f^6\in L^2_g$, we have $$\smallint_0^L\left(\smallint_0^{\infty}g(s)e^{-s}\left(\smallint_0^se^{\tau}f_x^6(\tau)d\tau\right)dsdx\right ) dx=\smallint_0^{\infty}g(s)e^{-s}\left(\smallint_0^se^{\tau}\left(\smallint_0^Lf_x^6(\tau)dx\right)d\tau\right)ds=0.$$
Moreover, note $f^3\in H_*^1$ and $f^4\in L_*^2$, we get $\smallint_0^L R_2 dx=0$. So we get from \eqref{qju1}$_2$ that
$$\smallint_0^L\psi_{xx}dx=(\rho_2+\kappa(\omega+\hat\kappa))\frac1b\smallint_0^L\psi dx+\kappa(\omega+\hat\kappa)\frac1b\smallint_0^L\phi_x dx-\frac\sigma b\smallint_0^L\theta dx=0,$$
where we have used $\psi,\theta\in H_*^1$ and $\phi\in H_0^1$ with same argument as in \eqref{jfw0}. Similarly, we get from \eqref{qju1}$_3$ $\smallint_0^L\theta_{xx} dx=0$. So $\psi_{xx},\theta_{xx}\in L_*^2$.

In view of \eqref{transf} and $f^1\in H_0^1$ and $f^3\in H_*^1$, we get $\Phi:= \phi - f^1\in H_0^1$ and $\Psi := \psi - f^3\in H_*^1$. Since $f^6\in L^2_g$, we get $$\eta(s):= (1 - e^{-s})(\Phi + \tilde{\Psi}) + \smallint_0^s f^6(\tau)e^{-(s-\tau)}d\tau\in L_g^2$$ and $\eta(0)=0.$ Moreover, $$\eta_s=e^{-s}(\Phi + \tilde{\Psi}) +f^6(s)-\smallint_0^s f^6(\tau)e^{-(s-\tau)}d\tau\in L_g^2,$$ which implies $\eta\in D(\mathbb{L})$.

Considering \eqref{system_A}-(b) and $f^2\in L^2(0,L)$, we have
\begin{equation*}
	\kappa\left[\omega\phi + \smallint_0^\infty g(s)\eta(s)ds\right]_{xx} = -\rho_1f^2+\rho_1\Phi+ \sigma\theta_x-\kappa\omega\psi \in L^2(0,L).
\end{equation*}
That is
\begin{equation*}
	\kappa\left[\omega\phi + \smallint_0^\infty g(s)\eta(s)ds\right]\in H^2(0,L).
\end{equation*}

Therefore,  $(\phi, \Phi, \psi, \Psi, \theta, \eta) \in D(\mathcal{A}_F)$, and then the mapping $I - \mathcal{A}_F:D(\mathcal{A}_F)\mapsto \mathcal{H}_F$ is surjective. By Lumer-Phillips theorem, $\mathcal{A}_F$ generates a $C_0$-contraction semigroup $\left\{ e^{\mathcal{A}_Ft}\right\}_{t\ge 0}$ on $\mathcal{H}_F$, completing the proof of Theorem \ref{wellposed}.

\section{Stability Analysis}\label{sec2.2}
In this section, we will  give the proofs of Theorems \ref{thmFourier} and \ref{thmCattaneo}. The following lemmas will be used throughout our proofs.
\begin{lemma}\cite{MR3637940,MR1681462}\label{hatg}
	Let $f\in L^1(0,\infty)$ and  denote by $\hat{f}$ the half-Fourier transform of $f$, i.e.,
	$$\hat{f}(\lambda)=\smallint_0^{\infty}e^{-i\lambda s}f(s)ds,$$
	Then,
	$$\lim_{|\lambda|\rightarrow\infty}\hat{f}(\lambda)=0,$$
	i.e.,
	\begin{equation}\label{lim00}
		\lim_{|\lambda|\rightarrow\infty}\smallint_0^\infty f(s)\cos(\lambda s)ds=\lim_{|\lambda|\rightarrow\infty}\smallint_0^\infty f(s)\sin(\lambda s)ds=0.
	\end{equation}
	Moreover, if $f$ is absolutely continuous on $(0,\infty)$, $f'\in L^1(0,\infty)$ and $\lim_{x\to0} f(x)=f_0\in\mathbb{R}$, then
	$$\lim_{|\lambda|\rightarrow\infty}\lambda\hat{f}(\lambda)=-if_0.$$
\end{lemma}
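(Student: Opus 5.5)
The statement to prove is Lemma~\ref{hatg}, a Riemann--Lebesgue type result for the half-Fourier transform. The plan has two parts.

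\textbf{First claim.} I would use density. For a step function $f=\mathbf{1}_{[a,b)}$ with $0\le a<b<\infty$, a direct computation gives
$$\hat f(\lambda)=\frac{e^{-i\lambda a}-e^{-i\lambda b}}{i\lambda},$$
so $|\hat f(\lambda)|\le 2/|\lambda|\to0$. By linearity the same decay holds for every finite linear combination of such indicators. These combinations are dense in $L^1(0,\infty)$. For general $f$ and $\varepsilon>0$, choose such a step function $h$ with $\|f-h\|_{L^1}<\varepsilon$. Since
$$|\hat f(\lambda)-\hat h(\lambda)|\le\|f-h\|_{L^1}$$
uniformly in $\lambda$, we get $\limsup_{|\lambda|\to\infty}|\hat f(\lambda)|\le\varepsilon$. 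Letting $\varepsilon\to0$ gives $\hat f(\lambda)\to0$. The real and imaginary parts of $\hat f(\lambda)$ are $\int_0^\infty f\cos(\lambda s)\,ds$ and $-\int_0^\infty f\sin(\lambda s)\,ds$, so both tend to zero, which is \eqref{lim00}.

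\textbf{Second claim.} I would integrate by parts on $[\epsilon,R]$ and then pass to the limits. Since $f$ is absolutely continuous,
$$\int_\epsilon^R e^{-i\lambda s}f(s)\,ds=\Big[\frac{e^{-i\lambda s}}{-i\lambda}f(s)\Big]_\epsilon^R+\frac1{i\lambda}\int_\epsilon^R e^{-i\lambda s}f'(s)\,ds.$$

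\emph{Boundary term at $R$.} Because $f'\in L^1(0,\infty)$, the limit $f(\infty):=\lim_{R\to\infty}f(R)$ exists, since $f(R)=f(1)+\int_1^R f'$. Because $f\in L^1(0,\infty)$, this limit must equal $0$. So the contribution at $R$ vanishes as $R\to\infty$.

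\emph{Boundary term at $\epsilon$.} As $\epsilon\to0$ this term tends to $f_0/(i\lambda)$, by the hypothesis $\lim_{x\to0}f(x)=f_0$.

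\emph{Passing to the limit.} Dominated convergence applies to both integrals, because $f,f'\in L^1$. This yields
$$\lambda\hat f(\lambda)=\frac{f_0}{i}+\frac1i\,\widehat{f'}(\lambda)=-if_0-i\,\widehat{f'}(\lambda).$$
Applying the first claim to $f'\in L^1$ gives $\widehat{f'}(\lambda)\to0$, hence $\lambda\hat f(\lambda)\to-if_0$.

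\textbf{Main obstacle.} The only delicate point is justifying the boundary terms when $f$ is absolutely continuous only on $(0,\infty)$ rather than on $[0,\infty)$. This is why I integrate on $[\epsilon,R]$ first. I then use the existence of $\lim_{x\to0}f(x)$, and at infinity the combination $f'\in L^1$ with $f\in L^1$, to remove both boundary contributions.
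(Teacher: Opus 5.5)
Your proof is correct: the step-function density argument gives the Riemann--Lebesgue part, and integrating by parts on $[\epsilon,R]$ yields $\lambda\hat f(\lambda)=-if_0-i\widehat{f'}(\lambda)$, where the boundary term at infinity drops out because $f,f'\in L^1$ force $f(R)\to0$. The paper gives no proof of this lemma (it only cites the references), and yours is the standard argument; the one small caveat is that reading the two integrals in \eqref{lim00} off the real and imaginary parts of $\hat f(\lambda)$ assumes $f$ is real-valued, which holds for the kernel $g$, whereas for complex $f$ you would use $\hat f(\lambda)\pm\hat f(-\lambda)$ instead.
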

\begin{lemma}\cite{MR4704637}\label{Ig}
	Let $g$ satisfy Assumption \ref{g1}. Then there exists a constant $\alpha>0$ such that the set
	$$ J=\{s\in\mathbb{R}^+:\alpha g'(s)+g(s)<0\}$$
	has positive Lebesgue measure and
	$$I_g(\lambda):=\smallint_0^{\infty}\tilde{g}(s)(1-\cos(\lambda s))ds>0$$
	where
	\[\tilde{g}(s):=g(s)\chi_J(s)=\begin{cases}
		g(s)\quad&\hbox{ if }s\in J,\\
		0&\hbox{ if }s\notin J.
	\end{cases}
	\]		
\end{lemma}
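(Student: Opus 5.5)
The plan is to argue by contradiction for the first claim, using the monotonicity that a nonnegative quantity $\alpha g'+g$ forces on $e^{s/\alpha}g(s)$. The second claim then follows from a pointwise positivity argument on $J$. Throughout, $g'$ exists a.e. and is locally integrable because $g$ is absolutely continuous (Assumption \ref{g1}). So $J$ is a well-defined measurable set, up to a null set.

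\emph{Existence of $\alpha$.} Suppose, to the contrary, that for every $\alpha>0$ the set $J_\alpha=\{s:\alpha g'(s)+g(s)<0\}$ has Lebesgue measure zero. Then $\alpha g'+g\ge 0$ a.e. on $\mathbb{R}^+$.
\begin{itemize}
\item For fixed $\alpha$, the function $h_\alpha(s)=e^{s/\alpha}g(s)$ is absolutely continuous on compact intervals.
\item Its derivative is $h_\alpha'(s)=\alpha^{-1}e^{s/\alpha}\big(\alpha g'(s)+g(s)\big)\ge 0$ a.e.
\item Hence $h_\alpha$ is nondecreasing, which gives $g(s)\ge g(0)e^{-s/\alpha}$ for all $s\ge0$.
\item Fix $s$ and let $\alpha\to\infty$. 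This yields $g(s)\ge g(0)>0$ for every $s\ge 0$.
\end{itemize}
A function bounded below by the positive constant $g(0)$ cannot be summable on $(0,\infty)$, contradicting $\ell=\int_0^\infty g(s)\,ds<1$. So some $\alpha>0$ gives $|J|>0$. Combined with the fact that $g$ is nonincreasing, this is the key observation: it rules out $g$ decaying more slowly than every exponential in the a.e. sense along all of $\mathbb{R}^+$.

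\emph{Positivity of $I_g$.} With this $\alpha$, set $\tilde g=g\chi_J$ and note that
$$0\le \tilde g(s)\big(1-\cos(\lambda s)\big)\le 2g(s).$$
Therefore $I_g(\lambda)$ is finite, with $I_g(\lambda)\le 2\ell$. For $\lambda\neq 0$, the zero set of $1-\cos(\lambda s)$ is the discrete set $\{2k\pi/|\lambda|\}$, which has measure zero. Hence the integrand is strictly positive on $J$ minus a null set, because $g>0$ everywhere. Since $|J|>0$, it follows that $I_g(\lambda)>0$.

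\emph{Main obstacle.} The only delicate point is the case $\lambda=0$, where $I_g(0)=0$. The statement must therefore be read as holding for $\lambda\in\mathbb{R}\setminus\{0\}$, which is how it will be used in the resolvent estimates as $|\lambda|\to\infty$. Apart from this, the argument relies only on absolute continuity of $h_\alpha$, so that a.e. nonnegativity of its derivative implies monotonicity. That property holds because $e^{s/\alpha}$ is smooth and $g$ is absolutely continuous.
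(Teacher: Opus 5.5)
Your proof is correct. The paper gives no proof of this lemma and simply imports it from \cite{MR4704637}, so your contradiction argument serves as a valid self-contained proof. It is the standard one: assuming $\alpha g'+g\ge0$ a.e. for every $\alpha>0$, you use the integrating factor $e^{s/\alpha}$ to get $g\ge g(0)>0$, which contradicts summability. Positivity of $I_g$ then follows from the integrand being positive on $J$ minus a countable set.

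Your caveat is also right. $I_g(0)=0$, so the positivity claim holds only for $\lambda\neq0$, and that is exactly how the paper uses it. In Theorem \ref{thmboundresolvent}, $1/I_g(\lambda)$ appears alongside $1/|\lambda|$, and the case $\lambda_*=0$ in Theorem \ref{thmap} is handled separately, without $I_g$.
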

\begin{lemma}\cite{MR933321}\label{Thm:SpectralApprox}
	Let $\mathcal{A}$ be an infinitesimal generator of a $C_0$-semigroup on a reflexive Banach space $X$. Then
	$$
	\sigma(\mathcal{A}) \cap i\mathbb{R} = \sigma_{ap}(\mathcal{A}) \cap i\mathbb{R},
	$$
	where $\sigma(\mathcal{A})$ and $\sigma_{ap}(\mathcal{A})$ denote the spectrum set and approximate spectrum set of operator $\mathcal{A}$, respectively.
\end{lemma}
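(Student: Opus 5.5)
The inclusion $\sigma_{ap}(\mathcal{A})\cap i\mathbb{R}\subset\sigma(\mathcal{A})\cap i\mathbb{R}$ is immediate, since approximate eigenvalues always lie in the spectrum. For the reverse inclusion I fix $\lambda=i\mu\in\sigma(\mathcal{A})\cap i\mathbb{R}$ and split according to how $\lambda-\mathcal{A}$ fails to be invertible. Recall that $\lambda\in\sigma_{ap}(\mathcal{A})$ means there are unit vectors $x_n\in D(\mathcal{A})$ with $\|(\lambda-\mathcal{A})x_n\|\to0$.

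\textbf{Case 1: $\lambda-\mathcal{A}$ is not bounded below.} Then no constant $c>0$ satisfies $\|(\lambda-\mathcal{A})x\|\ge c\|x\|$ on $D(\mathcal{A})$. This directly produces an approximate eigensequence, so $\lambda\in\sigma_{ap}(\mathcal{A})$; this covers in particular the case where $\lambda$ is an eigenvalue.

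\textbf{Case 2: $\lambda-\mathcal{A}$ is bounded below.} Because $\mathcal{A}$ is closed, a bounded below $\lambda-\mathcal{A}$ has closed range. Since $\lambda\in\sigma(\mathcal{A})$, the range must then be a proper closed subspace. By Hahn--Banach there is $x^*\neq0$ annihilating $\mathrm{Ran}(\lambda-\mathcal{A})$, i.e.\ $\mathcal{A}^*x^*=\lambda x^*$.

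Reflexivity of $X$ is used at this point. On a reflexive space the adjoint semigroup $T(t)^*$ is again $C_0$ with generator $\mathcal{A}^*$. Hence $T(t)^*x^*=e^{\lambda t}x^*$, and the spectra of $\mathcal{A}$ and $\mathcal{A}^*$ coincide, with $\sigma_{ap}(\mathcal{A})\supset\partial\sigma(\mathcal{A})$. The goal is to show Case 2 leads to an approximate eigenvector of $\mathcal{A}$ itself. I would do this by proving that $\lambda$ is a \emph{boundary} point of $\sigma(\mathcal{A})$. Then the standard estimate $\|R(\nu,\mathcal{A})\|\ge 1/\mathrm{dist}(\nu,\sigma(\mathcal{A}))$, applied along $\nu_n\to\lambda$ in the resolvent set, gives unit vectors $y_n$ with $\|R(\nu_n,\mathcal{A})y_n\|\to\infty$. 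Normalizing $x_n=R(\nu_n,\mathcal{A})y_n/\|R(\nu_n,\mathcal{A})y_n\|$ yields
\[
(\lambda-\mathcal{A})x_n=(\lambda-\nu_n)x_n+\frac{y_n}{\|R(\nu_n,\mathcal{A})y_n\|}\longrightarrow0 .
\]
This contradicts boundedness below, so Case 2 cannot occur.

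\textbf{Main obstacle.} The hardest step is showing that $\lambda=i\mu$ lies on $\partial\sigma(\mathcal{A})$, i.e.\ that points $\lambda+\varepsilon$, $\varepsilon>0$ small, approach $\lambda$ from the resolvent set. My first attempt would combine the adjoint eigenrelation $T(t)^*x^*=e^{i\mu t}x^*$ with the Laplace-transform representation of the resolvent. From $\langle R(\lambda+\varepsilon,\mathcal{A})x,x^*\rangle=\varepsilon^{-1}\langle x,x^*\rangle$, I would try to force $\|R(\lambda+\varepsilon,\mathcal{A})\|\ge\varepsilon^{-1}$, placing $\lambda+\varepsilon$ arbitrarily close to the spectrum. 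If needed, I would fall back on the growth bound $\|T(t)\|\le Me^{\omega t}$ and the half-plane $\{\mathrm{Re}\,\nu>\omega\}\subset\rho(\mathcal{A})$, moving along the horizontal line $\mathrm{Im}\,\nu=\mu$ to the first spectral point.

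For the application in this paper only contraction semigroups arise (Theorem \ref{wellposed}). There the half-plane $\mathrm{Re}\,\nu>0$ is resolvent, so every point of $\sigma(\mathcal{A})\cap i\mathbb{R}$ is automatically a boundary point and the argument above closes directly.
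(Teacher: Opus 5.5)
The step you call the main obstacle cannot be carried out. For an arbitrary $C_0$-semigroup it is false, and so is the lemma as stated.

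Take $X=\ell^2(\mathbb{N})$ and $\mathcal{A}=S$, the unilateral right shift. It is bounded, so it generates the $C_0$-group $e^{tS}$; this group satisfies $\|e^{tS}\|\le e^{t}$ but is not bounded. One has $\sigma(S)=\overline{\mathbb{D}}$. But $S$ is an isometry, so $\|(\nu-S)x\|\ge(1-|\nu|)\|x\|$ for $|\nu|<1$, and therefore $\sigma_{ap}(S)=\partial\mathbb{D}$. Hence $\sigma(S)\cap i\mathbb{R}=\{i\mu:|\mu|\le 1\}$, while $\sigma_{ap}(S)\cap i\mathbb{R}=\{\pm i\}$.

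The point $\lambda=0$ is exactly your Case 2: $\mathrm{Ran}\,S$ has codimension one, $S^*e_1=0$ and $e^{tS^*}e_1=e_1$. Everything you derive from reflexivity and the adjoint semigroup holds, yet $0$ is an interior point of $\sigma(S)$.

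Your two concrete routes fail for this reason. The identity $\langle R(\lambda+\varepsilon,\mathcal{A})x,x^*\rangle=\varepsilon^{-1}\langle x,x^*\rangle$ already presupposes $\lambda+\varepsilon\in\rho(\mathcal{A})$, which is the very thing to be shown; in the example $\varepsilon\in\sigma(S)$ for $0<\varepsilon\le 1$. Sliding left along $\mathrm{Im}\,\nu=\mu$ from the half-plane $\mathrm{Re}\,\nu>\omega$ only reaches the boundary point $\lambda+r_0$, with $r_0=\max\{r\ge 0:\lambda+r\in\sigma(\mathcal{A})\}$. That point need not be $\lambda$; here $r_0=1$.

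What is true is the version for bounded semigroups, and that is all the paper needs, since $\mathcal{A}_F$ and $\mathcal{A}_C$ generate contraction semigroups (Theorem~\ref{wellposed}). There $\{\mathrm{Re}\,\nu>0\}\subset\rho(\mathcal{A})$, so every $\lambda\in\sigma(\mathcal{A})\cap i\mathbb{R}$ lies on $\partial\sigma(\mathcal{A})$. Your normalization $x_n=R(\nu_n,\mathcal{A})y_n/\|R(\nu_n,\mathcal{A})y_n\|$ with $\nu_n=\lambda+1/n$ then gives an approximate eigensequence directly. That argument is correct and complete, and it needs neither the Case 1/Case 2 split, nor the Hahn--Banach functional, nor reflexivity.

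So your closing paragraph should be the whole proof, for a lemma whose hypothesis is corrected to a bounded (e.g.\ contraction) $C_0$-semigroup. The paper gives no proof of its own here, only a literature citation. The missing boundedness hypothesis is therefore a defect of the stated lemma, not of its use in Theorem~\ref{thmap}, where only contraction semigroups occur.
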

\begin{lemma}\cite{MR4396027} \label{lemggj}
	Let $g$ satisfy Assumption \ref{g1} and  the ${\delta}$-condition \eqref{delta c}. Then there exists a positive constant $C$ such that for every $\eta\in L_g^2$,
	\begin{equation}\label{eta_x}
		\smallint_0^{\infty}g(s)\left(\smallint_0^s\|\eta_x(\tau)\|d\tau\right)^2ds\leq C\|\eta\|_{L_g^2}^2.
	\end{equation}
\end{lemma}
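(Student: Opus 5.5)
The inequality \eqref{eta_x} is a weighted Hardy-type estimate. I would prove it directly from the $\delta$-condition \eqref{delta c}, using one Cauchy--Schwarz step and one change in the order of integration; Assumption \ref{g1} is used only to make sense of the quantities involved.

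Write $a(\tau):=\|\eta_x(\tau)\|$, so that $\|\eta\|_{L_g^2}^2=\smallint_0^\infty g(\tau)a(\tau)^2\,d\tau$. Applying \eqref{delta c} with base point $\tau$ and shift $t=s-\tau>0$ gives $g(s)\le Ce^{-\delta(s-\tau)}g(\tau)$ for every $s>\tau$ and a.e.\ $\tau>0$. Taking square roots and bringing $g(s)^{1/2}$ inside the inner integral yields
\begin{equation*}
g(s)\left(\smallint_0^s a(\tau)\,d\tau\right)^2=\left(\smallint_0^s g(s)^{\frac12}a(\tau)\,d\tau\right)^2\le C\left(\smallint_0^s e^{-\frac{\delta}{2}(s-\tau)}g(\tau)^{\frac12}a(\tau)\,d\tau\right)^2 .
\end{equation*}

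Next I split the exponential weight as $e^{-\frac{\delta}{4}(s-\tau)}\cdot e^{-\frac{\delta}{4}(s-\tau)}g(\tau)^{\frac12}a(\tau)$ and apply Cauchy--Schwarz in $\tau$. The right-hand side is then bounded by
\begin{equation*}
C\left(\smallint_0^s e^{-\frac{\delta}{2}(s-\tau)}\,d\tau\right)\left(\smallint_0^s e^{-\frac{\delta}{2}(s-\tau)}g(\tau)a(\tau)^2\,d\tau\right)\le \frac{2C}{\delta}\smallint_0^s e^{-\frac{\delta}{2}(s-\tau)}g(\tau)a(\tau)^2\,d\tau .
\end{equation*}

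Finally I integrate in $s$ over $(0,\infty)$. Since the integrand is nonnegative, Tonelli's theorem lets me swap the order of integration, and the inner integral is $\smallint_\tau^\infty e^{-\frac{\delta}{2}(s-\tau)}\,ds=\frac{2}{\delta}$. This gives
\begin{equation*}
\smallint_0^\infty g(s)\left(\smallint_0^s \|\eta_x(\tau)\|\,d\tau\right)^2 ds\le \frac{4C}{\delta^2}\smallint_0^\infty g(\tau)\|\eta_x(\tau)\|^2\,d\tau=\frac{4C}{\delta^2}\|\eta\|_{L_g^2}^2 ,
\end{equation*}
which is \eqref{eta_x} with constant $4C/\delta^2$.

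The only delicate point is the ``a.e.'' qualifier in \eqref{delta c}. The bound $g(s)\le Ce^{-\delta(s-\tau)}g(\tau)$ is needed for a.e.\ $\tau$ and all $s>\tau$, which is exactly the form in which \eqref{delta c} is stated. Because $g$ is continuous (absolutely continuous by Assumption \ref{g1}), the exceptional null set of $\tau$ can be removed by continuity if desired. In any case, a null set of $\tau$ does not affect the $\tau$-integrals above, so the argument goes through unchanged.
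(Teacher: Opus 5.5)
Your proof is correct. The pointwise bound $g(s)^{1/2}\le C^{1/2}e^{-\delta(s-\tau)/2}g(\tau)^{1/2}$ for $0<\tau<s$, Cauchy--Schwarz with the split exponential weight, and Tonelli together give the inequality with the explicit constant $4C/\delta^2$. Only the $\delta$-condition and the continuity of $g$ are really used.

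On the ``a.e.'' qualifier, the continuity argument is the one that settles it. For each fixed $t>0$, the set of $s$ where the $\delta$-condition fails is open by continuity and null, hence empty. So the bound holds for all $0<\tau<s$. Your fallback remark, that a null set of $\tau$ is harmless, tacitly assumes the exceptional set does not depend on $t$. The stated quantifier order does not guarantee this by itself, and without continuity you would need a Fubini argument in $(s,\tau)$.

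The paper gives no proof of this lemma; it simply quotes it from the cited reference. Your argument therefore supplies a self-contained justification. It also covers the paper's other use of the lemma, in Case 1 of the proof of Theorem \ref{thmap}. There the lemma is applied to $\eta_s$ for $\eta\in D(\mathbb{L})$, via $\|\eta_x(s)\|\le\smallint_0^s\|\eta_{xs}(\tau)\|\,d\tau$, which holds because $\eta(\cdot,0)=0$.
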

\subsection{Boundedness of the Resolvent Operator }\label{sec2.2.1}
The main result of this part is the following theorem.
\begin{theorem}\label{thmboundresolvent}
	Let $g$ satisfy the Assumption \ref{g1} and the $\delta$-condition \eqref{delta c}.
	\begin{enumerate}
		\item If $i\mathbb{R}\subset\rho(\mathcal{A}_F)$, then there exists a positive constant $C$, independent of $\lambda$, such that
		\begin{align*}
			\left\|(i\lambda I-\mathcal{A}_F)^{-1}\right\|_{\mathcal{L}(\mathcal{H}_F)}\le& C\left(\frac{1}{I_g(\lambda)}+1\right)\left(\frac{1}{I_g(\lambda)}+\frac{1}{|\lambda|}+1\right) \\&+C|\chi_0|(1+|\lambda|^2)\left(\frac{1}{I_g(\lambda)}+1\right)\left(\frac{1}{I_g(\lambda)}+\frac{1}{|\lambda|}+1\right).
		\end{align*}
		\item If $i\mathbb{R}\subset\rho(\mathcal{A}_C)$, then there exists a positive constant $C$, independent of $\lambda$, such that
		\begin{align*}
			\left\|(i\lambda I-\mathcal{A}_C)^{-1}\right\|_{\mathcal{L}(\mathcal{H}_C)}\le& C\left(\frac{1}{I_g(\lambda)}+1\right)\left(\frac{1}{I_g(\lambda)}+\frac{1}{|\lambda|}+1\right) \\&+C|\chi_0|(1+|\lambda|^2)\left(\frac{1}{I_g(\lambda)}+1\right)\left(\frac{1}{I_g(\lambda)}+\frac{1}{|\lambda|}+1\right)+C|\lambda|^2.
		\end{align*}
		\item If $i\mathbb{R}\subset\rho(\mathcal{A}_C)$ and $\rho_2-b\rho_3\tau\neq0$, then there exists a positive constant $C$, independent of $\lambda$, such that
		\begin{align*}
			\left\|(i\lambda I-\mathcal{A}_C)^{-1}\right\|_{\mathcal{L}(\mathcal{H}_C)}\le& C\left(\frac{1}{I_g(\lambda)}+1\right)\left(\frac{1}{I_g(\lambda)}+\frac{1}{|\lambda|}+1\right) \\&+C|\chi_1||\lambda|^2\left(\frac{1}{I_g(\lambda)}+1\right)\left(\frac{1}{I_g(\lambda)}+\frac{1}{|\lambda|}+1\right).
		\end{align*}
	\end{enumerate}
	Here, $\chi_0$ is the constant defined in \eqref{chi0},  $\chi_1$ is defined in \eqref{chi1}, and $I_g(\lambda)$ is defined in Lemma \ref{Ig}.
\end{theorem}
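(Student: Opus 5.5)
The argument is a resolvent estimate carried out by the multiplier method on the resolvent equation. Fix $\lambda\in\mathbb{R}$, let $F\in\mathcal{H}_F$ (resp.\ $\mathcal{H}_C$), and let $z=(i\lambda I-\mathcal{A})^{-1}F$. The components of $i\lambda z-\mathcal{A}z=F$ give $i\lambda\phi-\Phi=f^1$, $i\lambda\psi-\Psi=f^3$, the two beam equations, the heat equation(s), and $i\lambda\eta+\eta_s-(\Phi+\tilde\Psi)=f^6$. The aim is to bound $\|z\|^2$ by $\|z\|\|F\|$ times the factors in the statement. The estimates are built up in the following order.

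\emph{Step 1 (dissipation).} Taking the real part of the inner product with $z$ and using \eqref{4} (and its Cattaneo analogue $-\beta\|q\|^2$) gives
\[
\beta\|\theta_x\|^2\ (\text{resp. }\beta\|q\|^2)+\frac{\kappa}{2}\int_0^\infty(-g')\|\eta_x\|^2\,ds\le\|z\|\|F\|.
\]
To pass from $-g'$ to $\|\eta\|_{L^2_g}^2$, use the $\delta$-condition (which gives $-g'\ge\delta g$ a.e.\ in the usual sense), so that $\|\eta\|_{L_g^2}^2\lesssim\|z\|\|F\|$. In the Cattaneo case, the fourth equation yields $\|\theta_x\|\lesssim|\lambda|\|q\|+\|F\|$.

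\emph{Step 2 (shear strain).} Solve the $\eta$-equation explicitly:
\[
\eta(s)=\frac{1-e^{-i\lambda s}}{i\lambda}(\Phi+\tilde\Psi)+\int_0^s e^{-i\lambda(s-r)}f^6(r)\,dr .
\]
Multiply by $\tilde g(s)$ and integrate, using $i\lambda\phi-\Phi=f^1$ to rewrite $\Phi+\tilde\Psi$ through $i\lambda(\phi+\tilde\psi)$. Taking $x$-derivatives, this gives $\|\phi_x+\psi\|^2 I_g(\lambda)\lesssim \|\eta\|_{L^2_g}\|\phi_x+\psi\|+$ (terms in $F$), where Lemma \ref{lemggj} controls the $f^6$ integral. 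The factor $1/I_g(\lambda)$ comes from this step, and together with Step 1 it gives $\|\phi_x+\psi\|^2\lesssim(1/I_g+1)\|z\|\|F\|$. Next, multiply the first beam equation by $\overline{\int_0^x(\ldots)}$, or more simply test against $\rho_1\bar\phi$, to trade $\rho_1\|\Phi\|^2$ against $\kappa\omega\|\phi_x+\psi\|^2$, the memory term and $\theta$. Control $\|\theta\|$ by Poincar\'e in $L^2_*$ using $\theta_x$ (Fourier), or by testing the heat equation against $\int_0^x\bar\theta$ (Cattaneo). The $1/|\lambda|$ term appears here from dividing by $\lambda$.

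\emph{Step 3 (rotation and the coupling term; main obstacle).} Testing the second beam equation with $\bar\psi$ bounds $\rho_2\|\Psi\|^2-b\|\psi_x\|^2$. To bound $\|\psi_x\|^2$ itself, multiply the first equation by $\overline{\psi_x}$-type multipliers and the second by $\overline{(\phi_x+\psi)}$, then subtract. The resulting mixed term is
\[
\Big(\frac{\kappa}{\rho_1}-\frac{b}{\rho_2}\Big)\rho_1\rho_2\,(\Phi_x,\Psi)\sim\chi_0\,\lambda^2(\ldots),
\]
which after using the resolvent relations produces the $|\chi_0|(1+|\lambda|^2)$ term. This term vanishes when $\chi_0=0$. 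For Cattaneo (item 2), the heat-flux contributions $\sigma(\theta,\cdot)$ cannot be absorbed as in the Fourier case, because $\theta_x$ is only bounded by $|\lambda|\|q\|$. Estimating them crudely gives the additional $C|\lambda|^2$. For item 3, instead of estimating $\theta$ separately, eliminate it: replace $\theta_x$ by $-\tau i\lambda q-\beta q+$ data, and use the heat equation to write $i\lambda\theta=-(q_x+\sigma(\Phi_x+\Psi))/\rho_3$. Combining the beam multiplier with $\tau\times$ (heat-flux multiplier $\bar q$), the $\lambda^2$ mixed terms collect exactly into the coefficient $\chi_1=\chi_0-\frac{\tau b}{\rho_1\rho_2}(\rho_1\rho_3\chi_0+\sigma^2)$. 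The hypothesis $\rho_2-b\rho_3\tau\ne0$ is needed to solve for the $(\theta,\Psi)$ cross term when making this substitution. Checking that the coefficients indeed combine into $\chi_1$ is the delicate computation, and I would do it by first treating the exact algebraic identity with $F=0$ and then adding the data terms.

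\emph{Step 4 (assembly).} Sum the estimates, use Young's inequality to absorb $\varepsilon\|z\|^2$, and use $i\mathbb{R}\subset\rho(\mathcal{A})$ only to guarantee that the resolvent exists. The result is $\|z\|\le C(\cdots)\|F\|$ with the stated factors.
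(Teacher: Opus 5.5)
Your overall architecture is the right one: dissipation, the explicit $\eta$-representation with $\tilde g$ and $I_g$, equipartition, a wave-speed mismatch term costing $|\lambda|^2$, and the crude bound on $i\lambda\tau(q,\psi_x)$ producing the extra $C|\lambda|^2$ in part 2. There are, however, two genuine gaps.

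\textbf{Gap 1: Step 1 and the missing $\|\eta\|_{L^2_g}$ step.} Step 1 uses an implication the hypothesis does not give. The condition \eqref{delta c} with $C\ge 1$ does not imply $-g'\ge\delta g$; that holds only when $C=1$. For instance, a piecewise linear kernel that is constant on each $[n,n+\frac12]$ and comparable to $e^{-s}$ satisfies \eqref{delta c}, yet $g'=0$ on a set of positive measure. So the dissipation does not control $\|\eta\|_{L^2_g}$. It controls only $\|\eta\|^2_{L^2_{\tilde g}}\le C\|z\|\|F\|$, because $g<-\alpha g'$ on the set $J$ of Lemma \ref{Ig}; this is the whole reason $\tilde g$ and $I_g$ appear.

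Consequently, in Step 2 the right-hand side must be $\|\eta\|_{L^2_{\tilde g}}$. Young's inequality then gives $\|\phi_x+\psi\|^2\le\frac{C}{I_g(\lambda)}\bigl(\frac{1}{I_g(\lambda)}+\frac{1}{|\lambda|}+1\bigr)\|z\|\|F\|$, not $(1/I_g+1)$. The $1/|\lambda|$ comes from the $f^1,f^3$ terms in the representation of $\eta$, not from the $\Phi$ estimate.

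You are also missing a step. The full $\|\eta\|_{L^2_g}$, which all later estimates use, must be recovered afterwards. Test the explicit formula for $\eta$ against $\eta$ in $L^2_g$ and bound the $f^6$ term with Lemma \ref{lemggj}. This gives $\|\eta\|^2_{L^2_g}\le C\|\phi_x+\psi\|^2+C\bigl(\frac{1}{|\lambda|}+1\bigr)\|z\|\|F\|$. That is where the $\delta$-condition is really used, and it is the source of the product $\bigl(\frac{1}{I_g}+1\bigr)\bigl(\frac{1}{I_g}+\frac{1}{|\lambda|}+1\bigr)$ in the statement.

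\textbf{Gap 2: the multipliers in Step 3 and the mechanism for part 3.} The multipliers you name do not produce $\chi_0$ once the shear force carries memory. Testing the $\psi$-equation with $\phi_x+\psi$ and the $\phi$-equation with $\psi_x$ leaves $\bigl(\frac{\kappa\omega}{\rho_1}-\frac{b}{\rho_2}\bigr)(\phi_x+\psi,\psi_{xx})$ plus $\frac{\kappa}{\rho_1}\bigl(\int_0^\infty g\eta_x\,ds,\psi_{xx}\bigr)$. The latter, estimated through the $\psi$-equation, costs $|\lambda|\|\eta\|_{L^2_g}\|\Psi\|$, i.e.\ a factor $|\lambda|^2$ even when $\chi_0=0$, which would ruin part 1. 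The multiplier must be the full stress $S=\omega(\phi_x+\psi)+\int_0^\infty g\eta_x\,ds$. You also need the $\eta$-equation tested against $\tilde\Psi$ in $L^2_g$, so that $\omega+\ell=1$ yields $\|\Psi\|^2$ with coefficient one. The term $\int_0^\infty g(\eta_{sx},\Psi)\,ds=-\int_0^\infty g'(\eta_x,\Psi)\,ds$ is then bounded via $g(0)<\infty$ and the dissipation. Only then is the leftover exactly $\chi_0(S,\psi_{xx})$.

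For part 3 you assert the outcome rather than supply the mechanism. The ingredients you list are right: substituting $\theta_x$ from the Cattaneo law and linking $i\lambda\theta$ with $q_x$ through the energy equation. But there is no $\bar q$ multiplier, and the closing step is the real content. The chain is:
\begin{enumerate}
\item Write $\sigma(\theta_x,\psi_x)=-\sigma\tau(i\lambda q,\psi_x)+\dots$ using the Cattaneo law.
\item Move $i\lambda$ onto $\psi$, using $q\in H_0^1$, to get $(q_x,\Psi+f^3)$.
\item Express $(q_x,\Psi)=\rho_3(\theta,i\lambda\Psi)-\sigma(\Phi_x,\Psi)-\sigma\|\Psi\|^2+\dots$ from the energy equation.
\item Split $\sigma\theta=(\sigma\theta-\kappa S)+\kappa S$ and replace $i\lambda\Psi$ by the $\psi$-equation in the first piece.
\item Use the $\phi$-equation to turn $b(\kappa S_x-\sigma\theta_x,\psi_x)$ back into $(\Phi,\Psi_x)$.
\end{enumerate}
This closes a loop with coefficient $1+\frac{\sigma^2\tau}{\rho_1}-\frac{b\rho_3\tau}{\rho_2}$ on $(\Phi,\Psi_x)$. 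Substituting into the $\|\Psi\|^2$ identity leaves $\bigl(1-\frac{b\rho_3\tau}{\rho_2}\bigr)\|\Psi\|^2$ on the left. This coefficient, not a $(\theta,\Psi)$ cross term, is where $\rho_2\neq b\rho_3\tau$ enters. The $\frac{\kappa\rho_3\tau}{\rho_1}(S,i\lambda\Psi)$ contributions cancel, and the only growing term is $\frac{\rho_2\chi_1}{b}(S,i\lambda\Psi)$.

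Finally, $\theta$ is not eliminated. The leftover $\|\kappa S-\sigma\theta\|^2$ and $(S,\theta)$ terms still need the separate bound $\|\theta\|^2\le\epsilon\|z\|^2+C_\epsilon\|z\|\|F\|$ obtained from the integrated Cattaneo law.
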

\begin{proof}[Proof of the conclusion 1 of Theorem \ref{thmboundresolvent}]
	Let $f=(f^1,f^2,f^3,f^4,f^5,f^6)\in \mathcal{H}_F$ be given. Since we assumed $i\mathbb{R}\subset\rho(\mathcal{A}_F)$, we can let $z = (\phi,\Phi,\psi,\Psi,\theta,\eta):=(i\lambda I-\mathcal{A}_F)^{-1}f\in D(\mathcal{A}_F)$, i.e.,
	\begin{equation}\label{2F}
		\begin{cases}
			\text{(a):~}&i\lambda\phi-\Phi=f^1,\\ \text{(b):~}&i\lambda\rho_1\Phi-\kappa[\omega(\phi_x+\psi)+\smallint_0^{\infty}g(s)\eta_x(s)ds]_x+\sigma\theta_x=\rho_1f^2,\\
			\text{(c):~}&i\lambda\psi-\Psi=f^3,\\ \text{(d):~}&i\lambda\rho_2\Psi-b\psi_{xx}+\kappa[\omega(\phi_x+\psi)+\smallint_0^{\infty}g(s)\eta_x(s)ds]-\sigma\theta=\rho_2f^4,\\ \text{(e):~}&i\lambda\rho_3\theta-\beta\theta_{xx}+\sigma(\Phi_x+\Psi)=\rho_3f^5,\\
			\text{(f):~}&i\lambda\eta+\eta_s-(\Phi+\tilde{\Psi})=f^6.
		\end{cases}
	\end{equation}
	
	Next, we will prove the conclusion 1 of Theorem \ref{thmboundresolvent} by several steps. In the proof, we let $C$ be a general positive constant independent of $\lambda$.
	
	\noindent\textbf{Step 1:} there holds
	\begin{align}\label{b2}
		\|\theta_x\|^2,~\smallint_0^{\infty}[-g'(s)]\|\eta_x(s)\|^2ds,~\|\eta\|_{L_{\tilde{g}}^2}^2\leq C\|z\|_{\mathcal{H}_F}\|f\|_{\mathcal{H}_F},
	\end{align}
	where $\tilde{g}$ is defined in Lemma \ref{Ig}.
	
	Taking the $\mathcal{H}_F$-inner product of $(i\lambda I-\mathcal{A}_F)z=f$ with $z$, and taking the real part of the inner product, we get from \eqref{4} that
	\begin{align*} \beta\|\theta_x\|^2-\frac{\kappa}{2}\smallint_0^{\infty}g'(s)\|\eta_x(s)\|^2ds=-\operatorname{Re}(\mathcal{A}_Fz,z)_{\mathcal{H}_F}=\operatorname{Re}((i\lambda I-\mathcal{A}_F)z,z)_{\mathcal{H}_F}=\operatorname{Re}(f,z)_{\mathcal{H}_F}\leq \|z\|_{\mathcal{H}_F}\|f\|_{\mathcal{H}_F}.
	\end{align*}
	Then,
	\begin{align*}
		\|\eta\|_{L_{\tilde{g}}^2}^2=\smallint_0^\infty \tilde{g}\|\eta_{x}\|^2ds=\smallint_J g\|\eta_{x}\|^2ds=-\alpha\smallint_J g'(s)\|\eta_{x}\|^2ds\le-\alpha\smallint_0^\infty g'(s)\|\eta_{x}\|^2ds\le\frac2\kappa\|z\|_{\mathcal{H}_F}\|f\|_{\mathcal{H}_F}.
	\end{align*}
	We get the desired result \eqref{b2} from the above two estimates.
	
	\noindent\textbf{Step 2:} there holds
	\begin{equation}\label{b1}
		\|\phi_x+\psi\|^2\leq \frac{C}{I_g(\lambda)}\left(\frac{1}{I_g(\lambda)}+\frac{1}{|\lambda|}+1\right)\|z\|_{\mathcal{H}_F}\|f\|_{\mathcal{H}_F}.
	\end{equation}
	
	Since \eqref{2F}-(a,c), $\Phi=i\lambda\phi-f^1$, $\tilde{\Psi}=i\lambda\tilde{\psi}-\tilde{f^3}$, by solving the ODE \eqref{2F}-(f), we have
	\begin{equation}\label{eta}
		\eta(s)=(1-e^{-i\lambda s})(\phi+\tilde{\psi})-\frac{1}{i\lambda}(1-e^{-i\lambda s})(f^1+\tilde{f^3})+\smallint_0^se^{-i\lambda(s-\tau)}f^6(\tau)d\tau.
	\end{equation}
	Taking the $L^2_{\tilde{g}}$-inner product of \eqref{eta} with $\phi+\tilde{\psi}$, we get
	\begin{align*} \smallint_0^{\infty}\tilde{g}(s)(\eta_x(s),\phi_x+\psi)ds&=\|\phi_x+\psi\|^2\smallint_0^{\infty}\tilde{g}(s)(1-e^{-i\lambda s})ds-\frac{1}{i\lambda}\smallint_0^{\infty}\tilde{g}(s)(1-e^{-i\lambda s})(f^1_x+f^3,\phi_x+\psi)\\
		&+\smallint_0^{\infty}\tilde{g}(s)\smallint_0^se^{-i\lambda(s-\tau)}(f^6_x(\tau),\phi_x+\psi )d\tau ds.
	\end{align*}
	Observe that $
	\operatorname{Re}\smallint_0^{\infty} \tilde{g}(s)(1 - e^{-i\lambda s})\,ds = \smallint_0^\infty \tilde{g}(s)(1 - \cos(\lambda s))\,ds = I_g(\lambda),$ where $I_g(\lambda)$ is the spectral function defined in Lemma~\ref{Ig}. Taking the real part of the preceding identity, we obtain:
	\begin{align*} I_g(\lambda)\|\phi_x+\psi\|^2=&\operatorname{Re}\left[\smallint_0^{\infty}\tilde{g}(s)(\eta_x(s),\phi_x+\psi)ds+\frac{1}{i\lambda}\smallint_0^{\infty}\tilde{g}(s)(1-e^{-i\lambda s})(f^1_x+f^3,\phi_x+\psi)ds\right]\\ &-\operatorname{Re}\left[\smallint_0^{\infty}\tilde{g}(s)\smallint_0^se^{-i\lambda(s-\tau)}(f^6_x(\tau),\phi_x+\psi )d\tau ds\right]\\
		\leq & \|\phi_x+\psi\|\left[\|\eta\|_{L^2_{\tilde{g}}}+\frac{2}{|\lambda|}\|f_x^1+f^3\|+\smallint_0^{\infty}g(s)\smallint_0^s\|f^6_x(\tau)\|d\tau ds\right]\\
		\leq&\frac{I_g(\lambda)}{2}\|\phi_x+\psi\|^2+C\left[\frac{1}{I_g(\lambda)}\|\eta\|^2_{L^2_{\tilde{g}}}+\left(\frac{1}{|\lambda|}+1\right)\|z\|_{\mathcal{H}_F}\|f\|_{\mathcal{H}_F}\right]\\
		\leq&\frac{I_g(\lambda)}{2}\|\phi_x+\psi\|^2+C\left(\frac{1}{I_g(\lambda)}+\frac{1}{|\lambda|}+1\right)\|z\|_{\mathcal{H}_F}\|f\|_{\mathcal{H}_F},
	\end{align*}
	where we have used Lemma \ref{b2} for $\|\eta\|_{L_{\tilde{g}}^2}^2\leq C\|z\|_{\mathcal{H}_F}\|f\|_{\mathcal{H}_F}$ and the following estimates: first, by Assumption \ref{g1},
	$$\smallint_0^\infty \tilde{g}(s)ds\le\smallint_0^\infty g(s)ds=\ell\in(0,1),$$
	and, by Lemma \ref{lemggj},
	\begin{align*}
		\smallint_0^\infty g(s)\smallint_0^s\|f^6_x(\tau)\|d\tau ds\le\left(\smallint_0^\infty g(s)ds\right)^{\frac12}\left(\smallint_0^\infty g(s)\left(\smallint_0^s\|f^6_x(\tau)\|d\tau\right)^2\right)^{\frac12}\le C\|f^6\|_{L_g^2}.
	\end{align*}
	Therefore, we arrive at the desired result \eqref{b1}.
	
	\noindent\textbf{Step 3:} 	there holds
	\begin{align}\label{b7}
		\| \eta \|_{L^2_g}^2 \leq C \left( \frac{1}{I_g(\lambda)} + 1 \right) \left( \frac{1}{I_g(\lambda)} + \frac{1}{|\lambda|} + 1 \right) \| z \|_{\mathcal{H}_F} \| f \|_{\mathcal{H}_F}.
	\end{align}

	Taking the inner product of \eqref{eta} with \(\eta\) in \(L^2_g\), we have
	\begin{align*}
		\| \eta \|_{L^2_g}^2 = &\smallint_0^\infty g(s) \smallint_0^s e^{-i\lambda(s-\tau)} (f_x^6(\tau), \eta_x(s)) \, d\tau ds + \smallint_0^\infty g(s)(1-e^{-i\lambda s})(\phi_x + \psi, \eta_x(s)) \, ds \\
		&- \frac{1}{i\lambda} \smallint_0^\infty g(s)(1-e^{-i\lambda s})(f_x^1 + f^3, \eta_x(s)).
	\end{align*}
	Using the H\"older inequality and Lemmas \ref{lemggj}, \eqref{b2} and \eqref{b1}, we arrive at
	\begin{align*}
		\| \eta \|_{L^2_g}^2 &\leq C \| \eta \|_{L^2_g} \left[ \| \phi_x + \psi \| + \frac{1}{|\lambda|} \| f_x^1 + f^3 \| + \left( \smallint_0^\infty g(s) \left( \smallint_0^s \| f_x^6(\tau) \| \, d\tau \right)^2 \, ds \right)^{1/2} \right] \\
		&\leq \frac{1}{2} \| \eta \|_{L^2_g}^2 + C\left[ \| \phi_x + \psi \|^2 + \left( \frac{1}{|\lambda|} + 1 \right) \| z \|_{\mathcal{H}_F} \| f \|_{\mathcal{H}_F} \right] \\
		&\leq \frac{1}{2} \| \eta \|_{L^2_g}^2 + C \left( \frac{1}{I_g(\lambda)} + 1 \right) \left( \frac{1}{I_g(\lambda)} + \frac{1}{|\lambda|} + 1 \right) \| z \|_{\mathcal{H}_F} \| f \|_{\mathcal{H}_F}.
	\end{align*}
	The desired estimate \eqref{b7} follows.

	\noindent\textbf{Step 4:} 	there holds
	\begin{equation}\label{b3}
		\rho_1\|\Phi\|^2\leq C\left(\frac{1}{I_g(\lambda)}+1\right)\left(\frac{1}{I_g(\lambda)}+\frac{1}{|\lambda|}+1\right)\|z\|_{\mathcal{H}_F}\|f\|_{\mathcal{H}_F}+\frac14\|z\|_{\mathcal{H}_F}^2.
	\end{equation}
	
	Taking the $L^2$-inner of \eqref{2F}-(b) with $\phi$, we obtain \[i\lambda\rho_1(\Phi,\phi)+\kappa\omega(\phi_x+\psi,\phi_x)+\kappa\smallint_0^\infty g(s)(\eta_x(s),\phi_x)ds+\sigma(\theta_x,\phi)=\rho_1(f^2,\phi).\]
	Since, by \eqref{2F}-(b), $i\lambda\phi=\Phi+f^1$. It follows from Poincar\'e's inequality and Lemmas \ref{b2}--\ref{b7}, that
	\begin{align*} \rho_1\|\Phi\|^2&=-(\Phi,f^1)-\rho_1(f^2,\phi)+\kappa\omega(\phi_x+\psi,\phi_x)+\kappa\smallint_0^{\infty}g(s)(\eta_x(s),\phi_x)ds+\sigma(\theta,\phi_x)\\
		&\leq C\|\Phi\|\|f^1\|+C\|\phi_x\|\|f^2\|+ C\|\phi_x\|\left(\|\phi_x+\psi\|+\|\eta\|_{L_g^2}+\|\theta\|\right)\\
		&\leq C\|z\|_{\mathcal{H}_F}\|f\|_{\mathcal{H}_F}+ C\left(\|\phi_x+\psi\|+\|\psi\|\right)\left(\|\phi_x+\psi\|+\|\eta\|_{L_g^2}+\|\theta\|\right)\\
		&\leq C\|z\|_{\mathcal{H}_F}\|f\|_{\mathcal{H}_F}+ C\left(\|\phi_x+\psi\|^2+\|\eta\|^2_{L_g^2}+\|\theta_x\|^2\right)+\frac14\|\psi_x\|^2\\
		&\leq C\left(\frac{1}{I_g(\lambda)}+1\right)\left(\frac{1}{I_g(\lambda)}+\frac{1}{|\lambda|}+1\right)\|z\|_{\mathcal{H}_F}\|f\|_{\mathcal{H}_F}+\frac14\|z\|_{\mathcal{H}_F}^2.
	\end{align*}
	We get the desired conclusion \eqref{b3}.
	
	\noindent\textbf{Step 5:} there holds
	\begin{align}\label{b4}
		\rho_2\|\Psi\|^2&\leq C(\epsilon)\left(\frac{1}{I_g(\lambda)}+1\right)\left(\frac{1}{I_g(\lambda)}+\frac{1}{|\lambda|}+1\right)\|z\|_{\mathcal{H}_F}\|f\|_{\mathcal{H}_F}\notag\\ &+C|\chi_0|(1+|\lambda|^2)\left(\frac{1}{I_g(\lambda)}+1\right)\left(\frac{1}{I_g(\lambda)}+\frac{1}{|\lambda|}+1\right)\|z\|_{\mathcal{H}_F}\|f\|_{\mathcal{H}_F}+\frac14\|z\|_{\mathcal{H}_F}^2.
	\end{align}
	
	Taking the $L^2$-inner product of \eqref{2F}-(d) with $\omega(\phi_x+\psi)+\smallint_0^{\infty}g(s)\eta_x(s)ds$ and using \eqref{2F}-(a,c), we get
	\begin{align*} {\omega\|\Psi\|^2}=&-\omega(\Phi_x,\Psi)-\omega(f_x^1,\Psi)-\omega(f^3,\Psi)-i\lambda\smallint_0^{\infty}g(s)(\eta_x(s),\Psi)ds\\
		&-\frac{b}{\rho_2}\left(\omega(\phi_x+\psi)+\smallint_0^{\infty}g(s)\eta_{x}(s)ds,\psi_{xx}\right)+\frac{\kappa}{\rho_2}\left\|\omega(\phi_x+\psi)+\smallint_0^{\infty}g(s)\eta_x(s)ds\right\|^2\\
		&-\frac{\sigma}{\rho_2}\left(\omega(\phi_x+\psi)+\smallint_0^{\infty}g(s)\eta_x(s)ds,\theta\right)-\left(\omega(\phi_x+\psi)+\smallint_0^{\infty}g(s)\eta_x(s)ds,f^4\right).
	\end{align*}
	Taking the $L^2_g$-inner product of \eqref{2F}-(f) with $\tilde{\Psi}$ and using $\smallint_0^\infty g(s)ds=\ell$, we obtain $$\ell\|\Psi\|^2=i\lambda\smallint_0^{\infty}g(s)(\eta_x(s),\Psi)ds+\smallint_0^{\infty}g(s)(\eta_{sx}(s),\Psi)ds-\ell(\Phi_x,\Psi)-\smallint_0^{\infty}g(s)(f^6_x(s),\Psi)ds.$$
	Taking the $L^2$-inner product of \eqref{2F}-(b) with $\psi_x$ and using \eqref{2F}-(c), we obtain
	\begin{align*}			{(\Phi,\Psi_x)=-(\Phi,f_x^3)+\frac{\kappa}{\rho_1}\left(\omega(\phi_x+\psi)+\smallint_0^{\infty}g(s)\eta_x(s)ds,\psi_{xx}\right)+\frac{\sigma}{\rho_1}(\theta_x,\psi_x)-(f^2,\psi_x)}.
	\end{align*}
	
	Since $\ell+\omega=1$, adding all the results obtained in the above and extracting the real part on both sides of the resulting identity, we arrive at
	\begin{equation}\label{gjdapusi}
		\rho_2\|\Psi\|^2=\operatorname{Re}[\rho_2s_1+s_2+\rho_2s_3+\chi_0\rho_2s_4],
	\end{equation}
	where
	\begin{align*}
		{s_1}=&-(\Phi,f_x^3)-\omega(f_x^1+f^3,\Psi)+\frac{\sigma}{\rho_1}(\theta_x,\psi_x)-\frac{\sigma}{\rho_2}\left(\omega(\phi_x+\psi)+\smallint_0^{\infty}g(s)\eta_x(s)ds,\theta\right)-(f^2,\psi_x)\\
		&-\smallint_0^{\infty}g(s)(f^6_x,\Psi)-\left(\omega(\phi_x+\psi)+\smallint_0^{\infty}g(s)\eta_x(s)ds,f^4\right),\\
		{s_2}=&\kappa\left\|\omega(\phi_x+\psi)+\smallint_0^{\infty}g(s)\eta_x(s)ds\right\|^2,~s_3=\smallint_0^{\infty}g(s)(\eta_{sx}(s),\Psi)ds,\\
		{s_4}=&\left(\omega(\phi_x+\psi)+\smallint_0^{\infty}g(s)\eta_x(s)ds,\psi_{xx}\right).
	\end{align*}
	Using Lemmas \ref{b2}-\ref{b7}, Poincar\'e's inequality, and Young's inequality, we get
	\begin{align*}
		|s_1|&\leq C\|z\|_{\mathcal{H}_F}\|f\|_{\mathcal{H}_F}+C\|\theta\|\|\phi_x+\psi\|+C\|\theta\|\|\eta\|_{L^2_g}+C\|\theta_x\|\|\psi_x\|\\
		&\leq C\|z\|_{\mathcal{H}_F}\|f\|_{\mathcal{H}_F}+C\|\theta_x\|^2+C\|\phi_x+\psi\|^2+C\|\eta\|^2_{L^2_g}+\frac{1}{4\rho_2} \|\psi_x\|^2\\
		&\leq C\left(\frac{1}{I_g(\lambda)}+1\right)\left(\frac{1}{I_g(\lambda)}+\frac{1}{|\lambda|}+1\right)\|z\|_{\mathcal{H}_F}\|f\|_{\mathcal{H}_F}+\frac{1}{4\rho_2} \|z\|_{\mathcal{H}_F}^2.
	\end{align*}
	Similarly, using Lemma \ref{b7} and Lemma \ref{b1}, we have
	\begin{align*}
		|s_2|&\leq C\|\phi_x+\psi\|^2+C\|\eta\|^2_{L^2_g}\leq C\left(\frac{1}{I_g(\lambda)}+1\right)\left(\frac{1}{I_g(\lambda)}+\frac{1}{|\lambda|}+1\right)\|z\|_{\mathcal{H}_F}\|f\|_{\mathcal{H}_F}.
	\end{align*}
	For $s_3$, let $\{s_n\}_{n=1}^\infty$ be the sequence got in proving \eqref{jxsn}, then we get
	\begin{align*}
		\limsup_{n\to\infty}|g(s_n)(\eta_x(s_n),\Psi)|&\le\limsup_{n\to\infty}\sqrt{g(s_n)}\sqrt{g(s_n)}\|\eta_x(s_n)\|\|\psi\|\\
		&\le\sqrt{g(0)}\|\Psi\|\sqrt{\lim_{n\to\infty}\left(g(s_n)\|\eta_x(s_n)\|^2\right)}=0.
	\end{align*}
	Since $\eta_x(0)$, then by integrating part we get,
	\begin{align*}
		s_3&=\lim_{n\to\infty}\smallint_0^{s_n}g(s)(\eta_{sx}(s),\Psi)ds=\lim_{n\to\infty}\smallint_0^{s_n}g(s)\frac{d}{ds}(\eta_{x}(s),\Psi)ds\\
		&=-\lim_{n\to\infty}\smallint_0^{s_n}g'(s)(\eta_{x}(s),\Psi)ds=-\smallint_0^{\infty}g'(s)(\eta_x(s),\Psi)ds.
	\end{align*}
	Since by Assumption \ref{g1}, $g(\cdot)>0$, $g(\cdot)$ is nonincreasing and $\smallint_0^\infty g(s)ds<\infty$, we get $\lim_{s\to\infty}g(s)=0$. Then by Lemma \ref{b2}, we deduce
	\begin{align*}
		|s_3|\le&-\smallint_0^\infty g'(s)\|\eta_x(s)\|ds\|\Psi\|\le C\left(\smallint_0^\infty-g'(s)\|\eta_x(s)\|ds\right)^2+\frac14\|\Psi\|^2\\
		\le& C\left(\smallint_0^\infty-g'(s)ds\right)\left(\smallint_0^\infty -g'(s)\|\eta_x(s)\|^2ds\right)+\frac14\|\Psi\|^2\\
		=& Cg(0)\left(-\smallint_0^{\infty}g'(s)\|\eta_x(s)\|^2ds\right)+\frac{1}{4}\|\Psi\|^2\leq C\|z\|_{\mathcal{H}_F}\|f\|_{\mathcal{H}_F}+\frac{1}{4}\|\Psi\|^2.
	\end{align*}
	If $\chi_0=0$, we deduce the estimate \eqref{b4} by estimates $s_1-s_3$ and \eqref{gjdapusi}. If $\chi_0\neq0$, we use \eqref{2F}-(d) to rewrite $s_4$ as $$s_4={\frac{1}{b}s_2+\frac{1}{b}\left(\omega(\phi_x+\psi)+\smallint_0^{\infty}g(s)\eta_x(s)ds,i\lambda\rho_2\Psi-\rho_2f^4-\sigma\theta\right)}.$$
	Then by Lemmas \ref{b2}-\ref{b7}, Poincar\'e's inequality, and Young's inequality, we obtain
	\begin{align*}
		|s_4|&\leq C|s_2|+C\|z\|_{\mathcal{H}_F}\|f\|_{\mathcal{H}_F}+C\|\theta\|^2+C\|\phi_x+\psi\|^2+C\|\eta\|^2_{L^2_g}+C|\lambda|\|\Psi\|\left(\|\phi_x+\psi\|+\|\eta\|_{L^2_g}\right)\\
		&\leq C\left(1+|\lambda|^2\right)\left(\frac{1}{I_g(\lambda)}+1\right)\left(\frac{1}{I_g(\lambda)}+\frac{1}{|\lambda|}+1\right)\|z\|_{\mathcal{H}_F}\|f\|_{\mathcal{H}_F}+\frac{1}{4|\chi_0|}\|\Psi\|^2.
	\end{align*}
	Collecting the above estimates and plugging the results into \eqref{gjdapusi}, we obtain the result \eqref{b4} immediately.
	
	\noindent\textbf{Step 6:} 	there holds
	\begin{align}\label{b5}
		b\|\psi_x\|_2^2&\leq C\left(\frac{1}{I_g(\lambda)}+1\right)\left(\frac{1}{I_g(\lambda)}+\frac{1}{|\lambda|}+1\right)\|z\|_{\mathcal{H}_F}\|f\|_{\mathcal{H}_F}\notag\\
		&+C|\chi_0|(1+|\lambda|^2)\left(\frac{1}{I_g(\lambda)}+1\right)\left(\frac{1}{I_g(\lambda)}+\frac{1}{|\lambda|}+1\right)\|z\|_{\mathcal{H}_F}\|f\|_{\mathcal{H}_F}+\frac18\|z\|^2_{\mathcal{H}_F}.
	\end{align}
	
	Taking the $L^2$-inner product of \eqref{2F}-(d) with $\psi$ ,  and using \eqref{2F}-(c) and Poincar\'e's inequality, we have
	\begin{align*} b\|\psi_x\|^2&=\rho_2(\Psi,\Psi+f^3)-\kappa\omega(\phi_x+\psi,\psi)-\kappa\smallint_0^{\infty}g(s)(\eta_x(s),\psi)ds+\sigma(\theta,\psi)+\rho_2(f^4,\psi)\\
		&\leq \rho_2\|\Psi\|^2+C\|z\|_{\mathcal{H}_F}\|f\|_{\mathcal{H}_F}+C\|\psi\|\left(\|\phi_x+\psi\|+\|\eta\|_{L_g^2}+\|\theta\|\right)\\
		&\leq\rho_2\|\Psi\|^2+C\|z\|_{\mathcal{H}_F}\|f\|_{\mathcal{H}_F}+ C\left(\|\phi_x+\psi\|^2+\|\eta\|_{L_g^2}^2+\|\theta_x\|^2\right)+\frac b 2\|\psi_x\|^2.
	\end{align*}
	The the conclusion \eqref{b5} follows from Lemmas \ref{b2}-\ref{b7} and Lemma \ref{b4}.
	
	\textbf{Finally}, using the estimates got in Steps 1-6, we get
	\begin{align*}
		\|z\|_{\mathcal{H}_F}\leq C\left(\frac{1}{I_g(\lambda)}+1\right)\left(\frac{1}{I_g(\lambda)}\!+\!\frac{1}{|\lambda|}\!+\!1\right)\|f\|_{\mathcal{H}_F} +C|\chi_0|(1+|\lambda|^2)\left(\frac{1}{I_g(\lambda)}+1\right)\left(\frac{1}{I_g(\lambda)}\!+\!\frac{1}{|\lambda|}\!+\!1\right)\|f\|_{\mathcal{H}_F}.
	\end{align*}
	Then the conclusion follows.
\end{proof}
\begin{proof}[Proof of the conclusion 2 of Theorem \ref{thmboundresolvent}]
	Let $f=(f^1,f^2,f^3,f^4,f^5,f^6,f^7)\in \mathcal{H}_C$ be given. Since we assumed $i\mathbb{R}\subset\rho(\mathcal{A}_C)$, we can let $z = (\phi,\Phi,\psi,\Psi,\theta, q,\eta):=(i\lambda I-\mathcal{A}_C)^{-1}f\in D(\mathcal{A}_C)$, i.e.,
	\begin{equation}\label{2C}
		\begin{cases}
			\text{(a):~}&i\lambda\phi-\Phi=f^1,\\ \text{(b):~}&i\lambda\rho_1\Phi-\kappa[\omega(\phi_x+\psi)+\smallint_0^{\infty}g(s)\eta_x(s)ds]_x+\sigma\theta_x=\rho_1f^2,\\
			\text{(c):~}&i\lambda\psi-\Psi=f^3,\\ \text{(d):~}&i\lambda\rho_2\Psi-b\psi_{xx}+\kappa[\omega(\phi_x+\psi)+\smallint_0^{\infty}g(s)\eta_x(s)ds]-\sigma\theta=\rho_2f^4,\\ \text{(e):~}&i\lambda\rho_3\theta+q_x+\sigma(\Phi_x+\Psi)=\rho_3f^5,\\
			\text{(f):~}&i\lambda\tau q+\beta q+\theta_x=\tau f^6,\\
			\text{(g):~}&i\lambda \eta+\eta_s-(\Phi+\tilde{\Psi})=f^7.
		\end{cases}
	\end{equation}
	
	Next, we will prove the conclusion 2 of Theorem \ref{thmboundresolvent} by several steps. In the proof, we let $C$ be a general positive constant independent of $\lambda$.
	
	\noindent\textbf{Step 1:} there holds
	\begin{align}\label{b01}
		\|q\|^2,~-\smallint_0^{\infty}g'(s)\|\eta_x(s)\|^2ds\leq C\|z\|_{\mathcal{H}_C}\|f\|_{\mathcal{H}_C},
	\end{align}
	
	Similarly to \eqref{hs1}, we get
	\begin{equation}\label{Re} \operatorname{Re}(\mathcal{A}_Cz,z)_{\mathcal{H}_C}=-\beta\|q\|^2+\frac{\kappa}{2}\smallint_0^{\infty}g'(s)\|\eta_x(s)\|^2ds\leq0.
	\end{equation}
	Note $-\operatorname{Re}(\mathcal{A}_Cz,z)_{\mathcal{H}_C}=\operatorname{Re}(i\lambda I-\mathcal{A}_Cz,z)_{\mathcal{H}_C}=\operatorname{Re}(f,z)_{\mathcal{H}_C}$ and $g$ is nonincresing. Then the conclusion \eqref{b01} follows from the above inequality immediately.
	
	\noindent\textbf{Step 2:}  for every $\epsilon>0$, there exists a constant $C_{\epsilon} > 0$, independent of $\lambda$, such that
	\begin{equation}\label{t2} \|\theta\|^2\leq\epsilon\|z\|^2_{\mathcal{H}_C}+C_{\epsilon} \|z\|_{\mathcal{H}_C}\|f\|_{\mathcal{H}_C}
	\end{equation}
	
	Integrating \eqref{2C}-(f) from $0$ to $x$ and then taking $L^2$-inner product with $\theta$, since $\smallint_0^L\theta(x)dx=0$, we get
	\begin{equation*}
		\tau\left(\smallint_0^xi\lambda qdy,\theta\right)+\beta\left(\smallint_0^xqdy,\theta\right)+\|\theta\|^2=\tau\left(\smallint_0^xf^6dy,\theta\right)
	\end{equation*}
	Thus, using H\"older's and Poincar\'e's inequality and \eqref{b01}, we have
	\begin{align}
		\|\theta\|^2&\leq C\left|\left(\smallint_0^xi\lambda qdy,\theta\right)\right|+C\|q\|\|\theta\|+C\|f^6\|\|\theta\|\nonumber\\
		&\leq C\left|\left(\smallint_0^xi\lambda qdy,\theta\right)\right|+C\|q\|\|z\|_{\mathcal{H}_C}+C\|z\|_{\mathcal{H}_C}\|f\|_{\mathcal{H}_C}.\label{22200}
	\end{align}
	Considering $\left(\smallint_0^xi\lambda qdy,\theta\right)$, using \eqref{2C}-(e) and integrating by parts, we get
	\begin{align}
		&\left|\left(\smallint_0^xi\lambda qdy,\theta\right)\right|=\left|\left(\smallint_0^xqdy,-i\lambda\theta\right)\right|\nonumber\\
		=&\left|-\left(\smallint_0^xqdy,f^5\right)+\frac{1}{\rho_3}\left(\smallint_0^xqdy,q_x\right)+\frac{\sigma}{\rho_3}\left(\smallint_0^xqdy,\Phi_x+\Psi\right)\right|\nonumber\\
		=&\left|-\left(\smallint_0^xqdy,f^5\right)+\frac{1}{\rho_3}\left[\overline{q}(L)\smallint_0^Lqdx-\|q\|^2\right]+\frac{\sigma}{\rho_3}\left[\overline{\Phi}(L)\smallint_0^Lqdx-(q,\Phi)+\left(\smallint_0^xqdy,\Psi\right)\right]\right|\nonumber\\
		=&\left|-\left(\smallint_0^xqdy,f^5\right)+\frac{1}{\rho_3}(\overline{q}(L)+\sigma\overline{\Phi}(L))\smallint_0^Lqdx-\frac{1}{\rho_3}\|q\|^2+\frac{\sigma}{\rho_3}\left(\smallint_0^xqdy,\Psi\right)\right|\nonumber\\
		\leq& \frac{1}{\rho_3}\left|(q(L)+\sigma\Phi(L))\smallint_0^L\overline{q}dx\right|+C\|q\|\|f^5\|+C\|q\|^2+C\|q\|\|\Psi\|\nonumber\\
		\leq & \frac{1}{\rho_3}\left|(q(L)+\sigma\Phi(L))\smallint_0^L\overline{q}dx\right|+C\|z\|_{\mathcal{H}_C}\|f\|_{\mathcal{H}_C}+C\|q\|^2+C\|q\|\|\Psi\|.\label{2220}
	\end{align}
	
	On the other hand, integrating \eqref{2C}-(e) from $x$ to $L$, it follows
	\begin{equation*} q(L)+\sigma\Phi(L)=q+\sigma\Phi+\smallint_x^L\left(-i\lambda\rho_3\theta-\sigma\Psi+\rho_3f^5\right)dy.
	\end{equation*}
	Therefore,
	\begin{align}\label{222}
		[q(L) + \sigma \Phi(L)] \smallint_0^L \overline{q} \, dx = &\rho_3 \smallint_x^L f^5(y) \, dy \smallint_0^L \overline{q}(x) \, dx + [q(x) + \sigma \Phi(x)] \smallint_0^L \overline{q}(x) \, dx\notag\\
		&+\rho_3 \smallint_x^L \theta(y) \, dy \smallint_0^L \overline{(i \lambda q)}(x) \, dx - \sigma \smallint_x^L \Psi(y) \, dy \smallint_0^L \overline{q}(x) \, dx.
	\end{align}
	Now, using the identity \eqref{2C}-(f), we can rewrite \eqref{222} as
	\begin{align}\label{2222}
		\left|[q(L) + \sigma \Psi(L)] \smallint_0^L \overline{q}(x) \, dx\right| =& \rho_3\bigg| \smallint_x^L f^5(y) \, dy \smallint_0^L \overline{q}(x) \, dx + [q(x) + \sigma \Phi(x)] \smallint_0^L \overline{q}(x) \, dx\notag\\
		&+\frac{\rho_3}{\tau} \smallint_x^L \theta(y) \, dy \smallint_0^L \left(\tau \overline{f^6}-\beta \overline{q}(x)-\overline{\theta}_x\right) \, dx - \sigma \smallint_x^L \Psi(y) \, dy \smallint_0^L \overline{q}(x) \, dx\bigg|\notag\\
		&\leq \rho_3\left| \smallint_x^L f^5(y) \, dy\smallint_0^L \overline{q}(x) \, dx\right| +\left| [q(x) + \sigma \Phi(x)] \smallint_0^L \overline{q}(x) \, dx\right|\notag\\
		&+\frac{\rho_3}{\tau} \left|\smallint_x^L \theta(y) \, dy \smallint_0^L (\tau \overline{f^6}-\beta \overline{q}(x)) \, dx\right| + \sigma\left| \smallint_x^L \Psi(y) \, dy \smallint_0^L \overline{q}(x) \, dx\right|\notag\\
		&+\frac{\rho_3}{\tau} \left|\smallint_x^L \theta(y) \, dy \smallint_0^L \overline{\theta}_x(x)\, dx\right|\notag\\
		&\leq \rho_3\left| \smallint_x^L f^5(y) \, dy \smallint_0^L \overline{q}(x) \, dx\right| + \left|[q(x) + \sigma \Phi(x)] \smallint_0^L \overline{q}(x) \, dx\right|\notag\\
		&+\frac{\rho_3}{\tau} \left|\smallint_x^L \theta(y) \, dy \smallint_0^L (\tau \overline{f^6}-\beta \overline{q}(x)) \, dx\right| + \sigma\left| \smallint_x^L \Psi(y) \, dy \smallint_0^L \overline{q}(x) \, dx\right|\notag\\
		&+\frac{\rho_3}{\tau}\left|\overline{\theta}(L)-\overline{\theta}(0)\right|\left|\smallint_x^L\theta(y)dy\right|.
	\end{align}
	
	Since $\theta\in L_*^2$, i.e., $\smallint_0^L\theta dx=0$, let $x\to 0$, \eqref{2222} turns to
	\begin{equation*}
		\begin{split}
			\left|[q(L) + \sigma \Psi(L)] \smallint_0^L \overline{q}(x) \, dx\right|
			\leq& \rho_3 \smallint_0^L| f^5(x)| \, dx \smallint_0^L |\overline{q}(x)| \, dx \\&+ \left|[q(0) + \sigma \Phi(0)] \smallint_0^L \overline{q}(x) \, dx\right|+ \sigma \smallint_0^L |\Psi(x)| \, dx \smallint_0^L |\overline{q}(x) \, |dx.
		\end{split}
	\end{equation*}
	Using H\"older's inequality we easily deduce by \eqref{2220}
	\begin{align*}
		\left|\left(\smallint_0^xi\lambda qdy,\theta\right)\right|\leq C \|z\|_{\mathcal{H}_C} \|f\|_{\mathcal{H}_{C}} + C \|q\| \|\Phi\| + C \|q\| \|\Psi\| + C \|q\| \|\theta\|+C\|q\|^2.
	\end{align*}
	Thus, going back to \eqref{22200} we arrive at
	\begin{equation*}
		\|\theta\|^2 \leq C \|z\|_{\mathcal{H}_{C}} \|f\|_{\mathcal{H}_{C}} + C \|q\| \|\Phi\| + C \|q\| \|\Psi\| + C \|q\| \|\theta\|+C\|q\|^2+C\|q\|\|z\|_{\mathcal{H}_C}.
	\end{equation*}
	Therefore, using \eqref{b01}, and young's inequality, we deduce, for any $\epsilon>0$, there exists a constant $C_{\epsilon} > 0$, independent of $\lambda$, such that
	\begin{align*}
		\|\theta\|^2 \leq C \|z\|_{\mathcal{H}_{C}} \|f\|_{\mathcal{H}_{C}}+C_\epsilon\|q\|^2+\epsilon\|z\|^2_{\mathcal{H}_C}\leq\epsilon\|z\|^2_{\mathcal{H}_C}+C_\epsilon \|z\|_{\mathcal{H}_{C}} \|f\|_{\mathcal{H}_{C}}.
	\end{align*}
	\noindent\textbf{Step 3:} Similar to the proofs of \eqref{b1} and \eqref{b7}, we get
	\begin{align}
		&\|\phi_x+\psi\|^2\leq \frac{C}{I_g(\lambda)}\left(\frac{1}{I_g(\lambda)}+\frac{1}{|\lambda|}+1\right)\|z\|_{\mathcal{H}_C}\|f\|_{\mathcal{H}_C},\label{t3}\\
		&\| \eta \|_{L^2_g}^2 \leq C \left( \frac{1}{I_g(\lambda)} + 1 \right) \left( \frac{1}{I_g(\lambda)} + \frac{1}{|\lambda|} + 1 \right) \| z \|_{\mathcal{H}_C} \| f \|_{\mathcal{H}_C}.\label{t7}
	\end{align}
	\noindent\textbf{Step 4:}  for every $\epsilon>0$, there exists a constant $C_{\epsilon} > 0$, independent of $\lambda$, such that
	\begin{equation}\label{Phi}
		\rho_1\|\Phi\|^2\leq C_\epsilon\left(\frac{1}{I_g(\lambda)}+1\right)\left(\frac{1}{I_g(\lambda)}+\frac{1}{|\lambda|}+1\right)\| z \|_{\mathcal{H}_{c}} \| f \|_{\mathcal{H}_{c}} + \epsilon  \| z \|_{\mathcal{H}_{c}}^2,
	\end{equation}
	
	Taking the inner $L^2$-product of \eqref{2C}-(b) with $\phi$, we obtain $$-\rho_1\smallint_0^L\Phi(\overline{i\lambda\phi})dx+\kappa\omega\smallint_0^L(\phi_x+\psi)\overline{\phi_x}dx+\kappa\smallint_0^{\infty}g(s)\left(\smallint_0^L\eta_x(s)\overline{\phi_x}dx\right)ds+\sigma\smallint_0^L\theta_x\overline{\phi}dx=\rho_1\smallint_0^Lf^2\overline{\phi}dx.$$
	Using \eqref{2C}-(a) and H\"older's, Young's and Poincar\'e's inequality $\|\psi\|\leq c_0\|\psi_x\|$, we get, for any $\epsilon>0$, there exists a constant $C_{\epsilon} > 0$, independent of $\lambda$, such that
	\begin{align*}
		\rho_1\|\Phi\|^2 &= -\rho_1(\Phi,f^1)-\rho_1(f^2,\phi)+\kappa\omega(\phi_x+\psi,\phi_x)+\kappa\smallint_0^{\infty}g(s)(\eta_x(s),\phi_x)\,ds - \sigma(\theta,\phi_x) \\
		&\leq \kappa\omega \| \phi_x + \psi \| \| \phi_x \| + \kappa \| \eta \|_{L_g^2} \| \phi_x \| + \sigma \| \theta \| \| \phi_x \| + \rho_1 \| \Phi \| \| f^1 \| + \rho_1 \| \phi \| \| f^2 \| \\
		&\leq (\| \phi_x + \psi \| + \| \psi \|)(\kappa\omega \| \phi_x + \psi \| + \kappa \| \eta \|_{L_g^2} + \sigma \| \theta \|) + C \| z \|_{\mathcal{H}_{C}} \| f \|_{\mathcal{H}_{C}} \\
		&\leq C \left( \| \phi_x + \psi \|^2 + \| \eta \|_{L_g^2}^2 \right) + \sigma \| \phi_x + \psi \| \| \theta \| + \| \psi \| (\kappa\omega \| \phi_x + \psi \| + \kappa \| \eta \|_{L_g^2}) + \sigma \| \theta \| \| \psi \| \\
		&\quad + C \| z \|_{\mathcal{H}_{C}} \| f \|_{\mathcal{H}_{C}} \\
		&\leq C\left( \| \phi_x + \psi \|^2 + \| \eta \|_{L_g^2}^2 \right) + \| \theta \|^2 + c_0 \| \psi_x \| (\kappa\omega \| \phi_x + \psi \| + \kappa \| \eta \|_{L_g^2}) + \sigma c_0 \| \theta \| \| \psi_x \| \\
		&\leq C_\epsilon \left( \| \phi_x + \psi \|^2 + \| \eta \|_{L_g^2}^2 +\| \theta \|^2\right) + \epsilon\| \psi_x \|^2.
	\end{align*}
	Then the conclusion follows from \eqref{t2}, \eqref{t3} and \eqref{t7}.
	
	\noindent\textbf{Step 5:}  for every $\epsilon>0$, there exists a constant $C_{\epsilon} > 0$, independent of $\lambda$, such that
	\begin{align}\label{t5}
		\|\Psi\|^2 &\leq C_\epsilon|\chi_0|(|\lambda|^2+1)\left(\frac{1}{I_g(\lambda)}+1\right)\left(\frac{1}{I_{{g}}(\lambda)}+\frac{1}{|\lambda|}+1\right)\|z\|_{\mathcal{H}_C} \|f\|_{\mathcal{H}_C} \nonumber\\
		&\quad + C_\epsilon\left(\frac{1}{I_g(\lambda)}+1\right)\left(\frac{1}{I_{{g}}(\lambda)}+\frac{1}{|\lambda|}+1\right)\|z\|_{\mathcal{H}_C} \|f\|_{\mathcal{H}_C} +C_\epsilon|\lambda|^2\|z\|_{\mathcal{H}_C} \|f\|_{\mathcal{H}_C} + \epsilon\|z\|_{\mathcal{H}_C}^2.
	\end{align}
	
	Taking the derivative of both sides of the equation \eqref{2C}-(a)  with respect to $x$ and then taking the $L^2$-inner product with $\Psi$, we get
	\begin{equation}\label{4.1}
		-i \lambda ( \Psi,\phi_x) = (\Psi,\Phi_x) + (\Psi,f^1_x).
	\end{equation}
	Taking the inner $L^2$-product of \eqref{2C}-(c) with $\Psi$ , we obtain
	\begin{equation}\label{4.2}
		-i \lambda (\Psi,\psi ) = \|\Psi\|^2 + (\Psi,f^3).
	\end{equation}
	Taking the inner $L^2_g$-product of \eqref{2C}-(g) with $\tilde{\Psi}$,  we have
	\begin{equation}\label{4.3}
		-i \lambda \smallint_0^\infty g(s) (\Psi,\eta_x(s)) \, ds=- \smallint_0^\infty g(s) (\Psi,\eta_{sx}(s)) \, ds + \ell(\Psi,\Phi_x) + \ell\|\Psi\|^2 + \smallint_0^\infty g(s) (\Psi,f_x^7(s)) \, ds.
	\end{equation}
	Taking the $L^2$-inner product of \eqref{2C}-(d) with $\omega (\phi_x + \psi) + \smallint_0^\infty g(s) \eta_x(s) \, ds$ and using \eqref{4.1}, \eqref{4.2} and \eqref{4.3}, since $\ell+\omega=1$, we obtain
	\begin{align}\label{4.4}
		\|\Psi\|^2 = &-\frac{b}{\rho_2} \left(\psi_{xx}, \omega(\phi_x + \psi) + \smallint_0^\infty g(s) \eta_x(s) \, ds\right)+ \frac{\kappa}{\rho_2} \left\|\omega(\phi_x + \psi) + \smallint_0^\infty g(s) \eta_x(s) \, ds\right\|^2\notag\\
		&- \frac{\sigma}{\rho_2} \left(\theta, \omega(\phi_x + \psi) + \smallint_0^\infty g(s) \eta_x(s) \, ds\right) - \left(f^4, \omega(\phi_x + \psi) + \smallint_0^\infty g(s) \eta_x(s) \, ds\right)\notag\\
		&+(\Psi_x,\Phi) - \omega( \Psi,f_x^1) - \omega(\Psi, f^3) + \smallint_0^\infty g(s) (\Psi,\eta_{sx}(s) ) ds - \smallint_0^\infty g(s) (\Psi,f_x^7(s)) \, ds.
	\end{align}
	Taking the derivative of both sides of the equation \eqref{2C}-(c)  with respect to $x$ and then taking the $L^2$-inner product with $\Phi$, we get
	\begin{equation}\label{4.6}
		i\lambda ( \psi_x,\Phi) = (\Psi_x, \Phi) + (f_x^3, \Phi).
	\end{equation}
	Taking the $L^2$-inner product of \eqref{2C}-(f) with $\psi_x$, we have
	\begin{equation}\label{4.7}
		( \psi_x,\theta_x) = \tau(\psi_x,f^6)+	i\lambda\tau (\psi_x,q) - \beta ( \psi_x,q).
	\end{equation}
	Taking the $L^2$-inner product of \eqref{2C}-(b) with $\psi_x$ and using \eqref{4.6} and \eqref{4.7}, we obtain
	\begin{equation}\label{4.8}
		\begin{aligned}
			(\Psi_x, \Phi) &= \frac{\kappa}{\rho_1} \left(\psi_{xx},\omega(\phi_x + \psi) + \smallint_0^\infty g(s) \eta_x(s) \, ds\right)+i \lambda \frac{\sigma\tau}{\rho_1} (\psi_x,q ) -\frac{\sigma \beta}{\rho_1} (\psi_x,q)\\
			&+\frac{\sigma \tau}{\rho_1} (\psi_x,f^6)- (f_x^3, \Phi)- (\psi_x,f^2).
		\end{aligned}
	\end{equation}
	Adding \eqref{4.4} and \eqref{4.8}, we deduce
	\begin{equation}\label{Psigj}
		\|\Psi\|^2\le|a_1|+|a_2|+|a_3|+|a_4|+|\chi_0||a_5|,
	\end{equation}
	where,
	\begin{align*}
		a_1 =& -\omega(\Psi, f_x^1 + f^3) - \smallint_0^\infty g(s) (\Psi,f_x^7(s))ds - (f_x^3,\Phi) -(\psi_x,f^2)+\frac{\sigma \tau}{\rho_1} (f^6, \psi_x)-\frac{\sigma\beta}{\rho_1}(\psi_x,q)\\
		&-\frac{\sigma}{\rho_2} \left(\theta, \omega(\phi_x+\psi) + \smallint_0^\infty g(s) \eta_x(s)ds\right )-\left(f^4, \omega(\phi_x + \psi) + \smallint_0^\infty g(s) \eta_x(s) \, ds\right),\\
		a_2 =& \frac{\kappa}{\rho_2} \left\|\omega(\phi_x+\psi) + \smallint_0^\infty g(s) \eta_x(s)ds\right\|^2,\\
		a_3 =& \smallint_0^\infty g(s) (\Psi,\eta_{sx}(s)),\\	
		a_4 =& i \lambda \frac{\sigma \tau}{\rho_1} ( \psi_x,q),\\
		a_5 =& \left(\psi_{xx},\omega(\phi_x+\psi) + \smallint_0^\infty g(s) \eta_x(s)ds  \right).
	\end{align*}
	
	By using \eqref{b01}, \eqref{t2}, \eqref{t3}, \eqref{t7}, and Cauchy's inequality, we have for any $\epsilon>0$, there exists a constant $C_{\epsilon} > 0$, independent of $\lambda$, such that
	\begin{align*}\label{a1}
		|a_1| &\leq C\|\theta\| \left(\omega \|\phi_x + \psi\| + \| \eta \|_{L^2_g}\right) + C \| q \| \| \psi_x \| + C \| z \|_{\mathcal{H}_C} \| f \|_{\mathcal{H}_C} \\
		&\leq C_\epsilon \left(\|\phi_x + \psi\|^2 + \| \eta \|_{L^2_g}^2 + \| q \|^2\right) + \| \theta \|^2 + \epsilon \| \psi_x \|^2 + C \| z \|_{\mathcal{H}_C} \| f \|_{\mathcal{H}_C} \\
		&\leq C_\epsilon \left(\frac{1}{I_g(\lambda)} + 1\right)\left(\frac{1}{I_g(\lambda)} + \frac{1}{|\lambda|} + 1\right) \| z \|_{\mathcal{H}_C} \| f \|_{\mathcal{H}_C} + \epsilon \| z \|_{\mathcal{H}_C}^2.
	\end{align*}
	Using \eqref{t3} and \eqref{t7}, we obtain
	\begin{equation}\label{a2}
		|a_2|\leq C\|\phi_x + \psi\|^2 + C\| \eta \|_{L^2_g}^2\leq C \left(\frac{1}{I_g(\lambda)} + 1\right)\left(\frac{1}{I_g(\lambda)}\!+\!\frac{1}{|\lambda|}\!+\!1\right)\|z\|_{\mathcal{H}_C}\|f\|_{\mathcal{H}_C}.
	\end{equation}
	Similar to the prove of \eqref{jxsn}, we deduce
	\begin{equation*}
		a_3 = \smallint_0^\infty g(s) \frac{d}{ds} \left( \eta_x(s), \Psi \right) ds = -\smallint_0^\infty g'(s) \left( \eta_x(s), \Psi \right) ds.
	\end{equation*}
	Then by using Young's inequality, we obtain, for any $\epsilon>0$,
	\begin{align*}
		|a_3|\le&-\smallint_0^\infty g'(s)\|\eta_x(s)\|\|\Psi\|ds\\
		\le&-\frac{\epsilon}{2}\smallint_0^\infty g'(s)\|\Psi\|^2ds-\frac{1}{2\epsilon}\smallint_0^\infty g'(s)\|\eta_x\|^2ds\le\frac{g(0)\epsilon}{2}\|\Psi\|^2-\frac{1}{2\epsilon}\smallint_0^\infty g'(s)\|\eta_x\|^2ds.
	\end{align*}
	By taking $\epsilon=\frac1{2g(0)}$, it follows from \eqref{b01} that
	\begin{align*}
		|a_3|  \leq C\|z\|_{\mathcal{H}_C}\|f\|_{\mathcal{H}_C} + \frac{1}{4} \|\Psi\|^2.
	\end{align*}
	Using Young's inequality and \eqref{b01}, we deduce any $\epsilon>0$, there exists a constant $C_{\epsilon} > 0$, independent of $\lambda$, such that
	\begin{align*}
		|a_4| &\leq C|\lambda| \|q\| \|\psi_x\| \leq C_\epsilon|\lambda|^2 \|q\|^2 + \epsilon \|\psi_x\|^2 \leq C_\epsilon|\lambda|^2 \|z\|_{\mathcal{H}_C}\|f\|_{\mathcal{H}_C} + \epsilon  \|z\|_{\mathcal{H}_C}^2.
	\end{align*}
	If $\chi_0=0$, we yield \eqref{t5} by \eqref{Psigj}  the above estimates for $|a_i|$, $i=1,2,\cdots,4$. If $\chi_0\neq0$, we use \eqref{2C}-(d) to rewrite $a_5$ as
	\begin{equation*}
		a_5 = \frac{\rho_2}{b} a_2 + \frac{1}{b} \left(i\lambda \rho_2\Psi - \sigma\theta - \rho_2 f^4, \omega(\phi_x + \psi) + \smallint_0^\infty g(s) \eta_x(s) \, ds \right).
	\end{equation*}
	Using H\"older's and Young's inequality, we get,
	\begin{align*}
		|a_5| &\leq C|a_2| + C |\lambda| \|\Psi\| \left(\|\phi_x+\psi\|+\|\eta\|_{L_g^2}\right) + C\|\theta\|\left(\|\phi_x+\psi\|+\eta\|_{L_g^2}\right) + C \|z\|_{\mathcal{H}_C} \|f\|_{\mathcal{H}_C} \\
		&\leq C |a_2| + \frac{1}{4|\chi_0|} \|\Psi\|^2 + \|\theta\|^2 + C (|\lambda|^2+1)\left(\|\phi_x+\psi\|^2+\|\eta\|_{L_g^2}^2\right)+C \|z\|_{\mathcal{H}_C} \|f\|_{\mathcal{H}_C}.
	\end{align*}
	From \eqref{t2} and  \eqref{a2},  we have for any $\epsilon>0$, there exists a constant $C_{\epsilon} > 0$, independent of $\lambda$, such that
	\begin{align*}
		|a_5|\leq C_\epsilon(|\lambda|^2+1)\left(\frac{1}{I_g(\lambda)}+1\right)\left(\frac{1}{I_g(\lambda)}+\frac{1}{|\lambda|}+1\right)\|z\|_{\mathcal{H}_C} \|f\|_{\mathcal{H}_C} + \epsilon\|z\|_{\mathcal{H}_C}^2 + \frac{1}{4|\chi_0|} \|\Psi\|^2.
	\end{align*}
	Finally, \eqref{t5} follows from \eqref{Psigj} and the above estimates for $|a_i|$, $i=1,2,\cdots,5$.

	\noindent\textbf{Step 6:} for every $\epsilon>0$, there exists a constant $C_{\epsilon} > 0$, independent of $\lambda$, such that
	\begin{align}\label{t6}
		b \|\psi_x\|^2&\leq C_\epsilon |\chi_0| (|\lambda|^2 + 1) \left( \frac{1}{I_g(\lambda)} + 1 \right) \left( \frac{1}{I_g(\lambda)} + \frac{1}{|\lambda|} + 1 \right) \| z \|_{\mathcal{H}_C} \| f \|_{\mathcal{H}_C} \notag\\
		&\quad + C_\epsilon \left( \frac{1}{I_g(\lambda)} + 1 \right) \left( \frac{1}{I_g(\lambda)} + \frac{1}{|\lambda|} + 1 \right) \| z \|_{\mathcal{H}_C} \| f \|_{\mathcal{H}_C} +C_\epsilon|\lambda|^2\|z\|_{\mathcal{H}_C} \|f\|_{\mathcal{H}_C}+ \epsilon\| z \|_{\mathcal{H}_C}^2.
	\end{align}
	
	Taking the $L^2$-inner product of \eqref{2C}-(d) with $\psi$, we have
	\begin{equation}\label{psi1}
		-\rho_2(\Psi, i\lambda\psi) + b\|\psi_x\|^2 + \kappa\omega(\phi_x + \psi, \psi) + \kappa\smallint_{0}^{\infty}g(s)(\eta_x(s), \psi)ds - \sigma(\theta, \psi) = \rho_2(f^4, \psi).
	\end{equation}
	Taking \eqref{2C}-(c) into \eqref{psi1} and using Poincar\'e's, H\"older's and Young's inequalities,  we get
	\begin{align*}
		b\| \psi_x \|^2 &= -\kappa \omega (\phi_x + \psi, \psi) - \kappa \smallint_{0}^{\infty} g(s) (\eta_x(s), \psi)ds + \sigma (\theta, \psi) + \rho_2 (f^4, \psi) + \rho_2 \| \Psi\|^2 + \rho_2 (\Psi, f^3) \\
		&\leq C\|\psi\| ( \| \phi_x + \psi \| +\| \eta \|_{L_g^2}) + \sigma \| \theta \| \| \psi \| + \rho_2 \| f^4 \| \| \psi \| + \rho_2 \| \Psi \|^2 + \rho_2 \| f^3 \| \| \Psi \| \\
		&\leq C \|\psi_x\| (\| \phi_x + \psi \| + \| \eta \|_{L_g^2}) +C\| \theta \| \| \psi_x \| + C \| f^4 \| \| \psi_x \| + \rho_2 \| \Psi \|^2 + \rho_2 \| f^3 \| \| \Psi \| \\
		&\leq C \left( \| \phi_x + \psi \|^2 + \| \eta \|_{L_g^2}^2 \right) + C \| \theta \|^2 + \frac{b}{2} \| \psi_x \|^2 + \rho_2 \| \Psi \|^2 + C \| z \|_{\mathcal{H}_C} \| f \|_{\mathcal{H}_C}.
	\end{align*}
	Then by using \eqref{t2}, \eqref{t3}, \eqref{t7}, \eqref{t5}, and the above estimate, we get \eqref{t6}.
	
	{\bf Finally}, using the estimates got in Steps 1-6, we get for every $\epsilon>0$, there exists a constant $C_{\epsilon} > 0$, independent of $\lambda$, such that
	\begin{align*}
		\|z\|_{\mathcal{H}_C}^2&\leq C_\epsilon |\chi_0| (|\lambda|^2 + 1) \left( \frac{1}{I_g(\lambda)} + 1 \right) \left( \frac{1}{I_g(\lambda)} + \frac{1}{|\lambda|} + 1 \right) \| z \|_{\mathcal{H}_C} \| f \|_{\mathcal{H}_C} \\
		&\quad + C_\epsilon \left( \frac{1}{I_g(\lambda)} + 1 \right) \left( \frac{1}{I_g(\lambda)} + \frac{1}{|\lambda|} + 1 \right) \| z \|_{\mathcal{H}_C} \| f \|_{\mathcal{H}_C} +C_\epsilon|\lambda|^2\|z\|_{\mathcal{H}_C} \|f\|_{\mathcal{H}_C}+ \epsilon\| z \|_{\mathcal{H}_C}^2.
	\end{align*}
	By taking $\epsilon=\frac12$, we get
	\begin{align*}
		\|z\|_{\mathcal{H}_C}^2&\leq C |\chi_0| (|\lambda|^2 + 1) \left( \frac{1}{I_g(\lambda)} + 1 \right) \left( \frac{1}{I_g(\lambda)} + \frac{1}{|\lambda|} + 1 \right) \| z \|_{\mathcal{H}_C} \| f \|_{\mathcal{H}_C} \\
		&\quad + C \left( \frac{1}{I_g(\lambda)} + 1 \right) \left( \frac{1}{I_g(\lambda)} + \frac{1}{|\lambda|} + 1 \right) \| z \|_{\mathcal{H}_C} \| f \|_{\mathcal{H}_C}+C|\lambda|^2\|z\|_{\mathcal{H}_C} \|f\|_{\mathcal{H}_C}.
	\end{align*}
	Then the conclusion follows.
\end{proof}
\begin{proof}[Proof of the conclusion 3 of Theorem \ref{thmboundresolvent}]
	The estimates for the Step 1 to Step 4 are the same as the proofs in the previous part.
	
	\noindent\textbf{Step 5:}  For every $\epsilon>0$, there exists a constant $C_{\epsilon} > 0$, independent of $\lambda$, such that
	\begin{align}\label{w5}
		\|\Psi\|^2 &\leq C|\chi_1||\lambda|^2\left(\frac{1}{I_g(\lambda)}+1\right)\left(\frac{1}{I_{{g}}(\lambda)}+\frac{1}{|\lambda|}+1\right)\|z\|_{\mathcal{H}_C} \|f\|_{\mathcal{H}_C} \nonumber\\
		&\quad + C_\epsilon\left(\frac{1}{I_g(\lambda)}+1\right)\left(\frac{1}{I_{{g}}(\lambda)}+\frac{1}{|\lambda|}+1\right)\|z\|_{\mathcal{H}_C} \|f\|_{\mathcal{H}_C} + \epsilon\|z\|_{\mathcal{H}_C}^2.
	\end{align}
	
	Taking the derivative of both sides of the equation \eqref{2C}-(a)  with respect to $x$ and then taking the $L^2$-inner product with $\Psi$, we get
	\begin{equation}\label{5.1}
		i \lambda(\phi_x,\Psi) = -(\Phi,\Psi_x) + (f^1_x,\Psi,).
	\end{equation}
	Taking the inner $L^2$-product of \eqref{2C}-(c) with $\Psi$ , we obtain
	\begin{equation}\label{5.2}
		i \lambda (\psi,\Psi) = \|\Psi\|^2 + (f^3,\Psi).
	\end{equation}
	Taking the inner $L^2_g$-product of \eqref{2C}-(g) with $\tilde{\Psi}$,  we have
	\begin{equation}\label{5.3}
		\smallint_0^\infty g(s) (i \lambda \eta_x(s),\Psi) \, ds=- \smallint_0^\infty g(s) (\eta_{sx}(s),\Psi) \, ds + \ell(\Phi_x,\Psi) + \ell\|\Psi\|^2 + \smallint_0^\infty g(s) (f_x^7(s),\Psi) \, ds.
	\end{equation}
	Taking the $L^2$-inner product of \eqref{2C}-(d) with $\omega (\phi_x + \psi) + \smallint_0^\infty g(s) \eta_x(s) \, ds$ and using \eqref{5.1}, \eqref{5.2} and \eqref{5.3}, since $\ell+\omega=1$, we obtain
	\begin{align}\label{5.4}
		\|\Psi\|^2 = &-\frac{b}{\rho_2} \left( \omega(\phi_x + \psi) + \smallint_0^\infty g(s) \eta_x(s) \, ds,\psi_{xx}\right)+ \frac{\kappa}{\rho_2} \left\|\omega(\phi_x + \psi) + \smallint_0^\infty g(s) \eta_x(s) \, ds\right\|^2\notag\\
		&- \frac{\sigma}{\rho_2} \left( \omega(\phi_x + \psi) + \smallint_0^\infty g(s) \eta_x(s) \, ds,\theta\right) - \left( \omega(\phi_x + \psi) + \smallint_0^\infty g(s) \eta_x(s) \, ds,f^4\right)\notag\\
		&+(\Phi,\Psi_x) - \omega(f_x^1, \Psi) - \omega( f^3,\Psi) + \smallint_0^\infty g(s) (\eta_{sx}(s),\Psi ) ds - \smallint_0^\infty g(s) (f_x^7(s),\Psi) \, ds.
	\end{align}
	Taking the derivative of both sides of the equation \eqref{2C}-(c)  with respect to $x$ and then taking the $L^2$-inner product with $\Phi$, we get
	\begin{equation}\label{5.6}
		(\Phi,i\lambda \psi_x) = (\Phi,\Psi_x) + (\Phi,f_x^3).
	\end{equation}
	Taking the $L^2$-inner product of \eqref{2C}-(f) with $\psi_x$, we have
	\begin{equation}\label{5.7}
		( \theta_x,\psi_x) = \tau(f^6,\psi_x)-	\tau (i \lambda q, \psi_x) - \beta ( q,\psi_x).
	\end{equation}
	
	Taking the $L^2$-inner product of \eqref{2C}-(b) with $\psi_x$ and using \eqref{5.6}, \eqref{5.7} and \eqref{2C}-(c), we obtain
	\begin{align}\label{5.8}
		(\Phi,\Psi_x) =& \frac{\kappa}{\rho_1} \left(\omega(\phi_x + \psi) + \smallint_0^\infty g(s) \eta_x(s) \, ds,\psi_{xx}\right)- \frac{\sigma\tau}{\rho_1} ( i \lambda q,\psi_x ) -\frac{\sigma \beta}{\rho_1} (q,\psi_x)\notag\\
		&+\frac{\sigma \tau}{\rho_1} (f^6,\psi_x)- (\Phi,f_x^3)- (f^2,\psi_x)\notag\\
		=& \frac{\kappa}{\rho_1} \left(\omega(\phi_x + \psi) + \smallint_0^\infty g(s) \eta_x(s) \, ds,\psi_{xx}\right)-\frac{\sigma\tau}{\rho_1} (q_x,i \lambda \psi ) -\frac{\sigma \beta}{\rho_1} (q,\psi_x)\notag\\
		&+\frac{\sigma \tau}{\rho_1} (f^6,\psi_x)- ( \Phi,f_x^3)- (f^2,\psi_x)\notag\\
		=& \frac{\kappa}{\rho_1} \left(\omega(\phi_x + \psi) + \smallint_0^\infty g(s) \eta_x(s) \, ds,\psi_{xx}\right)- \frac{\sigma\tau}{\rho_1} (q_x,\Psi+f^3 ) -\frac{\sigma \beta}{\rho_1} (q,\psi_x)\notag\\
		&+\frac{\sigma \tau}{\rho_1} (f^6,\psi_x)- ( \Phi,f_x^3)- (f^2,\psi_x).
	\end{align}
	Taking the $L^2$-inner product of \eqref{2C}-(e) with $\Psi$, we obtain
	\begin{align}\label{5.9}
		(q_x,\Psi)=&-\rho_3(i\lambda\theta,\Psi)-\sigma(\Phi_x,\Psi)-\sigma\|\Psi\|^2+\rho_3(f^5,\Psi)\notag\\
		=&\rho_3(\theta,i\lambda\Psi)-\sigma(\Phi_x,\Psi)-\sigma\|\Psi\|^2+\rho_3(f^5,\Psi)\notag\\
		=&\frac{\rho_3}{\sigma}\left(\sigma\theta-\kappa \left[\omega(\phi_x + \psi) + \smallint_0^\infty g(s) \eta_x(s) \, ds \right]+\kappa \left[\omega(\phi_x + \psi) + \smallint_0^\infty g(s) \eta_x(s) \, ds \right],i\lambda\Psi\right)\notag\\
		&-\sigma(\Phi_x,\Psi)-\sigma\|\Psi\|^2+\rho_3(f^5,\Psi)\notag\\
		=&\frac{\rho_3\kappa}{\sigma}\left(\omega(\phi_x + \psi) + \smallint_0^\infty g(s) \eta_x(s) \, ds ,i\lambda\Psi\right)-\sigma(\Phi_x,\Psi)-\sigma\|\Psi\|^2+\rho_3(f^5,\Psi)\notag\\
		&+\underbrace{\frac{\rho_3}{\sigma}\left(\sigma\theta-\kappa \left[\omega(\phi_x + \psi) + \smallint_0^\infty g(s) \eta_x(s) \, ds \right],i\lambda\Psi\right)}_{I_1}.
	\end{align}
	Using \eqref{2C}-(d), we deduce
	\begin{align}\label{5.10}
		I_1=& -\frac{\rho_3}{\sigma \rho_2} \left(\kappa \left[ \omega(\phi_x + \psi) \!+\! \smallint_0^\infty g(s) \eta_x(s) \, ds \right] - \sigma \theta, b \psi_{xx} \!-\! \kappa \left[ \omega(\phi_x + \psi) \!+\! \smallint_0^\infty g(s) \eta_x(s) \, ds \right] + \sigma \theta + \rho_2 f^4\right)\notag\\
		= &\underbrace{\frac{\rho_3 b}{\sigma \rho_2} \left(\kappa \left[ \omega(\phi_x + \psi) \!+\! \smallint_0^\infty g(s) \eta_x(s) \, ds \right]_x - \sigma \theta_x, \psi_{x}\right)}_{I_2} + \frac{\rho_3}{\sigma \rho_2} \left\|\kappa \left[ \omega(\phi_x + \psi) \!+ \!\smallint_0^\infty g(s) \eta_x(s) \, ds \right] - \sigma \theta\right\|^2\notag\\
		&-\frac{\rho_3}{\sigma} \left(\kappa \left[ \omega(\phi_x + \psi) + \smallint_0^\infty g(s) \eta_x(s) \, ds \right] - \sigma \theta, f^4\right).
	\end{align}
	Combing \eqref{2C}-(b) and (c), we yield
	\begin{align}\label{5.11}
		I_2&=\frac{\rho_3 b}{\sigma \rho_2} (i \lambda \rho_1 \Phi - \rho_1 f^2, \psi_x)= \frac{\rho_1 \rho_3 b}{\sigma \rho_2} (i \lambda \Phi, \psi_x) - \frac{\rho_1 \rho_3 b}{\sigma \rho_2} (f^2, \psi_x)\notag\\
		&= -\frac{\rho_1 \rho_3 b}{\sigma \rho_2} (\Phi, \Psi_x + f^3_x) - \frac{\rho_1 \rho_3 b}{\sigma \rho_2} (f^2, \psi_x).
	\end{align}
	Combining \eqref{5.8}-\eqref{5.11}, we get
	\begin{align}\label{5.12}
		\left(1+\frac{\sigma^2\tau}{\rho_1}-\frac{b\rho_3\tau }{\rho_2} \right)&(\Phi,\Psi_x)=\frac{\kappa}{\rho_1} \left(\omega(\phi_x + \psi) + \smallint_0^\infty g(s) \eta_x(s) \, ds,\psi_{xx}\right)+\frac{\sigma\tau}{\rho_1}(q,f^3_x)-\frac{\sigma \beta}{\rho_1} (q,\psi_x)+\frac{\sigma \tau}{\rho_1} (f^6,\psi_x)\notag\\
		&-\frac{\kappa\rho_3\tau}{\rho_1}\left(\omega(\phi_x + \psi) + \smallint_0^\infty g(s) \eta_x(s) \, ds ,i\lambda\Psi\right)+\frac{\sigma^2\tau}{\rho_1}\|\Psi\|^2-\frac{\sigma\tau\rho_3}{\rho_1}(f^5,\Psi)\notag\\
		&-\frac{\rho_3\tau}{\rho_1 \rho_2} \left\|\kappa \left[ \omega(\phi_x + \psi) + \smallint_0^\infty g(s) \eta_x(s) \, ds \right] - \sigma \theta\right\|^2+\left(\frac{b\rho_3\tau }{\rho_2}-1\right) (\Phi,  f^3_x) \notag\\
		&+\frac{\rho_3\tau}{\rho_1} \left(\kappa \left[ \omega(\phi_x + \psi) + \smallint_0^\infty g(s) \eta_x(s) \, ds \right]- \sigma \theta, f^4\right)+ \left(\frac{b\rho_3\tau }{\rho_2}-1\right)(f^2, \psi_x) .
	\end{align}
	Multiplying the both sides of \eqref{5.4} by $\left(1+\frac{\sigma^2\tau}{\rho_1}-\frac{b\rho_3\tau }{\rho_2} \right)$ and then adding the result to \eqref{5.12}, we deduce
	\begin{align}\label{Psigj1}
		\left(1-\frac{b\rho_3\tau }{\rho_2} \right)\|\Psi\|^2=&j_1+j_2+\underbrace{\left[\frac{\kappa}{\rho_1}-\frac{b}{\rho_2}\left(1+\frac{\sigma^2\tau}{\rho_1}-\frac{b\rho_3\tau }{\rho_2} \right)\right]}_{=\chi_1+\frac{\tau b\rho_3\kappa}{\rho_1\rho_2}}\left(\omega(\phi_x + \psi) + \smallint_0^\infty g(s) \eta_x(s) \, ds,\psi_{xx}\right)\notag\\
		&-\frac{\kappa\rho_3\tau}{\rho_1}\left(\omega(\phi_x + \psi) + \smallint_0^\infty g(s) \eta_x(s) \, ds,i\lambda\Psi \right),
	\end{align}
	where
	\begin{align*}
		j_1 =&\left(1+\frac{\sigma^2\tau}{\rho_1}-\frac{b\rho_3\tau }{\rho_2} \right)\left[\frac{\kappa}{\rho_2} \left\|\omega(\phi_x + \psi) + \smallint_0^\infty g(s) \eta_x(s) \, ds\right\|^2\right.\notag\\
		&- \frac{\sigma}{\rho_2} \left( \omega(\phi_x + \psi) + \smallint_0^\infty g(s) \eta_x(s) \, ds,\theta\right) - \left( \omega(\phi_x + \psi) + \smallint_0^\infty g(s) \eta_x(s) \, ds,f^4\right)\\
		&-\left.\omega( f_x^1+f^3,\Psi)- \smallint_0^\infty g(s) (f_x^7(s),\Psi) \, ds\right]+\frac{\sigma\tau}{\rho_1}(q,f^3_x)-\frac{\sigma \beta}{\rho_1} (q,\psi_x)+\frac{\sigma \tau}{\rho_1} (f^6,\psi_x)-\frac{\sigma\tau\rho_3}{\rho_1}(f^5,\Psi)\\
		&-\frac{\rho_3\tau}{\rho_1 \rho_2} \left\|\kappa \left[ \omega(\phi_x + \psi) + \smallint_0^\infty g(s) \eta_x(s) \, ds \right] - \sigma \theta\right\|^2+\left(\frac{b\rho_3\tau }{\rho_2}-1\right) (\Phi,  f^3_x) \\
		&+\frac{\rho_3\tau}{\rho_1} \left(\kappa \left[ \omega(\phi_x + \psi) + \smallint_0^\infty g(s) \eta_x(s) \, ds \right]- \sigma \theta, f^4\right)+ \left(\frac{b\rho_3\tau }{\rho_2}-1\right)(f^2, \psi_x) ,\\
		j_2 =& \left(1+\frac{\sigma^2\tau}{\rho_1}-\frac{b\rho_3\tau }{\rho_2} \right)\smallint_0^\infty g(s) (\eta_{sx}(s),\Psi)ds.
	\end{align*}
	Using\eqref{2C}-(d) and\eqref{Psigj1}, we have
	\begin{equation}\label{Psigj2}
		\begin{aligned}
			\|\Psi\|^2&=\frac{\rho_2}{\rho_2-b\rho_3\tau}\left[\tilde{j_1}+j_2+\frac{\rho_2\chi_1}{b}\underbrace{\left(\omega(\phi_x + \psi) + \smallint_0^\infty g(s) \eta_x(s) \, ds,i\lambda\Psi \right)}_{j_3}\right],
		\end{aligned}
	\end{equation}
	where \begin{align*}
		\tilde{j_1} =&\frac{\kappa^2}{b\rho_1} \left\|\omega(\phi_x + \psi) + \smallint_0^\infty g(s) \eta_x(s) \, ds \right\|^2- \left(\frac{\sigma\kappa}{b\rho_1}+\frac{\rho_3\tau\sigma\kappa}{\rho_1}\right) \left( \omega(\phi_x + \psi) + \smallint_0^\infty g(s) \eta_x(s) \, ds, \theta \right)  \\
		& + \left(\frac{\rho_3\tau\kappa}{\rho_1}-\frac{\kappa\rho_2}{b\rho_1}\right)\left( \omega(\phi_x + \psi) + \smallint_0^\infty g(s) \eta_x(s) \, ds, f^4 \right)+\frac{\sigma\tau}{\rho_1}(q, f^3_x)-\frac{\sigma \beta}{\rho_1} (q, \psi_x)+\frac{\sigma \tau}{\rho_1} (f^6, \psi_x) \\
		&-\frac{\sigma\tau\rho_3}{\rho_1}(f^5, \Psi)-\left(1+\frac{\sigma^2\tau}{\rho_1}-\frac{b\rho_3\tau }{\rho_2} \right) \left[\omega( f_x^1+f^3, \Psi)+ \smallint_0^\infty g(s) (f_x^7(s), \Psi) \, ds \right] \\
		&-\frac{\rho_3\tau}{\rho_1 \rho_2} \left\|\kappa \left[ \omega(\phi_x + \psi) + \smallint_0^\infty g(s) \eta_x(s) \, ds \right] - \sigma \theta\right\|^2+\left(\frac{b\rho_3\tau }{\rho_2}-1\right) (\Phi, f^3_x)+ \left(\frac{b\rho_3\tau }{\rho_2}-1\right)(f^2, \psi_x).
	\end{align*}
	By using \eqref{b01}, \eqref{t2}, \eqref{t3}, \eqref{t7}, and Cauchy's inequality, we have for any $\epsilon>0$, there exists a constant $C_{\epsilon} > 0$, independent of $\lambda$, such that
	\begin{align*}\label{a01}
		|\tilde{j_1}| &\leq C\|\phi_x + \psi\|^2 + C\| \eta \|_{L^2_g}^2 +C\|\theta\| \left(\omega \|\phi_x + \psi\| + \| \eta \|_{L^2_g}\right) +C\|\theta\|^2+ C \| q \| \| \psi_x \|+ C \| z \|_{\mathcal{H}_C} \| f \|_{\mathcal{H}_C} \\
		&\leq C_\epsilon \left(\|\phi_x + \psi\|^2 + \| \eta \|_{L^2_g}^2 + \| q \|^2\right) + C\| \theta \|^2 + \epsilon \| \psi_x \|^2 + C \| z \|_{\mathcal{H}_C} \| f \|_{\mathcal{H}_C} \\
		&\leq C_\epsilon \left(\frac{1}{I_g(\lambda)} + 1\right)\left(\frac{1}{I_g(\lambda)} + \frac{1}{|\lambda|} + 1\right) \| z \|_{\mathcal{H}_C} \| f \|_{\mathcal{H}_C} + \epsilon \| z \|_{\mathcal{H}_C}^2.
	\end{align*}
	Similar to the prove of \eqref{jxsn}, we deduce
	\begin{equation*}
		j_2 = \left(1+\frac{\sigma^2\tau}{\rho_1}-\frac{b\rho_3\tau }{\rho_2} \right)\smallint_0^\infty g(s) \frac{d}{ds} \left( \eta_x(s), \Psi \right) ds = -\left(1+\frac{\sigma^2\tau}{\rho_1}-\frac{b\rho_3\tau }{\rho_2} \right)\smallint_0^\infty g'(s) \left( \eta_x(s), \Psi \right) ds.
	\end{equation*}
	Then by using Young's inequality, we obtain, for any $\epsilon>0$,
	\begin{align*}
		|j_2|\le&-\left|1+\frac{\sigma^2\tau}{\rho_1}-\frac{b\rho_3\tau }{\rho_2} \right|\smallint_0^\infty g'(s)\|\eta_x(s)\|\|\Psi\|ds\\
		\le&-\frac{\epsilon}{4}\smallint_0^\infty g'(s)\|\Psi\|^2ds-\frac{C}{\epsilon}\smallint_0^\infty g'(s)\|\eta_x\|^2ds\le\frac{g(0)\epsilon}{4}\|\Psi\|^2-\frac{C}{\epsilon}\smallint_0^\infty g'(s)\|\eta_x\|^2ds.
	\end{align*}
	By taking $\epsilon=\frac{|\rho_2-b\rho_3\tau|}{\rho_2g(0)}$, it follows from \eqref{b01} that
	\begin{equation*}
		|j_2|\leq C \| z \|_{\mathcal{H}_C} \| f \|_{\mathcal{H}_C}+\frac{|\rho_2-b\rho_3\tau|}{4\rho_2}\|\Psi\|^2.
	\end{equation*}
	If $\chi_1=0$, we get the estimate \eqref{w5} by estimates $\tilde{j_1}$, $j_2$ and \eqref{Psigj2}. If $\chi_1\neq0$, use H\"older's and Young's inequality, \eqref{t3}, and \eqref{t7}, we get
	\begin{align*}
		|j_3| &\leq C |\lambda| \|\Psi\| \left(\|\phi_x+\psi\|+\|\eta\|_{L_g^2}\right) \\
		&\le C|\lambda|^2\left(\|\phi_x+\psi\|^2+\|\eta\|_{L_g^2}^2\right)+\frac{b|\rho_2-b\rho_3\tau|}{4\rho_2^2|\chi_1|} \|\Psi\|^2\\
		&\leq C|\lambda|^2\left(\frac{1}{I_g(\lambda)}+1\right)\left(\frac{1}{I_g(\lambda)}+\frac{1}{|\lambda|}+1\right)\|z\|_{\mathcal{H}_C} \|f\|_{\mathcal{H}_C}  + \frac{b|\rho_2-b\rho_3\tau|}{4\rho_2^2|\chi_1|} \|\Psi\|^2.
	\end{align*}
	Finally, \eqref{w5} follows from \eqref{Psigj2} and the above estimates for $|j_i|$, $i=1,2,3$.

	\noindent\textbf{Step 6:}  For every $\epsilon>0$, there exists a constant $C_{\epsilon} > 0$, independent of $\lambda$, such that
	\begin{align}\label{w6}
		b \|\psi_x\|^2&\leq C |\chi_1| |\lambda|^2 \left( \frac{1}{I_g(\lambda)} + 1 \right) \left( \frac{1}{I_g(\lambda)} + \frac{1}{|\lambda|} + 1 \right) \| z \|_{\mathcal{H}_C} \| f \|_{\mathcal{H}_C} \notag\\
		&\quad + C_\epsilon \left( \frac{1}{I_g(\lambda)} + 1 \right) \left( \frac{1}{I_g(\lambda)} + \frac{1}{|\lambda|} + 1 \right) \| z \|_{\mathcal{H}_C} \| f \|_{\mathcal{H}_C} + \epsilon\| z \|_{\mathcal{H}_C}^2.
	\end{align}
	
	Taking the $L^2$-inner product of \eqref{2C}-(d) with $\psi$, we have
	\begin{equation}\label{psi01}
		-\rho_2(\Psi, i\lambda\psi) + b\|\psi_x\|^2 + \kappa\omega(\phi_x + \psi, \psi) + \kappa\smallint_{0}^{\infty}g(s)(\eta_x(s), \psi)ds - \sigma(\theta, \psi) = \rho_2(f^4, \psi).
	\end{equation}
	Taking \eqref{2C}-(c) into \eqref{psi01} and using Poincar\'e's, H\"older's and Young's inequalities,  we get
	\begin{align*}
		b\| \psi_x \|^2 &= -\kappa \omega (\phi_x + \psi, \psi) - \kappa \smallint_{0}^{\infty} g(s) (\eta_x(s), \psi)ds + \sigma (\theta, \psi) + \rho_2 (f^4, \psi) + \rho_2 \| \Psi\|^2 + \rho_2 (\Psi, f^3) \\
		&\leq C\|\psi\| ( \| \phi_x + \psi \| +\| \eta \|_{L_g^2}) + \sigma \| \theta \| \| \psi \| + \rho_2 \| f^4 \| \| \psi \| + \rho_2 \| \Psi \|^2 + \rho_2 \| f^3 \| \| \Psi \| \\
		&\leq C \|\psi_x\| (\| \phi_x + \psi \| + \| \eta \|_{L_g^2}) +C\| \theta \| \| \psi_x \| + C \| f^4 \| \| \psi_x \| + \rho_2 \| \Psi \|^2 + \rho_2 \| f^3 \| \| \Psi \| \\
		&\leq C \left( \| \phi_x + \psi \|^2 + \| \eta \|_{L_g^2}^2 \right) + C \| \theta \|^2 + \frac{b}{2} \| \psi_x \|^2 + \rho_2 \| \Psi \|^2 + C \| z \|_{\mathcal{H}_C} \| f \|_{\mathcal{H}_C}.
	\end{align*}
	Then by using \eqref{t2}, \eqref{t3}, \eqref{t7}, \eqref{w5}, and the above estimate, we get \eqref{w6}.
	
	{\bf Finally}, using the estimates got in Steps 1-6, we get for every $\epsilon>0$, there exists a constant $C_{\epsilon} > 0$, independent of $\lambda$, such that
	\begin{align*}
		\|z\|_{\mathcal{H}_C}^2&\leq C |\chi_1| |\lambda|^2 \left( \frac{1}{I_g(\lambda)} + 1 \right) \left( \frac{1}{I_g(\lambda)} + \frac{1}{|\lambda|} + 1 \right) \| z \|_{\mathcal{H}_C} \| f \|_{\mathcal{H}_C} \notag\\
		&\quad + C_\epsilon \left( \frac{1}{I_g(\lambda)} + 1 \right) \left( \frac{1}{I_g(\lambda)} + \frac{1}{|\lambda|} + 1 \right) \| z \|_{\mathcal{H}_C} \| f \|_{\mathcal{H}_C} + \epsilon\| z \|_{\mathcal{H}_C}^2.
	\end{align*}
	By taking $\epsilon=\frac12$, we get
	\begin{align*}
		\|z\|_{\mathcal{H}_C}^2&\leq C |\chi_1| |\lambda|^2\left( \frac{1}{I_g(\lambda)} + 1 \right) \left( \frac{1}{I_g(\lambda)} + \frac{1}{|\lambda|} + 1 \right) \| z \|_{\mathcal{H}_C} \| f \|_{\mathcal{H}_C} \notag\\
		&\quad + C \left( \frac{1}{I_g(\lambda)} + 1 \right) \left( \frac{1}{I_g(\lambda)} + \frac{1}{|\lambda|} + 1 \right) \| z \|_{\mathcal{H}_C} \| f \|_{\mathcal{H}_C}.
	\end{align*}
	Then the conclusion follows.
\end{proof}
\subsection{Spectral Properties on the Imaginary Axis}\label{sec2.2.2}
The main result of this part is the following theorem.
\begin{theorem}\label{thmap}
	Let $g$ satisfy the Assumption \ref{g1} and the ${\delta}$-condition \eqref{delta c}. Then, $i\mathbb{R}\subset\rho(\mathcal{A}_F)$ and $i\mathbb{R}\subset\rho(\mathcal{A}_C)$.
\end{theorem}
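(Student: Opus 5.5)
We argue by contradiction, via Lemma \ref{Thm:SpectralApprox}. Since $\mathcal{H}_F$ and $\mathcal{H}_C$ are Hilbert (hence reflexive) spaces and, by Theorem \ref{wellposed}, $\mathcal{A}_F$ and $\mathcal{A}_C$ generate $C_0$-semigroups, any spectral point on $i\mathbb{R}$ lies in the approximate spectrum. The point $0$ is treated separately: $0\in\rho(\mathcal{A}_F)$ and $0\in\rho(\mathcal{A}_C)$ follow as in the surjectivity part of the proof of Theorem \ref{wellposed}. For $\lambda=0$ the $\eta$-equation gives $\eta(s)=s(\Phi+\tilde\Psi)+\int_0^s f^6$ with $\Phi,\Psi$ prescribed, and Lemma \ref{lemggj} (from the $\delta$-condition) ensures this lies in $L^2_g$. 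The remaining elliptic system for $(\phi,\psi,\theta)$, or $(\phi,\psi,\theta,q)$ in the Cattaneo case, is then solved by Lax--Milgram, with the zero-mean constraints handled by $H^1_*$ and $L^2_*$.

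Suppose now that $i\lambda_0\in\sigma(\mathcal{A}_F)$ with $\lambda_0\neq0$. Then there are $\lambda_n\to\lambda_0$ and $z_n\in D(\mathcal{A}_F)$ with $\|z_n\|_{\mathcal{H}_F}=1$ and $f_n:=(i\lambda_nI-\mathcal{A}_F)z_n\to0$. The plan is to rerun Steps 1--6 of the proof of Theorem \ref{thmboundresolvent}. Those steps never used $i\mathbb{R}\subset\rho(\mathcal{A}_F)$; they only used the identity $(i\lambda I-\mathcal{A}_F)z=f$, with constants uniform for $\lambda$ in compact sets away from $0$. Since $|\lambda_n|$ stays bounded and bounded away from $0$, and $I_g(\lambda_n)\to I_g(\lambda_0)>0$ by Lemma \ref{Ig} and continuity of $I_g$, every prefactor $\bigl(\frac1{I_g}+1\bigr)\bigl(\frac1{I_g}+\frac1{|\lambda|}+1\bigr)$, including those carrying $|\chi_0|(1+|\lambda|^2)$, is bounded uniformly in $n$. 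The final estimate in Step 6 then reads
\[
\|z_n\|_{\mathcal{H}_F}^2\le C\|z_n\|_{\mathcal{H}_F}\|f_n\|_{\mathcal{H}_F}+\tfrac12\|z_n\|_{\mathcal{H}_F}^2,
\]
hence $\|z_n\|_{\mathcal{H}_F}\to0$, which contradicts $\|z_n\|_{\mathcal{H}_F}=1$.

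For $\mathcal{A}_C$ we use conclusion 2's chain of estimates in the same way. Step 2 there gives $\|\theta_n\|^2\le\epsilon+C_\epsilon\|f_n\|$, and the extra term $C|\lambda|^2\|z\|\|f\|$ is harmless because $\lambda_n$ is bounded. Choosing $\epsilon$ small again forces $z_n\to0$, a contradiction.

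The main point to check is that the multiplier computations in Steps 1--6 really rely only on the resolvent equation and on $z\in D(\mathcal{A})$, never on existence of the inverse. A second delicate point is the uniformity of constants near $\lambda_0$, in particular the positivity and continuity of $I_g$ at $\lambda_0\neq0$, which comes from $\tilde g\in L^1$ and dominated convergence. The case $\lambda_0=0$ is where the divisions by $\lambda$ in \eqref{eta} fail; this is why it is handled by the direct solvability argument above rather than by the resolvent estimates, and it is where Lemma \ref{lemggj} enters essentially.
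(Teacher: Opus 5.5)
Your proof is correct. For $\lambda_0\neq0$ it coincides with the paper's Case 2, which also feeds an approximate eigenvector sequence into the estimates of Theorem~\ref{thmboundresolvent}. In the paper $\lambda_n\equiv\lambda_*$ is fixed, so the continuity of $I_g$ you invoke is not needed. Your observation that Steps 1--6 are a priori estimates, valid for any $z\in D(\mathcal{A})$ solving $(i\lambda-\mathcal{A})z=f$, is what keeps that citation from being circular: Theorem~\ref{thmboundresolvent} is stated under the very hypothesis $i\mathbb{R}\subset\rho(\mathcal{A})$ being proved.

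The real difference is at $\lambda=0$. The paper stays in the contradiction scheme:
\begin{itemize}
\item from \eqref{ed}-(a),(c) it gets $\Phi_n,\Psi_n\to0$;
\item then \eqref{ed}-(f) gives $\eta_{n,s}\to0$ in $L^2_g$;
\item then \eqref{eta_x} gives $\|\eta_n\|_{L^2_g}\le C\|\eta_{n,s}\|_{L^2_g}\to0$;
\item then $\theta_n\to0$, passing through $q_n\to0$ in the Cattaneo case;
\item finally, testing (b),(d) with $\phi_n,\psi_n$ closes the argument.
\end{itemize}
That route is shorter and needs no domain bookkeeping.

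Your constructive route proves $0\in\rho(\mathcal{A})$ outright. The price is verifying the zero-mean compatibility conditions and that $\eta(s)=s(\Phi+\tilde\Psi)+\int_0^s f^6\in D(\mathbb{L})$. Lemma~\ref{lemggj} gives the latter, provided you also apply it to the $s$-independent function $\Phi+\tilde\Psi$ to get $\int_0^\infty s^2 g(s)\,ds<\infty$.

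Your stationary system is in fact triangular. First $\Phi=-f^1$ and $\Psi=-f^3$, and $\eta$ is explicit. Then $\theta$ comes from a Neumann problem with zero-mean data; in the Cattaneo case, $q$ is obtained by integration and then $\theta$. So Lax--Milgram is needed only for $(\phi,\psi)$, where coercivity of $\kappa\omega\|\phi_x+\psi\|^2+b\|\psi_x\|^2$ on $H_0^1\times H^1_*$ follows from Poincar\'e--Wirtinger. In both routes, Lemma~\ref{lemggj} is what controls $\eta$ at $\lambda=0$.
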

\begin{proof}
	\noindent\textbf{Firstly, we show $i\mathbb{R}\subset\rho(\mathcal{A}_F)$.} We recall the definition $ \sigma_{ap}(\mathcal{A}_F)$:
	\begin{align}\label{defapp}
		\sigma_{ap}(\mathcal{A}_F):= \{ \lambda \in \mathbb{C}: \exists z_{n} \in {D}(\mathcal{A}_F), \|z_{n}\|_{\mathcal{H}_F} = 1 \hbox{ and } i \lambda(I - \mathcal{A}_F)z_{n} \to 0 ~\hbox{in} ~\mathcal{H}_F\},
	\end{align}
	where the limit is taken as $n\to\infty$, and throughout the following proof, all limits adhere to the same limiting process.
	
	Suppose by contradiction that $i\lambda_{*}\in\sigma(\mathcal{A}_F)$ for some $\lambda_{*}\in\mathbb{R}$. Then, by Lemma \ref{Thm:SpectralApprox}, we obtain $\lambda_{*}\in\sigma_{\sigma p}(\mathcal{A}_F)$. By \eqref{defapp}, there exists $z_{n}=(\phi_{n},\Phi_{n},\psi_{n},\Psi_{n},\theta_{n},\eta_{n})\in D(\mathcal{A}_F)$ such that $\|z_{n}\|_{\mathcal{H}_F}=1$ for all $n\in\mathbb{N}$ and
	\begin{equation}\label{eq:3.12}
		f_{n}:=i\lambda_{*}z_{n}-\mathcal{A}_Fz_{n}\to 0\text{ in }\mathcal{H}_F.
	\end{equation}
	Setting $f_{n}=(f^{1}_{n},f^{2}_{n},f^{3}_{n},f^{4}_{n},f^{5}_{n},f^{6}_{n})$, we can write \eqref{eq:3.12} in terms of its components:
	\begin{equation}\label{ed}
		\begin{cases}
			\text{(a):~ }f_n^1=i\lambda_{*}\phi_n-\Phi_n\rightarrow0, &\hbox{in  }H_0^1,\\ \text{(b):~}f_n^2=i\lambda_{*}\rho_1\Phi_n-\kappa[\omega(\phi_{n,x}+\psi_n)+\smallint_0^{\infty}g(s)\eta_{n,x}(s)ds]_x+\sigma\theta_{n,x}\rightarrow0&\hbox{in  }L^2,\\
			\text{(c):~}f_n^3=i\lambda_{*}\psi_n-\Psi_n\rightarrow0&\hbox{in  }H_*^1,\\
			\text{(d): ~}f_n^4=i\lambda_{*}\rho_2\Psi_n-b\psi_{n,xx}+\kappa[\omega(\phi_{n,x}+\psi_n)+\smallint_0^{\infty}g(s)\eta_{n,x}(s)ds]-\sigma\theta_n\rightarrow0 \,&\hbox{in   }L_*^2,\\
			\text{(e): ~}f_n^5=i\lambda_{*}\rho_3\theta_n-\beta\theta_{n,xx}+\sigma(\Phi_{n,x}+\Psi_n)\rightarrow0 \,&\hbox{in  }L_*^2,\\
			\text{(f):~ }f_n^6=i\lambda_{*}\eta_n-\eta_{n,s}-(\Phi_n+\tilde{\Psi}_n)\rightarrow0 \,&\hbox{in }L_g^2.
		\end{cases}
	\end{equation}
	We will split the proof into two cases as follows.
	
	\textit{Case 1}: $\lambda_{*}=0$. In this case, we immediately obtain from \eqref{ed}-(a) and  \eqref{ed}-(c) the following convergences:		
	\begin{equation}\label{eq:3.14}
		\Phi_{n}\to 0\text{ in }H^{1}_{0},\quad \Psi_{n}\to 0\text{ in }H^{1}_{*}.
	\end{equation}
	Then by \eqref{ed}-(f), we get
	\begin{align}\label{bccoveta} \|\eta_{n,s}\|_{L^2_g}\le&\|\eta_{n,s}+\Phi_n+\tilde{\Psi}_n\|_{L^2_g}+\|\Phi_n\|_{L^2_g}+\|\tilde{\Psi}_n\|_{L^2_g}\notag\\
		=&\|\eta_{n,s}+\Phi_n+\tilde{\Psi}_n\|_{L^2_g}+\left(\smallint_0^\infty g(s)\|\Phi_{n,x}\|^2ds\right)^{\frac12}+\left(\smallint_0^\infty g(s)\|\Psi_{n}\|^2ds\right)^{\frac12}\notag\\
		=&\|\eta_{n,s}+\Phi_n+\tilde{\Psi}_n\|_{L^2_g}+\left(\smallint_0^\infty g(s)ds\right)^{\frac12}\left(|\Phi_{n,x}\|+\|\Psi_{n}\|\right)\to0.
	\end{align}
	Taking into account that $\eta_n \in D(\mathbb{L})$ and \eqref{bccoveta}, we can apply \eqref{eta_x} to get
	\begin{equation}\label{eq:eta_norm}
		\|\eta_n\|_{L_g^2} \leq \sqrt{\smallint_0^\infty g(s) \left( \smallint_0^s \|\eta_{n,xs}(\tau)\| \, d\tau \right)^2 \, ds} \leq C\|\eta_{ns}\|_{L_g^2} \to 0.
	\end{equation}
	Next, taking account \eqref{eq:3.14} and \eqref{ed}-(e), we have
	\[
	\|\theta_{n,xx}\|\rightarrow0.
	\]
	Then, the Poincar\'e's inequality implies
	\[\|\theta_n\|,\quad\|\theta_{n,x}\|\to0.\]
	Thus, \eqref{ed}-(b) and \eqref{ed}-(d) turns into
	\begin{align}
		&-\kappa\left[\omega(\phi_{n,x}+\psi_n)+\smallint_0^{\infty}g(s)\eta_{n,x}(s)ds\right]_x\rightarrow0\,\hbox{ in  }L^2,\label{b*}\\
		&-b\psi_{n,xx}+\kappa\left[\omega(\phi_{n,x}+\psi_n)+\smallint_0^{\infty}g(s)\eta_{n,x}(s)ds\right]\rightarrow0 \,\hbox{ in   }L_*^2.\label{d*}
	\end{align}
	Taking the $L^2$-inner product of \eqref{b*} with $\phi_n$, and the $L^2$- inner product of \eqref{d*} with $\psi_n$  and adding the results, we deduce
	\begin{equation*}
		\omega \kappa \|\phi_{n,x} + \psi_n\|^2 + b\|\psi_{n,x}\|^2 +\kappa \smallint_0^\infty g(s) (\eta_{n,x}(s), \phi_{nx} + \psi_n) \, ds\to0.
	\end{equation*}
	Since by H\"older's inequality, \eqref{eq:eta_norm}, and $\|\psi_{n,x}+\psi_n\|\le\|z_n\|_{\mathcal{H}_F}\le 1$,
	\begin{align*}
		\left|\smallint_0^\infty g(s) (\eta_{n,x}(s), \phi_{n,x} + \psi_n)\right|&\le\smallint_0^\infty g(s)\|\eta_{n,x}(s)\|ds\|\psi_{n,x}+\psi_n\|\\
		&\le\left(\smallint_0^\infty g(s)ds\right)^{\frac12}\left(\smallint_0^\infty g(s)\|\eta_{n,x}\|^2ds\right)^{\frac12}\\
		&=\left(\smallint_0^\infty g(s)ds\right)^{\frac12}\|\eta_n\|_{L^2_g}\to0,
	\end{align*}
	we get
	\[\omega \kappa \|\phi_{n,x} + \psi_n\|^2 + b\|\psi_{n,x}\|^2 \to0.\]
	
	Hence, the above analysis implies $\|z_{n}\|_{\mathcal{H}_F}\to 0$, which contradicts the equality $\|z_{n}\|_{\mathcal{H}_F}=1$ for all $n\in\mathbb{N}$.
	
	\textit{Case 2}: $\lambda_{*}\neq 0$. Since $\lambda_{*}\in\mathbb{R}\backslash\{0\}$, $f_{n}\in\mathcal{H}_F$ and $z_{n}\in D(\mathcal{A}_F)$ is a solution of \eqref{eq:3.12}, we are in condition to apply Theorem \ref{thmboundresolvent} (for $z_{n}$) to conclude that
	\begin{align*}
		\|z_n\|_{\mathcal{H}_F}&\leq C\left(\frac{1}{I_g(\lambda_*)}+1\right)\left(\frac{1}{I_g(\lambda_*)}+\frac{1}{|\lambda_*|}+1\right)\|f_n\|_{\mathcal{H}_F}\\
		&+C|\chi_0|(1+|\lambda_*|^2)\left(\frac{1}{I_g(\lambda_*)}+1\right)\left(\frac{1}{I_g(\lambda_*)}+\frac{1}{|\lambda_*|}+1\right)\|f_n\|_{\mathcal{H}_F}\\
		&\rightarrow0,
	\end{align*}
	which contradicts again the fact $\|z_{n}\|_{\mathcal{H}_F}=1$ for all $n\in\mathbb{N}$.
\end{proof}
\noindent\textbf{Secondly, we show $i\mathbb{R}\subset\rho(\mathcal{A}_C)$.}
Suppose by contradiction that $i\lambda_{*}\in\sigma(\mathcal{A}_C)$ for some $\lambda_{*}\in\mathbb{R}$. Then, by Lemma \ref{Thm:SpectralApprox}, we obtain $\lambda_{*}\in\sigma_{\sigma p}(\mathcal{A}_C)$, and then there exists $z_{n}=(\phi_{n},\Phi_{n},\psi_{n},\Psi_{n},\theta_{n},q_{n},\eta_{n})\in D(\mathcal{A}_C)$ such that $\|z_{n}\|_{\mathcal{H}_C}=1$ for all $n\in\mathbb{N}$ and

\begin{equation}\label{8_1}
	f_{n}:=i\lambda_{*}z_{n}-\mathcal{A}_Cz_{n}\to 0\text{ in }\mathcal{H}_C.
\end{equation}

Setting $f_{n}=(f^{1}_{n},f^{2}_{n},f^{3}_{n},f^{4}_{n},f^{5}_{n},f^{6}_{n},f^{7}_{n})$, we can write \eqref{8_1} in terms of its components:
\begin{equation}\label{de}
	\begin{cases}
		\text{(a):~ }f_n^1=i\lambda_{*}\phi_n-\Phi_n\rightarrow0 &\hbox{ in  }H_0^1,\\
		\text{(b):~ }f_n^2=i\lambda_{*}\rho_1\Phi_n-\kappa[\omega(\phi_{n,x}+\psi_n)+\smallint_0^{\infty}g(s)\eta_{n,x}(s)ds]_x+\sigma\theta_{n,x}\rightarrow0 &\hbox{ in  }L^2,\\
		\text{(c):~ }f_n^3=i\lambda_{*}\psi_n-\Psi_n\rightarrow0 &\hbox{ in  }H_*^1, \\
		\text{(d):~ }f_n^4=i\lambda_{*}\rho_2\Psi_n-b\psi_{n,xx}+\kappa[\omega(\phi_{n,x}+\psi_n)+\smallint_0^{\infty}g(s)\eta_{n,x}(s)ds]-\sigma\theta_n\rightarrow0&\hbox{ in   }L_*^2, \\
		\text{(e):~ }f_n^5=i\lambda_{*}\theta_n+q_{n,x}+\sigma(\Phi_{n,x}+\Psi_n)\rightarrow0&\hbox{ in  }L^2_*, \\
		\text{(f):~ }f_n^6=i\lambda_{*}\tau q_n+\beta q_n+\theta_{n,x}\rightarrow0 &\hbox{ in  }L^2,\\
		\text{(g):~ }f_n^7=i\lambda_{*}\eta_n-\eta_{n,s}-(\Phi_n+\tilde{\Psi}_n)\rightarrow0&\hbox{ in }L_g^2.
	\end{cases}
\end{equation}
We will split the proof into two cases as follows.

\textit{Case 1}: $\lambda_{*}=0$. In this case, we immediately obtain from \eqref{de}-(a), \eqref{de}-(c), and similar argument as in \eqref{eq:eta_norm} the following convergences:
\begin{equation}\label{8_2}
	\Phi_{n}\to 0\text{ in }H^{1}_{0},\quad \Psi_{n}\to 0\text{ in }H^{1}_{*},\quad \eta_{n}\to 0\text{ in }L_g^2.
\end{equation}
Next, using \eqref{8_2} and \eqref{de}-(e), we yield $\|q_{n,x}\|\rightarrow0$. Then by Poincar\'e's inequality we get
\begin{equation}\label{8_5}
	\|q_{n}\|\rightarrow0.
\end{equation}
Taking \eqref{8_5} into \eqref{de}-(f), and by Poincar\'e's in equality, we get
\(
\|\theta_n\|,~	\|\theta_{n,x}\|\rightarrow0.
\) Then by similar argument as Case 1 in the proof of $i\lambda_{*}\in\sigma(\mathcal{A}_F)$ we get $\|z_{n}\|_{\mathcal{H}_C}\to 0$, which contradicts the equality $\|z_{n}\|_{\mathcal{H}_C}=1$ for all $n\in\mathbb{N}$.

\textit{Case 2}: $\lambda_{*}\neq 0$. Since $\lambda_{*}\in\mathbb{R}\backslash\{0\}$, $f_{n}\in\mathcal{H}_C$ and $z_{n}\in D(\mathcal{A}_C)$ is a solution of \eqref{8_1}. we are in condition to apply Theorem \ref{thmboundresolvent} (for $z_{n}$) to conclude that
\begin{align*}
	\|z_n\|_{\mathcal{H}_C}&\leq C\left(\frac{1}{I_g(\lambda_*)}+1\right)\left(\frac{1}{I_g(\lambda_*)}+\frac{1}{|\lambda_*|}+1\right)\|f_n\|_{\mathcal{H}_C}\\
	&+C|\chi_0|(1+|\lambda_*|^2)\left(\frac{1}{I_g(\lambda_*)}+1\right)\left(\frac{1}{I_g(\lambda_*)}+\frac{1}{|\lambda_*|}+1\right)\|f_n\|_{\mathcal{H}_C}+C|\lambda_*|^2\|f_n\|_{\mathcal{H}_C}\\
	&\rightarrow0,
\end{align*}
which contradicts again the fact $\|z_{n}\|_{\mathcal{H}_C}=1$ for all $n\in\mathbb{N}$.
\subsection{Sharp Resolvent Estimates}\label{sec2.2.3}
The main result of this part is the following  two theorems:
\begin{theorem}\label{x1}
	Let $g$ satisfy the Assumption \ref{g1} and the ${\delta}$-condition \eqref{delta c}. If
	\begin{equation}\label{chioneq}
		\chi_0\neq0,
	\end{equation}
	then there exist two sequences
	\[
	\lambda_n=\frac{\sqrt b\pi^2n^2}{L\sqrt{\rho_2\pi^2n^2+\rho_1L^2}},\quad B_n=\frac{\left(\lambda_n^2\rho_1-\kappa(1-\hat{g}(\lambda_n))\gamma_n\right)(i\lambda_n\rho_3+\beta\gamma_n)-i\lambda_n\sigma^2\gamma_n}{\det\mathbf{M}_n(\lambda_n)},~~~n=1,2,\cdots
	\]
	with $\hat g(\cdot)$ given in Lemma \ref{hatg},
	$$\gamma_n=\left(\frac{n\pi}{L}\right)^2,\quad\det\mathbf{M}_n(\lambda_n)=-\frac{b\beta\rho_1^2\gamma_n^2\lambda_n^2}{\rho_2\gamma_n+\rho_1}-i\frac{\rho_1^2\rho_3\gamma_n\lambda_n^3}{\rho_2\gamma_n+\rho_1}, $$
	such that
	\[\lim_{n\to\infty}\frac{\lambda_n}{n}=\frac{\sqrt b\pi}{L\sqrt{\rho_2}}, \quad \lim_{n\to\infty}|B_n|=\frac{\rho_2^2|\chi_0|}{\rho_1b^2}.\]
	Moreover, for
	$$f_n:=\left(0,0,0,-\rho_2^{-1}\sqrt{\frac2 L}\cos\left(\frac{n\pi x}{L}\right),0,0\right)\in\mathcal{H}_F,$$
	there holds
	$$
	\left\|(i\lambda_n-\mathcal{A}_F)^{-1}f_n\right\|_{\mathcal{H}_F}\geq\sqrt{\rho_2}|B_n||\lambda_n|,
	$$
	where $\chi_0$ is the constant defined in \eqref{chi0}.
\end{theorem}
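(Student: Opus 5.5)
We construct the solution explicitly by a separated-variables ansatz adapted to the boundary conditions \eqref{bdF}. We solve $(i\lambda_n-\mathcal{A}_F)z=f_n$ with
\[
\phi=A_n\sin\left(\tfrac{n\pi x}{L}\right),\quad \psi=B_n\cos\left(\tfrac{n\pi x}{L}\right),\quad \theta=D_n\cos\left(\tfrac{n\pi x}{L}\right),
\]
together with $\Phi=i\lambda_n\phi$, $\Psi=i\lambda_n\psi$, and (up to the common normalizing factor $\sqrt{2/L}$, which we carry along) $\eta(s)=(1-e^{-i\lambda_n s})(\phi+\tilde\psi)$. The last formula comes from solving the $\eta$-equation \eqref{eta} with $f^1=f^3=f^6=0$. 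These functions satisfy $\phi\in H_0^1$, $\psi_x(0)=\psi_x(L)=0$ and $\theta_x(0)=\theta_x(L)=0$, and $\psi$, $\theta$ have zero mean. Since $\tilde\psi=B_n\frac{L}{n\pi}\sin(\frac{n\pi x}{L})$, we get $\eta\in D(\mathbb L)$ with $\eta(\cdot,0)=0$.

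\textbf{Reduction to a $3\times3$ system.} Substituting the ansatz, the memory term becomes $\int_0^\infty g(s)\eta_x(s)\,ds=(\ell-\hat g(\lambda_n))(\phi_x+\psi)$, so that $\omega(\phi_x+\psi)+\int_0^\infty g\eta_x=(1-\hat g(\lambda_n))(\phi_x+\psi)$. Each equation then reduces to a scalar relation in $(A_n,B_n,D_n)$, namely $\mathbf M_n(\lambda)(A_n,B_n,D_n)^T=(0,-\rho_2^{-1}\cdot\rho_2,0)^T$ up to the normalization. The matrix entries are:
\begin{itemize}
\item first row: $-\lambda^2\rho_1+\kappa(1-\hat g)\gamma_n$, $\ \kappa(1-\hat g)\frac{n\pi}{L}$, $\ -\sigma\frac{n\pi}{L}$;
\item second row: $\kappa(1-\hat g)\frac{n\pi}{L}$, $\ -\lambda^2\rho_2+b\gamma_n+\kappa(1-\hat g)$, $\ -\sigma$;
\item third row: the thermal equation, $i\lambda\sigma\frac{n\pi}{L}$, $\ i\lambda\sigma$, $\ i\lambda\rho_3+\beta\gamma_n$.
\end{itemize}
By Cramer's rule, $B_n$ is the stated cofactor quotient (up to sign conventions). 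The numerator is exactly $(\lambda_n^2\rho_1-\kappa(1-\hat g)\gamma_n)(i\lambda_n\rho_3+\beta\gamma_n)-i\lambda_n\sigma^2\gamma_n$.

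\textbf{Choice of $\lambda_n$ and evaluation of the determinant.} We choose $\lambda_n$ to cancel the dominant elastic part. Solving $\lambda^2(\rho_2\gamma_n+\rho_1)=b\gamma_n^2$ gives exactly $\lambda_n=\frac{\sqrt b\,\pi^2n^2}{L\sqrt{\rho_2\pi^2n^2+\rho_1L^2}}$, hence $\lambda_n/n\to\sqrt b\pi/(L\sqrt{\rho_2})$. To evaluate $\det\mathbf M_n(\lambda_n)$, we expand the determinant. The $\kappa(1-\hat g)$ and $\sigma$ contributions cancel in the combination coming from rows 1–2, because the shear strain $\phi_x+\psi$ enters both rows identically; this is the usual Timoshenko factorization. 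The remainder collapses to the stated closed form $-\frac{b\beta\rho_1^2\gamma_n^2\lambda_n^2}{\rho_2\gamma_n+\rho_1}-i\frac{\rho_1^2\rho_3\gamma_n\lambda_n^3}{\rho_2\gamma_n+\rho_1}$. This algebra is the main obstacle, and it must be checked carefully. I would first perform the column operation $C_1\to C_1-\frac{L}{n\pi}C_2$ to isolate the shear combination, which should leave a matrix whose first column involves only $\rho_1\lambda^2$ and $\rho_2\lambda^2-b\gamma_n$ terms.

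\textbf{Asymptotics of $B_n$.} For the limit we use Lemma \ref{hatg}: $\hat g(\lambda_n)\to0$. Since $\lambda_n^2=\frac{b}{\rho_2}\gamma_n-\frac{b\rho_1}{\rho_2^2}+O(\gamma_n^{-1})$, the numerator's leading factor is
\[
\lambda_n^2\rho_1-\kappa(1-\hat g)\gamma_n=-\rho_1\chi_0\gamma_n+o(\gamma_n).
\]
This is where $\chi_0\neq0$ enters. The numerator is then $\sim-\rho_1\chi_0\gamma_n\beta\gamma_n$ at order $\gamma_n^2$, the $i\lambda_n$ terms being of order $\gamma_n^{3/2}$. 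The determinant is $\sim-\frac{b\beta\rho_1^2}{\rho_2}\gamma_n\lambda_n^2\sim-\frac{b^2\beta\rho_1^2}{\rho_2^2}\gamma_n^2$. Hence $|B_n|\to\frac{\rho_2^2|\chi_0|}{\rho_1 b^2}$, matching the claim.

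\textbf{Lower bound.} The solution belongs to $D(\mathcal A_F)$, and by Theorem \ref{thmap} it is the unique resolvent image. Its norm satisfies $\|z\|^2_{\mathcal H_F}\ge\rho_2\|\Psi\|^2=\rho_2\lambda_n^2|B_n|^2\|\sqrt{2/L}\cos(n\pi x/L)\|^2=\rho_2\lambda_n^2|B_n|^2$, which gives the stated inequality.
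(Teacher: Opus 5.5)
Your proposal follows the paper's proof step for step. It uses the same separated ansatz with $\eta_n(s)=(1-e^{-i\lambda_n s})(\phi_n+\tilde\psi_n)$ and the same reduction to a $3\times 3$ system; your rows 1--2 are the negatives of the paper's, and since your right-hand side is $(0,-1,0)^T$ this changes neither the determinant nor the solution. It also has the same choice of $\lambda_n$ as the root of $D(\lambda)=\lambda^2\rho_1+\gamma_n(\lambda^2\rho_2-b\gamma_n)$, the same use of Cramer's rule, the same use of $\hat g(\lambda_n)\to0$, and the same lower bound through $\rho_2\|\Psi_n\|^2$. Your Cramer numerator, your asymptotics for $|B_n|$ and your uniqueness argument via $i\lambda_n\in\rho(\mathcal A_F)$ are correct.

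The one step you leave unchecked, the evaluation of $\det\mathbf M_n(\lambda_n)$, needs correcting in three places.

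\textbf{The column operation.} The operation that removes $\kappa(1-\hat g)$ from the first column is $C_1\to C_1-\frac{n\pi}{L}C_2$, not $C_1-\frac{L}{n\pi}C_2$. With your factor, row 3 of the new first column is $i\lambda\sigma(\frac{n\pi}{L}-\frac{L}{n\pi})\neq0$, so nothing is isolated. With the correct factor, writing $K:=\kappa(1-\hat g(\lambda))$ in the paper's sign convention, one gets for every $\lambda$
\[
\det\mathbf M_n(\lambda)=\lambda^2\rho_1(\lambda^2\rho_2-b\gamma_n)(i\lambda\rho_3+\beta\gamma_n)-\bigl[K(i\lambda\rho_3+\beta\gamma_n)+i\lambda\sigma^2\bigr]D(\lambda).
\]

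\textbf{The cancellation mechanism.} The $\kappa(1-\hat g)$ and $\sigma$ contributions do not cancel because the shear strain enters rows 1--2 identically. They are multiplied by $D(\lambda)$ and vanish only because $D(\lambda_n)=0$.

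\textbf{The closed form.} Using $\lambda_n^2\rho_2-b\gamma_n=-\rho_1\lambda_n^2/\gamma_n$, the formula above gives
\[
\det\mathbf M_n(\lambda_n)=-\frac{\rho_1^2\lambda_n^4}{\gamma_n}\left(\beta\gamma_n+i\rho_3\lambda_n\right)=-\frac{b\beta\rho_1^2\gamma_n^2\lambda_n^2}{\rho_2\gamma_n+\rho_1}-i\frac{b\rho_1^2\rho_3\gamma_n\lambda_n^3}{\rho_2\gamma_n+\rho_1}.
\]
The imaginary part carries a factor $b$ that is missing from the displayed formula in the statement. This is a typo that the paper shares, so your claim that the determinant ``collapses to the stated closed form'' is not literally true.

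None of this affects the rest of your argument. The real part is nonzero, so the system has a unique solution; you need that to produce an explicit element of $D(\mathcal A_F)$ that you can identify with $(i\lambda_n-\mathcal A_F)^{-1}f_n$. The imaginary part is $O(\gamma_n^{3/2})$ against a real part of order $\gamma_n^2$, so $|B_n|\to\rho_2^2|\chi_0|/(\rho_1 b^2)$ stands.
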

\begin{theorem}\label{x2}
	
	Let $g$ satisfy the Assumption \ref{g1} and the ${\delta}$-condition \eqref{delta c}. If
	\begin{equation}\label{tjnex}
		\chi_1\neq0,
	\end{equation}
	then there exist two sequences
	\[
	\lambda_n=\frac{\sqrt b\pi^2n^2}{L\sqrt{\rho_2\pi^2n^2+\rho_1L^2}},\quad B_n=\frac{\left(\lambda_n^2\rho_1-\kappa(1-\hat{g}(\lambda_n))\gamma_n\right)(i\lambda_n\rho_3+\frac{\gamma_n}{i \lambda_n \tau + \beta})-i\lambda_n\sigma^2\gamma_n}{\det\mathbf{U}_n(\lambda_n)},~~~n=1,2,\cdots
	\]
	with $\hat g(\cdot)$ given in Lemma \ref{hatg},
	$$\gamma_n=\left(\frac{n\pi}{L}\right)^2,\quad\det\mathbf{U}_n(\lambda_n)=\lambda_n^2\rho_1(\lambda_n^2\rho_2-b\gamma_n)\left(\frac{\beta\gamma_n}{\beta^2+\lambda^2\tau^2}\right)+i\lambda_n^2\rho_1(\lambda_n^2\rho_2-b\gamma_n)\left(\lambda\rho_3-\frac{\lambda\gamma_n\tau}{\beta^2+\lambda^2\tau^2}\right), $$
	such that
	\[\lim_{n\to\infty}\frac{\lambda_n}{n}=\frac{\sqrt b\pi}{L\sqrt{\rho_2}}, \quad \lim_{n\to\infty}|B_n|=\left\{
	\begin{array}{ll}
		\displaystyle \frac{\rho_2^3|\chi_1|}{\rho_1b^2|\tau b\rho_3-\rho_2|} & \hbox{ if } \tau b\rho_3\neq\rho_2,\\
		\displaystyle \infty, & \hbox{ if }\tau b\rho_3=\rho_2.
	\end{array}
	\right..\]
	Moreover, for
	$$f_n:=\left(0,0,0,-\rho_2^{-1}\sqrt{\frac2 L}\cos\left(\frac{n\pi x}{L}\right),0,0,0\right)\in\mathcal{H}_C,$$
	there holds
	\begin{equation*}
		\|(i\lambda_n-\mathcal{A}_C)^{-1}f_n\|_{\mathcal{H}_C}\geq\sqrt{\rho_2}|B_n||\lambda_n|.
	\end{equation*}
	where $\chi_1$ is the constant defined in \eqref{chi1}.
\end{theorem}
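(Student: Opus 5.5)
We construct the solution of the resolvent equation explicitly by separation of variables. We seek $z=(\phi,\Phi,\psi,\Psi,\theta,q,\eta)=(i\lambda_n-\mathcal{A}_C)^{-1}f_n$ in the ansatz
\[
\phi=A_n\sqrt{\tfrac2L}\sin\!\big(\tfrac{n\pi x}{L}\big),\quad \psi=B_n\sqrt{\tfrac2L}\cos\!\big(\tfrac{n\pi x}{L}\big),\quad \theta=C_n\sqrt{\tfrac2L}\cos\!\big(\tfrac{n\pi x}{L}\big),\quad q=D_n\sqrt{\tfrac2L}\sin\!\big(\tfrac{n\pi x}{L}\big),
\]
with $\Phi=i\lambda_n\phi$, $\Psi=i\lambda_n\psi$. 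These trial functions respect the boundary conditions in \eqref{Cattaneo2} and the zero-mean constraints of $H_*^1$ and $L_*^2$. Since $f^1=f^3=f^7=0$, solving the history equation \eqref{2C}-(g) as in \eqref{eta} gives
\[
\eta(s)=(1-e^{-i\lambda_n s})(\phi+\tilde\psi),
\]
so that
\[
\smallint_0^\infty g(s)\eta_x(s)\,ds=\big(\ell-\hat g(\lambda_n)\big)(\phi_x+\psi).
\]
Hence the effective shear modulus is $\omega+\ell-\hat g(\lambda_n)=1-\hat g(\lambda_n)$. Equation \eqref{2C}-(f) gives $q=-\theta_x/(i\lambda_n\tau+\beta)$, and substituting this into \eqref{2C}-(e) turns the heat part into the term $i\lambda_n\rho_3+\gamma_n/(i\lambda_n\tau+\beta)$ multiplying $C_n$.

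Plugging the ansatz into \eqref{2C}-(b),(d),(e) then yields a $3\times3$ linear system $\mathbf{U}_n(\lambda_n)(A_n,B_n,C_n)^T=(0,1,0)^T$ for the amplitudes. Here the right-hand side comes from $\rho_2f^4=-\sqrt{2/L}\cos(n\pi x/L)$, with the sign convention absorbed into $\mathbf{U}_n$. Cramer's rule gives $B_n$ as the stated cofactor, namely the $(1,3)$ minor, divided by $\det\mathbf{U}_n(\lambda_n)$. Since $\|\Psi\|=|\lambda_n||B_n|$, the norm bound follows immediately:
\[
\|z\|_{\mathcal{H}_C}\ge\sqrt{\rho_2}\,\|\Psi\|=\sqrt{\rho_2}\,|B_n||\lambda_n|.
\]
Two facts must also be checked: $z\in D(\mathcal{A}_C)$, which is clear for trigonometric modes, and solvability, which holds because $\det\mathbf{U}_n\neq0$ for large $n$, as the asymptotics below show. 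By Theorem \ref{thmap}, $i\lambda_n\in\rho(\mathcal{A}_C)$, so this $z$ is the resolvent.

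The choice of $\lambda_n$ is what makes $\det\mathbf{U}_n$ simplify. It is designed so that the $\psi$-diagonal entry interacts in a specific way with the $\phi$-row: $\lambda_n^2(\rho_2\gamma_n+\rho_1)=b\gamma_n^2$. This leads to the factored form of $\det\mathbf{U}_n$ stated in the theorem, after using $\hat g(\lambda_n)\to0$ from Lemma \ref{hatg}. I would compute the determinant symbolically, expanding along the row of the perturbed equation. I would then verify $\lambda_n/n\to\sqrt b\pi/(L\sqrt{\rho_2})$ directly from the formula for $\lambda_n$.

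The main obstacle is the asymptotic evaluation of $|B_n|$. Both the numerator and $\det\mathbf{U}_n$ grow like powers of $n$, and leading terms cancel. I would write $\lambda_n^2=\frac{b}{\rho_2}\gamma_n-\frac{b\rho_1}{\rho_2^2}+O(\gamma_n^{-1})$, use Lemma \ref{hatg} to get $\hat g(\lambda_n)=O(1/\lambda_n)$, and expand $\gamma_n/(i\lambda_n\tau+\beta)=\gamma_n/(i\lambda_n\tau)+O(\gamma_n/\lambda_n^2)$. In the numerator, the leading coefficient of $\gamma_n$ collects three contributions: $\rho_1\lambda_n^2-\kappa\gamma_n\sim-\rho_1\chi_0\gamma_n$; the heat factor $i\lambda_n\rho_3+\gamma_n/(i\lambda_n\tau)\sim i\lambda_n(\rho_3-\rho_2/(\tau b))$; and the coupling $-i\lambda_n\sigma^2\gamma_n$. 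Combining these gives a multiple of $\chi_1$, since by \eqref{chi1}
\[
\chi_1=\Big(1-\tfrac{\tau b\rho_3}{\rho_2}\Big)\chi_0-\tfrac{\tau b\sigma^2}{\rho_1\rho_2},
\]
divided by the factor $\tau b\rho_3-\rho_2$. Dividing by the leading term of $\det\mathbf{U}_n$ then produces the limit $\rho_2^3|\chi_1|/(\rho_1b^2|\tau b\rho_3-\rho_2|)$. In the degenerate case $\tau b\rho_3=\rho_2$ the heat factor's leading order vanishes, so the determinant is of lower order while the numerator is not. In that case $|B_n|\to\infty$, which I would check by carrying the expansion one order further.
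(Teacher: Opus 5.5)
Your proposal is correct and follows essentially the same route as the paper. That route is the $\sin/\cos$ modal ansatz, elimination of $q$ to the $3\times 3$ system for $(A_n,B_n,C_n)$, the choice $\lambda_n^2(\rho_2\gamma_n+\rho_1)=b\gamma_n^2$, Cramer's rule, the asymptotic expansion producing $\chi_1$, and the bound $\|z\|_{\mathcal{H}_C}\ge\sqrt{\rho_2}\,\|\Psi\|$.

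Two small corrections. First, the numerator of $B_n$ is the $(2,2)$ cofactor (delete the second row and second column), not the $(1,3)$ minor. Second, the factorization $\det\mathbf{U}_n(\lambda_n)=\lambda_n^2\rho_1(\lambda_n^2\rho_2-b\gamma_n)\bigl(i\lambda_n\rho_3+\gamma_n/(i\lambda_n\tau+\beta)\bigr)$ is exact, not asymptotic, because every $\hat g$-term in the determinant carries the factor $\lambda_n^2(\rho_2\gamma_n+\rho_1)-b\gamma_n^2=0$. Since $\operatorname{Re}\bigl(i\lambda_n\rho_3+\gamma_n/(i\lambda_n\tau+\beta)\bigr)=\beta\gamma_n/(\beta^2+\lambda_n^2\tau^2)>0$, this determinant is nonzero for every $n$, not just large $n$, and $\hat g(\lambda_n)\to0$ is needed only for the limit of $|B_n|$.
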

\begin{proof}[Proof of Theorem \ref{x1}]
	First, let us introduce the following notation
	$$\quad e_n(x)=\sqrt{\frac2 L}\sin(\sqrt{\gamma_n}x),\quad e_n^*(x)=\sqrt{\frac2 L}\cos(\sqrt{\gamma_n}x).$$
	Then $f_n=(0,0,0,-\rho_2^{-1}e_n^*,0)\in\mathcal{H}_F$. Since $i\mathbb{R}\subset\rho(\mathcal{A}_F)$, there exists $z_n=(\phi_n,\Phi_n,\psi_n,\Psi_n,\theta_n,\eta_n)\in D(\mathcal{A}_F)$ such that $ i\lambda_nz_n-\mathcal{A}_Fz_n=f_n.$ Componentwise, we have
	\begin{equation}
		\begin{cases}\label{d0}
			\text{(a):~}i\lambda_n\phi_n-\Phi_n=0,\\ \text{(b):~}i\lambda_n\rho_1\Phi_n-\kappa[\omega(\phi_{n,x}+\psi_n)+\smallint_0^{\infty}g(s)\eta_{n,x}(s)ds]_x+\sigma\theta_{n,x}=0,\\
			\text{(c):~}i\lambda_n\psi_n-\Psi_n=0,\\				\text{(d):~}i\lambda_n\rho_2\Psi_n-b\psi_{n,xx}+\kappa[\omega(\phi_{n,x}+\psi_n)+\smallint_0^{\infty}g(s)\eta_{n,x}(s)ds]-\sigma\theta_n=-e_n^*,\\
			\text{(e):~}i\lambda_n\rho_3\theta_n-\beta\theta_{n,xx}+\sigma(\Phi_{n,x}+\Psi_n)=0,\\
			\text{(f):~}i\lambda_n\eta_n+\eta_{n,s}-(\Phi_n+\tilde{\Psi}_n)=0.
		\end{cases}
	\end{equation}
	Solving the differential equation \eqref{d0}-(f) and using \eqref{d0}-(a,c) in the result, we have
	\begin{equation}\label{d1}
		\eta_n(s)=(1-e^{-i\lambda_n s})(\phi_n+\tilde{\psi}_n).
	\end{equation}
	Now, using \eqref{d0}-(a,c) and \eqref{d1} in \eqref{d0}-(b,d,e), we arrive at
	\begin{equation}
		\left\{\begin{split}\label{d2} &\lambda_n^2\rho_1\phi_n+\kappa(1-\hat{g}(\lambda_n))(\phi_{n,x}+\psi_n)_x-\sigma\theta_{n,x}=0,\\
			&\lambda_n^2\rho_2\psi_n+b\psi_{n,xx}-\kappa(1-\hat{g}(\lambda_n))(\phi_{n,x}+\psi_n)+\sigma\theta_n=e_n^*,\\
			&i\lambda_n\rho_3\theta_n-\beta\theta_{n,xx}+i\lambda_n\sigma(\phi_{n,x}+\psi_n)=0,
		\end{split}
		\right.
	\end{equation}
	where $\hat g(\cdot)$ is defined in Lemma \ref{hatg}, and we have used $\omega+\smallint_0^\infty g(s)ds=\omega+\ell=1$.

	We are looking for solutions of \eqref{d2} of the form,
	$$\phi_n=A_ne_n,~~~\psi_n=B_ne^*_n,~~~\theta_n=C_ne^*_n,$$
	for some complex sequences $A_n,~B_n,~C_n$, $n=1,2,\cdots$. Replacing these particular choices in \eqref{d2}, we obtain the following complex linear system,
	\begin{equation}
		\left\{\begin{split}\label{d3}
			&\lambda_n^2\rho_1A_n-\kappa(1-\hat{g}(\lambda_n))(\gamma_nA_n+\sqrt{\gamma_n}B_n)+\sigma\sqrt{\gamma_n}C_n=0,\\
			&\lambda_n^2\rho_2B_n-b\gamma_nB_n-\kappa(1-\hat{g}(\lambda_n))(\sqrt{\gamma_n}A_n+B_n)+\sigma C_n=1,\\
			&i\lambda_n\rho_3C_n+\beta\gamma_nC_n+i\lambda\sigma(\sqrt{\gamma_n}A_n+B_n)=0,
		\end{split}
		\right.
	\end{equation}
	which can be written as
	$$\mathbf{M}_n(\lambda_n)\left(
	\begin{array}{c}
		A_n \\
		B_n \\
		C_n \\
	\end{array}
	\right)=\left(
	\begin{array}{c}
		0 \\
		1 \\
		0 \\
	\end{array}
	\right),
	$$
	where
	\begin{equation*}
		\mathbf{M}_n(\lambda):=\begin{pmatrix}
			\lambda^2\rho_1-\kappa(1-\hat{g}(\lambda))\gamma_n& -\kappa(1-\hat{g}(\lambda))\sqrt{\gamma_n} & \sigma\sqrt{\gamma_n}\\
			-\kappa(1-\hat{g}(\lambda))\sqrt{\gamma_n} & \lambda^2\rho_2-b\gamma_n-\kappa(1-\hat{g}(\lambda)) & \sigma\\
			i\lambda\sigma\sqrt{\gamma_n} & i\lambda\sigma & i\lambda\rho_3+\beta\gamma_n
		\end{pmatrix}.
	\end{equation*}
	A direct calculation shows that
	$$\det \mathbf{M}_n(\lambda)=Q_1(\lambda)+Q_2(\lambda),$$
	where
	\begin{align*}
		Q_1(\lambda):=&\left[\lambda^2\rho_1(\lambda^2\rho_2-b\gamma_n)-\kappa D(\lambda)\right]\beta\gamma_n,\\ Q_2(\lambda):=&i\rho_1\rho_3\lambda^3(\rho_2\lambda^2-b\gamma_n)-i\kappa\rho_3\lambda D(\lambda)+\kappa\hat{g}(\lambda)D(\lambda)(i\lambda\rho_3+\beta\gamma_n)-i\sigma^2\lambda D(\lambda),
	\end{align*}
	and
	\[D(\lambda):=\lambda^2\rho_2\gamma_n-b\gamma_n^2+\lambda^2\rho_1.\]
	
	In the following we choose a suitable sequence $\lambda_n$ such that $\det \mathbf{M}_n(\lambda_n)\neq0$.	Actually, we will pick a sequence satisfying $D(\lambda_n)=0$, i.e.,
	\begin{equation}\label{d5}
		\lambda_n^2=\frac{b\gamma_n^2}{\rho_2\gamma_n+\rho_1}.
	\end{equation}
	Then,
	\begin{align*}
		Q_1(\lambda_n)&=\left[\lambda_n^2\rho_1(\lambda_n^2\rho_2-b\gamma_n)-\kappa D(\lambda_n)\right]\beta\gamma_n=-\frac{b\beta\rho_1^2\gamma_n^2\lambda_n^2}{\rho_2\gamma_n+\rho_1},\\ Q_2(\lambda_n)&=i\rho_1\rho_3\lambda_n^3(\rho_2\lambda_n^2-b\gamma_n)-i\kappa\rho_3\lambda_nD(\lambda_n)+\kappa\hat{g}(\lambda_n)D(\lambda_n)(i\lambda_n\rho_3+\beta\gamma_n)-i\sigma^2\lambda_n D(\lambda_n)=-i\frac{\rho_1^2\rho_3\gamma_n\lambda_n^3}{\rho_2\gamma_n+\rho_1}.
	\end{align*}
	Therefore,
	\begin{align}\label{detMz} |\det\mathbf{M}_n(\lambda_n)|=\sqrt{\left(\frac{b\beta\rho_1^2\gamma_n^2\lambda_n^2}{\rho_2\gamma_n+\rho_1}\right)^2+\left(\frac{\rho_1^2\rho_3\gamma_n\lambda_n^3}{\rho_2\gamma_n+\rho_1}\right)^2}=\frac{\rho_1^2\gamma_n\lambda_n^2}{\rho_2\gamma_n+\rho_1}\sqrt{b^2\beta^2\gamma_n^2+\rho_3^2\lambda_n^2}>0.
	\end{align}
	Returning \eqref{d5} to system \eqref{d3} and solving it, we get
	\[B_n=\frac{\left(\lambda_n^2\rho_1-\kappa(1-\hat{g}(\lambda_n))\gamma_n\right)(i\lambda_n\rho_3+\beta\gamma_n)-i\lambda_n\sigma^2\gamma_n}{\det\mathbf{M}_n(\lambda_n)}.\]
	Then, by \eqref{d5}, \eqref{detMz}, Lemma \ref{hatg}, and \eqref{chi0}, we get
	\begin{align*}
		\lim_{n\to\infty}|B_n|=&\lim_{n\to\infty}\frac{|\left(\lambda_n^2\rho_1-\kappa(1-\hat{g}(\lambda_n))\gamma_n\right)(i\lambda_n\rho_3+\beta\gamma_n)-i\lambda_n\sigma^2\gamma_n|/\gamma_n^2}{|\det\mathbf{M}_n(\lambda_n)|/\gamma_n^2}\\
		=&\lim_{n\to\infty}\frac{\left|\left(\rho_1\frac{\lambda_n^2}{\gamma_n}-\kappa(1-\hat{g}(\lambda_n))\right)\left(i\frac{\lambda_n}{\gamma_n}\rho_3+\beta\right)-i\frac{\lambda_n}{\gamma_n}\sigma^2\right|}{\frac{\rho_1^2\gamma_n}{\rho_2\gamma_n+\rho_1}\frac{\lambda_n^2}{\gamma_n}\sqrt{b^2\beta^2+\rho_3^2\frac{\lambda_n^2}{\gamma_n^2}}}=\frac{\rho_2^2|\chi_0|}{\rho_1b^2}.
	\end{align*}
	
	Additionally, from \eqref{d0}-(c), we have
	$$\Psi_n=i\lambda_n\psi_n=i\lambda_nB_n\sqrt{\frac2 L}\cos(\sqrt{\gamma_n}x).$$
	Therefore,
	\[\|z\|_{\mathcal{H}_F}^2\geq\rho_2\|\Psi_n\|^2=\frac{2}{L}\rho_2|B_n|^2|\lambda_n|^2\smallint_0^L\cos^2(\sqrt{\gamma_n}x)dx=\rho_2|B_n|^2|\lambda_n|^2.
	\]
	Then the desired conclusion follows.
\end{proof}

\begin{proof}[Proof of Theorem \ref{x2}]
	Let $e_n$ and $e_n^*$ be the same functions as  in the proof of Theorem \ref{x1},	we consider the sequences $f_n=(0,0,0,-\rho_2^{-1}e_n^*,0,0,0)\in\mathcal{H}_C$. Since $i\mathbb{R}\subset\rho(\mathcal{A}_C)$, there exists $z_n=(\phi_n,\Phi_n,\psi_n,\Psi_n,\theta_n,q_n,\eta_n)\in D(\mathcal{A}_C)$ such that $ i\lambda_nz_n-\mathcal{A}_cz_n=f_n.$
	Componentwise, we have
	\begin{equation}\label{op}
		\begin{cases}
			&\text{(a):~}i \lambda_n \phi_n - \Phi_n = 0, \\
			&\text{(b):~}i \lambda_n \rho_1 \Phi_n - \kappa \left[ \omega(\phi_{n,x} + \psi_n) + \smallint_0^\infty g(s) \eta_{n,x}(s) \, ds \right]_x + \sigma \theta_{n,x} = 0,\\
			&\text{(c):~}i \lambda_n \psi_n - \Psi_n = 0, \\
			&\text{(d):~}i \lambda_n \rho_2 \Psi_n - b \psi_{n,xx} + \kappa \left[ \omega(\Phi_{n,x} + \Psi_n) + \smallint_0^\infty g(s) \eta_{n,x}(s) \, ds \right] - \sigma \theta_n = -e_n^*,\\
			&\text{(e):~}i \lambda_n \rho_3 \theta_n + q_{n,x} + \sigma (\Phi_{n,x} + \Psi_n) = 0,\\
			&\text{(f):~}i \lambda_n \tau q_n + \beta q_n + \theta_{n,x} = 0, \\
			&\text{(g):~}i \lambda_n \eta_n + \eta_{n,s} - (\Phi_n + \tilde{\Psi}_n) = 0.
		\end{cases}
	\end{equation}
	Solving the differential equation \eqref{op}-(g) and using \eqref{op}-(a,c) in the result, we have
	\begin{equation}\label{r1}
		\eta(s)=(1-e^{-i\lambda_n s})(\phi_n+\tilde{\psi}_n).
	\end{equation}
	Now, using \eqref{op}-(a,c) and \eqref{r1} in \eqref{op}-(b,d,e,f), we arrive at
	\begin{equation}
		\left\{\begin{split}\label{r2}
			&\lambda_n^2 \rho_1 \phi_n + \kappa (1 - \hat{g}(\lambda_n)) (\phi_{n,x} + \psi_n)_x - \sigma \theta_{n,x} = 0, \\
			&\lambda_n^2 \rho_2 \psi_n + b \psi_{n,xx} - \kappa (1 - \hat{g}(\lambda_n)) (\phi_{n,x} + \psi_n) + \sigma \theta_n= e_n^*, \\
			&i \lambda_n \rho_3 \theta_n + q_{n,x} + i \lambda_n \sigma (\phi_{n,x} + \psi_n) = 0 ,\\
			&i \lambda_n \tau q_n + \beta q_n + \theta_{n,x} = 0,
		\end{split}
		\right.
	\end{equation}
	where $\hat g(\cdot)$ is defined in Lemma \ref{hatg}, and we have used $\omega+\smallint_0^\infty g(s)ds=\omega+\ell=1$.
	
	We are looking for solutions of \eqref{r2} of the form,
	$$\phi_n=A_ne_n,~~~\psi_n=B_ne^*_n,~~~\theta_n=C_ne^*_n,~~~q=D_ne_n$$
	for some complex sequences $A_n,~B_n,~C_n,~D_n$. Replacing these particular choices in \eqref{r2}, we obtain the following complex linear system,
	\begin{equation}
		\left\{\begin{split}\label{r3}
			&\lambda_n^2 \rho_1 A_n - \kappa(1 - \hat{g}(\lambda_n))(\gamma_n A_n + \sqrt{\gamma_n} B_n) + \sigma \sqrt{\gamma_n} C_n = 0, \\
			&\lambda_n^2 \rho_2 B_n - b \gamma_n B_n - \kappa(1 - \hat{g}(\lambda_n))(\sqrt{\gamma_n} A_n + B_n) + \sigma C_n = 1, \\
			&i \lambda_n \rho_3 C_n + \sqrt{\gamma_n} D_n + i \lambda_n \sigma (\sqrt{\gamma_n} A_n + B_n) = 0, \\
			&i \lambda_n \tau D_n + \beta D_n - \sqrt{\gamma_n} C_n = 0.
		\end{split}
		\right.
	\end{equation}
	Using $\eqref{r3}_4$, we get $D_n=\frac{\sqrt{\gamma_n}C_n}{i \lambda_n \tau + \beta}$. Thus, $\eqref{r3}$ becomes
	\begin{equation*}
		\left\{\begin{split}
			&\lambda_n^2 \rho_1 A_n - \kappa(1 - \hat{g}(\lambda_n))(\gamma_n A_n + \sqrt{\gamma_n} B_n) + \sigma \sqrt{\gamma_n} C_n = 0, \\
			&\lambda_n^2 \rho_2 B_n - b \gamma_n B_n - \kappa(1 - \hat{g}(\lambda_n))(\sqrt{\gamma_n} A_n + B_n) + \sigma C_n = 1, \\
			& \left(i\lambda_n \rho_3+\frac{\gamma_n}{i \lambda_n \tau + \beta}\right) C_n + i \lambda_n \sigma (\sqrt{\gamma_n} A_n + B_n) = 0,
		\end{split}
		\right.
	\end{equation*}
	which can be written as
	$$\mathbf{U}_n(\lambda_n)\left(
	\begin{array}{c}
		A_n \\
		B_n \\
		C_n \\
	\end{array}
	\right)=\left(
	\begin{array}{c}
		0 \\
		1 \\
		0 \\
	\end{array}
	\right),
	$$
	where
	\begin{equation*}
		\mathbf{U}_n(\lambda):=\begin{pmatrix}
			\lambda^2 \rho_1 - \kappa (1 - \hat{g}(\lambda)) \gamma_n & -\kappa(1-\hat{g}(\lambda))\sqrt{\gamma_n} & \sigma\sqrt{\gamma_n}\\
			-\kappa(1-\hat{g}(\lambda))\sqrt{\gamma_n} &  \lambda^2 \rho_2 - b \gamma_n - \kappa(1 - \hat{g}(\lambda)) & \sigma\\
			i\lambda\sigma\sqrt{\gamma_n} & i\lambda\sigma &  i \lambda \rho_3 + \frac{\gamma_n}{i \lambda \tau + \beta}
		\end{pmatrix}.
	\end{equation*}
	A direct calculation shows that
	$$\det\mathbf{U}_n(\lambda)=U_1(\lambda)+U_2(\lambda),$$
	where,
	\begin{align*}
		U_1(\lambda)&=[\lambda^2\rho_1(\lambda^2\rho_2-b\gamma_n)-\kappa D(\lambda)]\operatorname{Re}Q(\lambda),\\ U_2(\lambda)&=\kappa\hat{g}(\lambda)D(\lambda)Q(\lambda)+i[\lambda^2\rho_1(\lambda^2\rho_2-b\gamma_n)-\kappa D(\lambda)]\operatorname{Im}Q(\lambda)-i\lambda\sigma^2 D(\lambda),
	\end{align*}
	and
	$$D(\lambda):=\lambda^2\rho_2\gamma_n-b\gamma_n^2+\lambda^2\rho_1,~~~Q(\lambda):=i\lambda_n \rho_3+\frac{\gamma_n}{i \lambda_n \tau + \beta}=\frac{\beta\gamma_n}{\beta^2+\lambda^2\tau^2}+i\left(\lambda\rho_3-\frac{\lambda\gamma_n\tau}{\beta^2+\lambda^2\tau^2}\right).$$
	
	In the following we choose a suitable sequence $\lambda_n$ such that $\det\mathbf{U}_n\neq0$. Actually, we will pick a sequence $\lambda_n$ satisfying $D(\lambda_n)=0$. Therefore,
	\begin{equation}\label{r5}
		\lambda_n^2=\frac{b\gamma_n^2}{\rho_2\gamma_n+\rho_1}.
	\end{equation}
	Then,
	\begin{align*} U_1(\lambda_n)=\lambda_n^2\rho_1(\lambda_n^2\rho_2-b\gamma_n)\operatorname{Re}Q(\lambda_n),~~~ U_2(\lambda_n)=i\lambda_n^2\rho_1(\lambda_n^2\rho_2-b\gamma_n)\operatorname{Im}Q(\lambda_n),
	\end{align*}
	and
	$$|\det\mathbf{U}_n(\lambda_n)|=|\lambda_n^2\rho_1(\lambda_n^2\rho_2-b\gamma_n)||Q(\lambda_n)|=\frac{\rho_1^2b^2\gamma_n^3}{(\rho_2\gamma_n+\rho_1)^2}|Q(\lambda_n)|\neq0.$$
	Then, by \eqref{r5}, Lemma \ref{hatg}, \eqref{chi0}, and \eqref{tjnex}, we get
	\begin{align*} \lim_{n\to\infty}|B_n|=&\lim_{n\to\infty}\frac{|\left(\lambda_n^2\rho_1-\kappa(1-\hat{g}(\lambda_n))\gamma_n\right)(i\lambda_n\rho_3+\frac{\gamma_n}{i \lambda_n \tau + \beta})-i\lambda_n\sigma^2\gamma_n|/\gamma_n\lambda_n}{|\det\mathbf{U}_n(\lambda_n)|/\gamma_n\lambda_n}\nonumber\\
		=&\lim_{n\to\infty}\frac{\left|\left(\rho_1\frac{\lambda_n^2}{\gamma_n}-\kappa(1-\hat{g}(\lambda_n))\right)\left(i\rho_3+\frac{\gamma_n}{i \lambda_n^2 \tau + \beta\lambda_n}\right)-i\sigma^2\right|}{\frac{\rho_1^2b^2\gamma_n^2}{(\rho_2\gamma_n+\rho_1)^2}|i\rho_3+\frac{\gamma_n}{i \lambda_n^2 \tau + \beta\lambda_n}|}\nonumber\\=&\left\{
		\begin{array}{ll}
			\displaystyle \frac{|\chi_0\rho_1(\tau b\rho_3-\rho_2)+\tau b\sigma^2|\rho_2^2}{\rho_1^2b^2|\tau b\rho_3-\rho_2|} & \hbox{ if } \tau b\rho_3\neq\rho_2,\\
			\displaystyle \infty, & \hbox{ if }\tau b\rho_3=\rho_2,
		\end{array}
		\right.\notag\\
		=&\left\{
		\begin{array}{ll}
			\displaystyle \frac{\rho_2^3|\chi_1|}{\rho_1b^2|\tau b\rho_3-\rho_2|} & \hbox{ if } \tau b\rho_3\neq\rho_2,\\
			\displaystyle \infty, & \hbox{ if }\tau b\rho_3=\rho_2,
		\end{array}
		\right.
	\end{align*}
	where we have used
	$$\chi_0\rho_1(\tau b\rho_3-\rho_2)+\tau b\sigma^2=-\rho_1\rho_2\chi_1.$$	
	
	Additionally, from \eqref{op}-(c), we have
	$$\Psi_n=i\lambda_n\psi_n=i\lambda_nB_n\sqrt{\frac2 L}\cos(\sqrt{\gamma_n}x).$$
	Therefore,
	\begin{equation*}
		\|z\|_{\mathcal{H}_C}^2\geq\rho_2\|\Psi_n\|^2=\frac{2}{L}\rho_2|B_n|^2|\lambda_n|^2\smallint_0^L\cos^2(\sqrt{\gamma_n}x)dx=\rho_2|B_n|^2|\lambda_n|^2.
	\end{equation*}
	Then the desired conclusion follows.
\end{proof}
\subsection{Proofs of the Main Stability Theorems}\label{sec2.2.4}
In order to prove Theorems \ref{thmFourier} and \ref{thmCattaneo}, we need the following theorem, which can be found in \cite{MR2606945,MR743749,MR461206}.
\begin{theorem}\label{fz1}
	Let $\mathcal{A}:D(\mathcal{A})\subset\mathcal{H}\to\mathcal{H}$ be a infinitesimal generator of a bounded $C_0$-semigroup $S(t)=e^{\mathcal{A}t}$, $t\ge0$,  on a Hilbert space $\mathcal{H}$. Assume that $i\mathbb{R}\subset \rho(\mathcal{A})$. Then the following holds:
	\begin{enumerate}
		\item $S(t)$ is polynomially stable of order $\frac{1}{\beta}$ for some constant $\beta>0$ if and only if
		\[\limsup_{\lambda\in\mathbb{R},~|\lambda|\to\infty}|\lambda|^{-\beta}\left\|(i\lambda-\mathcal{A})^{-1}\right\|_{\mathcal{L}(\mathcal{H})}<\infty.\] \item $S(t)$ is exponentially stable if and only if
		\begin{equation}\label{fz2_0}
			\limsup_{\lambda\in\mathbb{R},~|\lambda|\to\infty}\left\|(i\lambda-\mathcal{A})^{-1}\right\|_{\mathcal{L}(\mathcal{H})}<\infty.
		\end{equation}
	\end{enumerate}
\end{theorem}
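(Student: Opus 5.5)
Theorem~\ref{fz1} is a combination of two classical characterizations, and I would prove the two parts separately. Part~2 is the Gearhart--Pr\"uss--Huang theorem and part~1 is the Borichev--Tomilov theorem. Throughout, \emph{polynomially stable of order $1/\beta$} is read as $\|S(t)\mathcal{A}^{-1}\|_{\mathcal{L}(\mathcal{H})}=O(t^{-1/\beta})$. This is exactly the form used in Theorems~\ref{thmFourier} and \ref{thmCattaneo} with the graph norm of $D(\mathcal{A})$. Since $i\mathbb{R}\subset\rho(\mathcal{A})$ and $\rho(\mathcal{A})$ is open, the resolvent is continuous on $i\mathbb{R}$. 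Hence the $\limsup$ conditions are equivalent to global bounds $\|(i\lambda-\mathcal{A})^{-1}\|\le C(1+|\lambda|)^{\beta}$, respectively $\le C$, on all of $\mathbb{R}$.

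For part~2, the necessity is direct. If $\|S(t)\|\le Me^{-\gamma t}$, then the Laplace transform $(\mu-\mathcal{A})^{-1}x=\int_0^\infty e^{-\mu t}S(t)x\,dt$ converges for $\operatorname{Re}\mu>-\gamma$. This gives the uniform bound $M/\gamma$ on $i\mathbb{R}$.

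For sufficiency, let $K:=\sup_{\lambda}\|(i\lambda-\mathcal{A})^{-1}\|<\infty$.
\begin{enumerate}
\item A Neumann series argument extends the resolvent to the strip $|\operatorname{Re}\mu|<1/(2K)$, with bound $2K$ there.
\item The resolvent is also bounded on $\operatorname{Re}\mu\ge\varepsilon$ by $M/\varepsilon$. By the resolvent identity it is then uniformly bounded on the whole closed half-plane $\operatorname{Re}\mu\ge0$.
\item Fix $\varepsilon>0$. Apply Plancherel in the Hilbert space $\mathcal{H}$ to $t\mapsto e^{-\varepsilon t}S(t)x$, whose Fourier transform is $(\varepsilon+i\lambda-\mathcal{A})^{-1}x$. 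Combined with the identity $(\varepsilon+i\lambda-\mathcal{A})^{-1}=(\varepsilon+i\lambda-\mathcal{A})^{-1}_{\text{at }\varepsilon+1}\,[\,I+(\ldots)\,]$, this gives $\int_0^\infty\|S(t)x\|^2dt\le C\|x\|^2$ with $C$ independent of $\varepsilon$.
\item Let $\varepsilon\to0$ to get $S(\cdot)x\in L^2(0,\infty;\mathcal{H})$ for every $x$. Datko--Pazy then yields exponential stability.
\end{enumerate}

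For part~1, necessity goes through the Laplace representation $(i\lambda-\mathcal{A})^{-1}\mathcal{A}^{-1}x=\int_0^\infty e^{-i\lambda t}S(t)\mathcal{A}^{-1}x\,dt$.
\begin{enumerate}
\item Split this integral at $t=T$.
\item Bound the tail by the assumed decay $t^{-1/\beta}$.
\item Bound the head by $T\sup\|S\|$ together with an integration by parts that gains a factor.
\item Choose $T\sim|\lambda|^{\beta}$. Combined with the resolvent identity $(i\lambda-\mathcal{A})^{-1}=\mathcal{A}^{-1}\bigl(i\lambda(i\lambda-\mathcal{A})^{-1}-I\bigr)$, this iteration yields $\|(i\lambda-\mathcal{A})^{-1}\|=O(|\lambda|^{\beta})$.
\end{enumerate}

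Sufficiency in part~1 is the main obstacle, since the rate must come out \emph{without} the logarithmic loss that appears in the Banach-space Batty--Duyckaerts estimate. Here I would follow Borichev--Tomilov.
\begin{enumerate}
\item Reduce to showing $\|S(t)(-\mathcal{A})^{-\beta}\|=O(t^{-1})$. This uses the moment inequality and the $\mathcal{H}$-valued interpolation relating $\mathcal{A}^{-1}$ and fractional powers.
\item Rewrite $t\,S(t)(-\mathcal{A})^{-\beta}x$ through the contour/Plancherel representation. Along the shifted line $\operatorname{Re}\mu=1/t$, the factor $(\mu-\mathcal{A})^{-1}(-\mathcal{A})^{-\beta}$ is uniformly bounded. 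This uses the assumed bound and the resolvent identity in the region $|\operatorname{Re}\mu|\lesssim|\operatorname{Im}\mu|^{-\beta}$.
\item Apply Plancherel twice, once for $S(\cdot)$ and once for $S(\cdot)^*$, both of which are bounded semigroups. Pairing them gives $|\langle tS(t)(-\mathcal{A})^{-\beta}x,y\rangle|\le C\|x\|\|y\|$.
\end{enumerate}
The Hilbert structure is used essentially in this last step: it is the double Plancherel estimate that eliminates the log factor.
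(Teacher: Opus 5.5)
The paper does not prove this statement; it cites Gearhart, Pr\"uss and Borichev--Tomilov. Your outline reconstructs those classical arguments, and Part~2 is fine as you sketch it. The gap is in the one step that makes the Borichev--Tomilov direction optimal.

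Suppose you represent $tS(t)(-\mathcal{A})^{-\beta}x$ on $\operatorname{Re}\mu=1/t$ in the natural way, through the Laplace transform $R(\mu)^2$ of $tS(t)$, where $R(\mu)=(\mu-\mathcal{A})^{-1}$. If you then pull out the bounded factor $K(\mu)=R(\mu)(-\mathcal{A})^{-\beta}$, only \emph{one} resolvent remains: $\langle R(\mu)^2(-\mathcal{A})^{-\beta}x,y\rangle=\langle K(\mu)x,R(\mu)^*y\rangle$. Grant that $s\mapsto K(1/t+is)x$ is square integrable uniformly in $t$. The other factor still costs $\bigl(\int_{\mathbb{R}}\|R(1/t+is)^*y\|^2ds\bigr)^{1/2}\le(\pi M^2t)^{1/2}\|y\|$. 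So you only reach $\|S(t)(-\mathcal{A})^{-\beta}\|=O(t^{-1/2})$, i.e.\ decay of order $1/(2\beta)$. For the paper's $\beta=2$ this is $t^{-1/4}$, not $t^{-1/2}$.

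The missing idea is to go one power higher. Since $t^2S(t)$ has Laplace transform $2R(\mu)^3$,
\[
\langle t^2S(t)(-\mathcal{A})^{-\beta}x,y\rangle=\frac{e}{\pi}\int_{\mathbb{R}}e^{ist}\bigl\langle K(\mu)R(\mu)x,\,R(\mu)^*y\bigr\rangle\,ds,\qquad \mu=\tfrac1t+is,
\]
and now the bounded factor sits between two resolvents. Cauchy--Schwarz and Plancherel for $S$ and $S^*$ give $|\langle t^2S(t)(-\mathcal{A})^{-\beta}x,y\rangle|\le eKM^2\,t\,\|x\|\|y\|$ with $K=\sup_{\operatorname{Re}\mu>0}\|K(\mu)\|$. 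Hence $\|S(t)(-\mathcal{A})^{-\beta}\|=O(t^{-1})$.

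That uniform bound on $K(\mu)$ also needs more than you state. The Neumann series only gives $\|R(\mu)\|\lesssim(1+|\mu|)^{\beta}$ on the closed right half-plane. Turning this into boundedness of $R(\mu)(-\mathcal{A})^{-\beta}$ requires iterating $R(\mu)(-\mathcal{A})^{-1}=\mu^{-1}\bigl((-\mathcal{A})^{-1}-R(\mu)\bigr)$ for integer $\beta$ (enough for the paper), or a Balakrishnan-type integral for fractional $\beta$.

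A smaller point concerns the necessity direction of Part~1. Do not bound the tail $\int_T^\infty e^{-i\lambda t}S(t)\mathcal{A}^{-1}x\,dt$ by integrating $t^{-1/\beta}$: for $\beta\ge 1$, in particular $\beta=2$, that integral diverges. Use instead
\[
R(i\lambda)=\int_0^Te^{-i\lambda t}S(t)\,dt+e^{-i\lambda T}R(i\lambda)S(T),\qquad R(i\lambda)S(T)=(i\lambda R(i\lambda)-I)\mathcal{A}^{-1}S(T).
\]
These give $\|R(i\lambda)\|\le MT+(|\lambda|\|R(i\lambda)\|+1)CT^{-1/\beta}$. Choosing $T=(2C|\lambda|)^{\beta}$ lets you absorb the last term, and no integration by parts is needed.
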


\begin{proof}[Proof of the necessity Theorem \ref{thmFourier}-1]
	Suppose that $e^{\mathcal{A}_Ft}$ is exponentially stable. We are going to prove that
	\begin{itemize}
		\item[(1)] $g$ satisfies $\delta$- condition \eqref{delta c};
		\item[(2)] $\chi_0 = 0$.
	\end{itemize}

	\textit{Proof of (1):} We use the ideas in \cite{MR2215885} for the following proof.
	Let
	\[
	z(t) = e^{\mathcal{A}_Ft}(0,0,0,0,0,\eta_0) = (\phi(t), \Phi(t), \psi(t), \Psi(t), \theta(t),\eta^t)
	\]
	be the solution of \eqref{cauchyfc} with initial value $(0,0,0,0,0,\eta_0) $ for some $\eta_0 \in L^2_g$.
	
	Since $ e^{\mathcal{A}_Ft}$ is exponentially stable, there exists two constants $M \geq 1$ and $\gamma > 0$ such that
	\begin{equation}\label{i}
		\|z(t)\|_{\mathcal{H}_F}^2  \leq M e^{-\gamma t} \|\eta_0\|_{L^2_g}^2
	\end{equation}
	for every $t > 0$. Now, by \eqref{eta0}, the definition of $\|\cdot\|_{\mathcal{H}_F}$ and \eqref{i}, we deduce
	\begin{equation}\label{i1}
		\smallint_{t}^{\infty} g(s)\|\eta_{0x}(s-t)\|^2 \, ds \leq 2\|\eta^t\|_{L^2_g}^2 + 2\|\phi_x(t) + \psi(t)\|^2 \leq \frac{2M}{\omega \kappa}e^{-\gamma t} \|\eta_0\|_{L^2_g}^2,
	\end{equation}
	where we have used $\smallint_0^\infty g(s)ds=\ell\in(0,1)$ and $\omega=1-\ell\in(0,1)$.
	
	On the other hand, for each $t > 0$ we define
	\[
	N_t := \left\{ s \in \mathbb{R}^+, \, g(t+s) -   \frac{2M}{\omega \kappa} e^{-\gamma t} g(s) > 0 \right\}.
	\]
	We claim that $|N_t| = 0$, for every $t > 0$, where $|N_t|$ denotes the measure of the set $N_t$. Indeed, suppose by contradiction that there exists $t_0 > 0$ such that $|N_{t_0}| > 0$. Then,
	\begin{equation}\label{i2}
		0 < \smallint_{N_{t_0}} \left[ g(t_0 + s) - \frac{2M}{\omega \kappa}  e^{-\gamma t_0} g(s) \right] \, ds \le\smallint_0^\infty g(s)ds=\ell\in(0,1).
	\end{equation}
	
	However, from \eqref{i1},
	\begin{align*}
		0 &\geq \smallint_{t_0}^{\infty} g(s) \|\eta_{0x}(s-t_0)\|^2 \, ds - \frac{2M}{\omega \kappa}  e^{-\gamma t_0} \smallint_0^{\infty} g(s) \|\eta_{0x}(s)\|^2 \, ds \\
		&= \smallint_0^{\infty} \left[ g(t_0 + s) - \frac{2M}{\omega \kappa} e^{-\gamma t_0} g(s) \right] \|\eta_{0x}(s)\|^2 \, ds.
	\end{align*}
	Now we choose $\eta_0(s) = \chi_{N_{t_0}}(s) \phi^*$ for some $\phi^* \in H_0^1(0, L)$ such that $\|\phi_x^*\| = 1$, where $\chi_{N_{t_0}}(s) =1$ if $s\in N_{t_0}$ and $\chi_{N_{t_0}}(s) =0$ if $s\notin N_{t_0}$. Therefore,
	\begin{align*}
		\smallint_{N_{t_0}} \left[ g(t_0 + s) - \frac{2M}{\omega \kappa} e^{-\gamma t_0} g(s) \right] \, ds &\leq 0.
	\end{align*}
	which contradicts \eqref{i2}. Therefore $g$ satisfies $g(t+s) - \frac{2M}{\omega \kappa} e^{-\gamma t} g(s) > 0$, $\forall t>0$, a.e. $s>0$. So the $\delta$-condition \eqref{delta c} holds.
	
	\textit{Proof of (2):} Suppose by contradiction that $\chi_0 \neq 0$. From Theorem \ref{x1} there there exist three sequences
	\[
	\lambda_n=\frac{\sqrt b\pi^2n^2}{L\sqrt{\rho_2\pi^2n^2+\rho_1L^2}},\quad B_n=\frac{\left(\lambda_n^2\rho_1-\kappa(1-\hat{g}(\lambda_n))\gamma_n\right)(i\lambda_n\rho_3+\beta\gamma_n)-i\lambda_n\sigma^2\gamma_n}{\det\mathbf{M}_n(\lambda_n)},~~~n=1,2,\cdots
	\]
	and
	$$f_n:=\left(0,0,0,-\rho_2^{-1}\sqrt{\frac2 L}\cos\left(\frac{n\pi x}{L}\right),0,0\right)\in\mathcal{H}_F,\quad n=1,2,\cdots$$
	such that
	\[ \frac1n\left\|(i\lambda_n-\mathcal{A}_F)^{-1}f_n\right\|_{\mathcal{H}_F}\geq\sqrt{\rho_2}|B_n|\frac{|\lambda_n|}{n}\to\frac{\rho_2^2|\chi_0|\sqrt b\pi}{\rho_1Lb^2}\hbox{ as } n\to\infty.
	\]
	
	On the other hand, since $ e^{\mathcal{A}_Ft}$ is exponentially stable, we get from Theorem \ref{fz1} that \eqref{fz2_0} holds, which contradicts the above relation. Then, we must have $\chi_0 = 0$.
\end{proof}
\begin{proof}[Proof of the sufficiency  Theorem \ref{thmFourier}-1]	
	Since the $\delta$-condition \eqref{delta c} holds, by Theorem \ref{thmap}, $ i\mathbb{R}\subset \rho(\mathcal{A}_F)$. Since $\chi_0=0$, it follows from Theorem \ref{thmboundresolvent}-1 that
	\[
	\|(i\lambda - \mathcal{A}_F)^{-1}\|_{\mathcal{L}(\mathcal{H}_F)} \leq C \left(\frac{1}{I_g(\lambda)}+\frac{1}{|\lambda|}+1\right)^2
	\]
	for some constant $C>0$ independent of $\lambda$. Since $\tilde{g}\in L^1(0,\infty)$, by \eqref{lim00} in Lemma \ref{hatg} and Lemma \ref{Ig}, we get
	$$\lim_{|\lambda|\to\infty}I_g(\lambda)=\smallint_0^\infty \tilde{g}(s)ds\in(0,\infty).$$
	Then it follows
	\begin{equation}\label{jxc}
		\limsup_{|\lambda|\to\infty}\|(i\lambda - \mathcal{A}_F)^{-1}\|_{\mathcal{L}(\mathcal{H}_F)}\le C\left[\left(\smallint_0^\infty \tilde{g}(s)ds\right)^{-1}+1\right]^2.
	\end{equation}
	Hence, by Theorem \ref{fz1}, the the Fourier-semigroup $\left\{e^{\mathcal{A}_Ft}\right\}_{t\ge0}$ is  exponentially stable.
\end{proof}
\begin{proof}[Proof of Theorem \ref{thmFourier}-2] Regardless of whether $\chi_0=0$, Theorem \ref{thmboundresolvent}-1 establishes the existence of a constant $C> 0$, independent of $\lambda$ , such that
	$$\frac{\|(i\lambda - \mathcal{A}_F)^{-1}\|_{\mathcal{L}(\mathcal{H}_F)}}{|\lambda|^2} \leq C \left(\frac{1}{I_g(\lambda)}+\frac{1}{|\lambda|}+1\right)^2$$
	for any $\lambda\in\mathbb{R}$ such that $|\lambda|\ge1$. Then by the same proof as \eqref{jxc}, we have		
	\[
	\limsup_{|\lambda| \to \infty} \frac{\|(i\lambda - \mathcal{A}_F)^{-1}\|_{\mathcal{L}(\mathcal{H}_F)}}{|\lambda|^2} \leq C \left[ \left( \smallint_0^{\infty} \tilde{g}(s) \, ds \right)^{-1} + 1 \right]^2 < \infty.
	\]
	Since the $\delta$-condition \eqref{delta c} holds, by Theorem \ref{thmap}, $ i\mathbb{R}\subset \rho(\mathcal{A}_F)$.  Hence, applying Theorem \ref{fz1} with $\beta=2$, the desired conclusion follows.
\end{proof}

\begin{proof}[Proof of the necessity Theorem \ref{thmCattaneo}-1]
	Suppose that $e^{\mathcal{A}_Ct}$ is exponentially stable. We are going to prove that
	\begin{itemize}
		\item[(1)] $g$ satisfies $\delta$-condition \eqref{delta c};
		\item[(2)] $\chi_1 = 0$.
	\end{itemize}
	
	\textit{Proof of (1):} The proof of this part is mostly the same as that of (1) in the necessity proof of Theorem \ref{thmFourier}-1, so we omit the details.
	
	\textit{Proof of (2):} Suppose by contradiction that $\chi_1 \neq 0$. From Theorem \ref{x1} there three sequences
	\[
	\lambda_n=\frac{\sqrt b\pi^2n^2}{L\sqrt{\rho_2\pi^2n^2+\rho_1L^2}},\quad B_n=\frac{\left(\lambda_n^2\rho_1-\kappa(1-\hat{g}(\lambda_n))\gamma_n\right)(i\lambda_n\rho_3+\frac{\gamma_n}{i \lambda_n \tau + \beta})-i\lambda_n\sigma^2\gamma_n}{\det\mathbf{U}_n(\lambda_n)},~~~n=1,2,\cdots
	\]
	and
	$$f_n:=\left(0,0,0,-\rho_2^{-1}\sqrt{\frac2 L}\cos\left(\frac{n\pi x}{L}\right),0,0,0\right)\in\mathcal{H}_C,\quad n=1,2,\cdots$$
	such that
	\begin{equation}\label{dv} \frac1n\left\|(i\lambda_n-\mathcal{A}_C)^{-1}f_n\right\|_{\mathcal{H}_C}\geq\sqrt{\rho_2}|B_n|\frac{|\lambda_n|}{n}\to\left\{
		\begin{array}{ll}
			\displaystyle \frac{\pi\sqrt{b\rho_2}\rho_2^2|\chi_1|}{\rho_1Lb^2|\tau b\rho_3-\rho_2|} & \hbox{ if } \tau b\rho_3\neq\rho_2,\\
			\displaystyle \infty, & \hbox{ if }\tau b\rho_3=\rho_2,
		\end{array}
		\right.\hbox{ as } n\to\infty.
	\end{equation}
	
	On the other hand, since $ e^{\mathcal{A}_Ft}$ is exponentially stable, we get from Theorem \ref{fz1} that \eqref{fz2_0} holds.
	\begin{enumerate}
		\item When $\tau b\rho_3=\rho_2$, \eqref{fz2_0} and \eqref{dv} are mutually contradictory.
		\item When $\tau b\rho_3\neq \rho_2$, since $\chi_1\neq0$, \eqref{fz2_0} and \eqref{dv} are also mutually contradictory.
	\end{enumerate}
	Then, we must have $\chi_1 = 0$
\end{proof}
\begin{proof}[Proof of the sufficiency Theorem \ref{thmCattaneo}-1]
	Since the $\delta$-condition \eqref{delta c} holds, by Theorem \ref{thmap}, $ i\mathbb{R}\subset \rho(\mathcal{A}_F)$. Since
	$$\chi_1=\chi_0-\frac{\tau b}{\rho_1\rho_2}\left(\rho_1\rho_3\chi_0+\sigma^2\right)=\frac{\chi_0}{\rho_1}(\rho_2-\tau b\rho_3)-\frac{\tau b\sigma^2}{\rho_1\rho_2}=0,$$
	we must have $\rho_2-b\rho_3\tau\neq0$. Then it follows from Theorem \ref{thmboundresolvent}-3 that
	\[
	\|(i\lambda - \mathcal{A}_C)^{-1}\|_{\mathcal{L}(\mathcal{H}_C)} \leq C \left(\frac{1}{I_g(\lambda)}+\frac{1}{|\lambda|}+1\right)^2.
	\]
	Then by the same proof as \eqref{jxc}, we have
	\[
	\limsup_{|\lambda|\to\infty}\|(i\lambda - \mathcal{A}_C)^{-1}\|_{\mathcal{L}(\mathcal{H}_C)}\le C\left[\left(\smallint_0^\infty \tilde{g}(s)ds\right)^{-1}+1\right]^2.
	\]
	Hence, by Theorem \ref{fz1}, the the Cattaneo-semigroup $\left\{e^{\mathcal{A}_Ct}\right\}_{t\ge0}$ is  exponentially stable.
\end{proof}

\begin{proof}[Proof of Theorem \ref{thmCattaneo}-2]
	Whether $\chi_1=0$ or not, by Theorem \ref{thmboundresolvent}-2, we get there exists $C> 0$ independent of $\lambda$ such that
	$$
	\frac{\|(i\lambda - \mathcal{A}_C)^{-1}\|_{\mathcal{L}(\mathcal{H}_C)}}{|\lambda|^2} \leq C\left(\frac{1}{I_g(\lambda)}+\frac{1}{|\lambda|}+1\right)^2$$
	for any $\lambda\in\mathbb{R}$ such that $|\lambda|\ge1$. Then by the same proof as \eqref{jxc}, we have
	
	\[
	\lim_{|\lambda| \to \infty} \frac{\|(i\lambda - \mathcal{A}_C)^{-1}\|_{\mathcal{L}(\mathcal{H}_C)}}{|\lambda|^2} \leq C \left[ \left( \smallint_0^{\infty} \tilde{g}(s) \, ds \right)^{-1} + 1 \right]^2 < \infty.
	\]
	Since the $\delta$-condition \eqref{delta c} holds, by Theorem \ref{thmap}, $ i\mathbb{R}\subset \rho(\mathcal{A}_F)$.  Hence, applying Theorem \ref{fz1} with $\beta=2$, the desired conclusion follows.
\end{proof}

\end{document}